\newtheorem{thm}{Theorem}
\newtheorem{lem}[thm]{Lemma}
\newtheorem{cor}[thm]{Corollary}
\newtheorem{conj}[thm]{Conjecture}
\newtheorem{clm}[thm]{Claim}
\newtheorem{case}{Case}
\newtheorem{case0}{Case}
\newtheorem{case2}{Case}
\newtheorem{case3}{Case}
\newtheorem{case5}{Case}
\newtheorem{case6}{Case}
\theoremstyle{definition}
\newcommand{\doi}[1]{doi:\,\href{http://dx.doi.org/#1}{#1}}
\newcommand{\arXiv}[1]{arXiv:\,\href{http://arxiv.org/abs/#1}{#1}}
\crefname{case}{Case}{Cases}
\crefname{case0}{Case}{Cases}
\crefname{case1}{Case}{Cases}
\crefname{case2}{Case}{Cases}
\crefname{case3}{Case}{Cases}
\crefname{case4}{Case}{Cases}
\crefname{case5}{Case}{Cases}
\crefname{case6}{Case}{Cases}
\crefname{case7}{Case}{Cases}
\crefname{case8}{Case}{Cases}
\crefname{clm}{Claim}{Claims}
\crefname{lem}{Lemma}{Lemmas}
\crefname{cor}{Corollary}{Corollaries}
\let\c@table\c@figure
\renewcommand{\geq}{\geqslant}
\renewcommand{\leq}{\leqslant}
\renewcommand\section{\@startsection {section}{1}{\z@}%
                                   {-3ex \@plus -1ex \@minus -.2ex}%
                                   {2ex \@plus.2ex}%
                                   {\normalfont\large\bfseries}}
\renewcommand\subsection{\@startsection{subsection}{2}{\z@}%
                                     {-2.5ex\@plus -1ex \@minus -.2ex}%
                                     {1.5ex \@plus .2ex}%
                                     {\normalfont\normalsize\bfseries}}
\renewcommand\subsubsection{\@startsection{subsubsection}{3}{\z@}%
                                     {-2ex\@plus -1ex \@minus -.2ex}%
                                     {1ex \@plus .2ex}%
                                     {\normalfont\normalsize\bfseries}}
 \renewcommand\paragraph{\@startsection{paragraph}{4}{\z@}%
                                    {1.5ex \@plus.5ex \@minus.2ex}%
                                    {-1em}%
                                    {\normalfont\normalsize\bfseries}}
\renewcommand\subparagraph{\@startsection{subparagraph}{5}{\parindent}%
                                       {1.5ex \@plus.5ex \@minus .2ex}%
                                       {-1em}%
                                      {\normalfont\normalsize\bfseries}}
\DeclareMathOperator{\dist}{dist}
\DeclareMathOperator{\exm}{ex_m}
\def \mod#1{{\:({\rm mod}\ #1)}}
  \renewenvironment{thebibliography}[1]{%
    \begin{oldthebibliography}{#1}%
      \setlength{\parskip}{0.25ex}%
      \setlength{\itemsep}{0.25ex}%
  }%
  {%
    \end{oldthebibliography}%
  }
\title{\Large\bf The extremal function for Petersen minors}
\author{Kevin Hendrey\footnote{Research supported by an Australian Postgraduate Award. \texttt{kevin.hendrey@monash.edu}} \qquad 
David R. Wood\footnote{Research supported by the Australian Research Council. \texttt{david.wood@monash.edu}}\\[1ex]
\normalsize School of Mathematical Sciences\\[-0.5ex]
\normalsize Monash University\\[-0.5ex]
\normalsize Melbourne, Australia}
\date{\normalsize August 24, 2015; Revised: \today}
\begin{document}
\maketitle

\begin{abstract}
We prove that every graph with $n$ vertices and at least $5n-8$ edges contains the Petersen graph as a minor, and this bound is best possible. Moreover we characterise all Petersen-minor-free graphs with at least $5n-11$ edges. It follows that every graph containing no Petersen minor is 9-colourable and has vertex arboricity at most 5. These results are also best possible. 
\end{abstract}

\section{Introduction}\label{intro}
A graph $H$ is a \emph{minor} of a graph $G$ if a graph isomorphic to
$H$ can be obtained from $G$ by the following operations: vertex
deletion, edge deletion and edge contraction. The theory of graph
minors, initiated in the seminal work of Robertson and Seymour, is at
the forefront of research in graph theory. A
fundamental question at the intersection of graph minor theory and
extremal graph theory asks, for a given graph $H$, what is the maximum
number $\exm(n,H)$ of edges in an $n$-vertex graph containing no
$H$-minor? The function $\exm(n,H)$ is called the \emph{extremal
function} for $H$-minors.

The extremal function is known for several graphs, including the complete graphs 
$K_4$ and $K_5$ \cite{Wagner,dirac},
$K_6$ and $K_7$ \cite{mader1},
$K_8$ \cite{jorg} and
$K_9$ \cite{song}, the bipartite graphs
$K_{3,3}$ \cite{Hall} and
$K_{2,t}$ \cite{CRS}, and the octahedron
$K_{2,2,2}$ \cite{guoli}, and the complete graph on eight vertices minus an edge $K_8^-$ \cite{song05}.
Tight bounds on the extremal function are known for general complete graphs $K_t$
\cite{vega,Kostochka, Kostochka1,thomason,Thomason01},
unbalanced complete bipartite graphs $K_{s,t}$ \cite{KP08,KP10,KP12,KO05},
disjoint unions of complete graphs \cite{Thomason08},
disjoint unions of cycles \cite{HarveyWood-Cycles,CLNWY15},
general dense graphs \cite{MT-Comb05} and
general sparse graphs \cite{SparseMinor,HarveyWood16}.

\subsection{Petersen Minors}

We study the extremal function when the excluded minor is the Petersen
graph (see \cref{Pet}), denoted by $\mathcal{P}$. Our primary result is the following:

\begin{thm}\label{main}$\exm(n,\mathcal{P})\leq 5n-9$, with equality
if and only if $n\equiv 2 \mod 7$.\end{thm}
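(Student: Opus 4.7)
The plan is to prove the upper bound $\exm(n,\mathcal{P}) \leq 5n-9$ by induction on $n$ applied to a minimum counterexample, and to deduce the equality characterisation from a rigidity analysis of when the inductive inequalities can be tight. Suppose $G$ is an $n$-vertex $\mathcal{P}$-minor-free graph with $e(G) \geq 5n-8$, with $n$ minimal.

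First I would reduce to a well-structured $G$. Any vertex $v$ with $\deg(v) \leq 5$ may be deleted, since $G-v$ has $n-1$ vertices, at least $5(n-1)-8$ edges, and no $\mathcal{P}$-minor, contradicting minimality; hence $\delta(G) \geq 6$. Next, because $\mathcal{P}$ is $3$-connected, a $\mathcal{P}$-minor in $G$ must lie inside one side of any separation of order at most~$2$. If $(G_1,G_2)$ is such a separation with $S := V(G_1) \cap V(G_2)$, then adjoining the missing edges on $S$ to each $G_i$ produces $\mathcal{P}$-minor-free graphs $G_i^+$ on fewer than $n$ vertices (the edge $uv$ in $G_i^+$ is modelled in $G$ by a $u$--$v$ path in the connected graph $G_{3-i}$). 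Applying the inductive bound $e(G_i^+) \leq 5|V(G_i)|-9$ and summing (using $|V(G_1)|+|V(G_2)| = n+|S|$) gives $e(G) \leq 5n-9$ in each case $|S| \in \{0,1,2\}$, contradicting $e(G) \geq 5n-8$. Thus $G$ is $3$-connected.

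The crux is to show that every $3$-connected graph with $\delta \geq 6$ and at least $5n-8$ edges contains a Petersen minor. Following the strategy used by Jørgensen \cite{jorg} and Song \cite{song} for large clique minors, I would look for an edge $uv$ whose contraction $G/uv$ still satisfies the inductive hypothesis (so in particular still has minimum degree at least $6$, noting that a $\mathcal{P}$-minor in $G/uv$ lifts to one in $G$). When no such contractible edge exists, the structure near every edge is heavily constrained, and I would then try to locate a dense highly connected seed subgraph in $G$ (for instance a copy of $K_6$, a $K_{3,3}$-subdivision, or an appropriate subgraph of $K_{4,5}$) and use $3$-connectivity to supply the internally disjoint paths that assemble the ten branch sets of $\mathcal{P}$. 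The routing must respect the Petersen adjacency pattern --- cubic, vertex-transitive, and of girth~$5$ --- so naive clique-minor arguments do not suffice.

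For the equality statement, the construction showing $\exm(7k+2,\mathcal{P}) \geq 5(7k+2) - 9$ is obtained by taking $k$ copies of $K_9$ and identifying a common edge across all of them: the result has $n = 7k+2$ vertices, $1 + 35k = 5n-9$ edges, and being only $2$-connected (for $k \geq 2$) or having too few vertices (for $k \leq 1$), it contains no $\mathcal{P}$-minor. Conversely, for $n \not\equiv 2 \pmod 7$ one revisits the induction under the assumption $e(G) = 5n-9$ and tracks which inequalities in the reduction can be tight; the rigidity forced by equality at each step isolates the $K_9$-clique-sum as the only surviving structure, whose vertex count residue is $2 \pmod 7$. The main obstacle I foresee is the $3$-connected core step: extracting the Petersen graph as a minor from a dense $3$-connected graph is substantially more intricate than extracting a clique minor, and is likely to require a detailed case analysis around the densest subgraphs of $G$ together with linkage lemmas that arrange the branch sets into the cubic, triangle-free pattern of $\mathcal{P}$.
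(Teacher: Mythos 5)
There is a genuine gap: the entire technical core of the theorem is asserted rather than proved. After your (correct, standard) reductions to minimum degree at least $6$ and $3$-connectivity, everything rests on the claim that a $3$-connected graph with $\delta\geq 6$ and at least $5n-8$ edges has a $\mathcal{P}$-minor, and for this you offer only a strategy ("look for a contractible edge\dots locate a dense seed subgraph\dots use $3$-connectivity to supply internally disjoint paths"), explicitly conceding that the routing into the cubic, girth-$5$ pattern of $\mathcal{P}$ is the hard part. That is precisely where essentially all of the paper's work lies: the minimal counterexample is shown to be $4$-connected (not just $3$-connected), every edge is shown to lie in at least five triangles (the base of that argument needs a verification that every $10$-vertex graph with at most six non-edges contains $\mathcal{P}$ as a subgraph, done by computer), vertices of degree $7$ and $8$ are eliminated via exhaustive case tables, and the remaining degree-$6$ and degree-$9$ vertices are handled with the skeleton/table machinery and a minimality argument on ``suitable'' components of $G-N[v]$. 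None of this (nor any substitute for it) appears in your proposal, so the upper bound is not established.

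A secondary but important structural problem: your induction is run at the exact threshold $5n-9$, and your equality analysis is a hope that "rigidity" will isolate the $K_9$-clique-sums. In the separation and contraction steps the edge count can drop by one or two below the threshold while the graph is still (nearly) extremal, so an induction hypothesis that only bounds the edge number does not close; the paper deals with this by proving the stronger Theorem~\ref{extremalgraphs}, characterising all $\mathcal{P}$-minor-free graphs with at least $5n-11$ edges as $(K_9,2)$-cockades minus at most two edges, and then reading off both the bound and the equality case. Relatedly, your justification that the glued-$K_9$ construction is $\mathcal{P}$-minor-free ("being only $2$-connected") is not a reason by itself -- $2$-connected graphs can certainly contain $3$-connected minors; the correct argument is the cockade lemma (each side of a $2$-separation, together with a path through the other side modelling the missing edge, has fewer than $10$ vertices), as in Appendix~\ref{app}.
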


For $n\equiv 2 \mod 7$, we in fact completely characterise the extremal graphs (see Theorem~\ref{extremalgraphs} below).

\bigskip\bigskip

\renewcommand\windowpagestuff{%

\begin{center}
\begin{tikzpicture}[line width=1pt,vertex/.style={circle,inner sep=0pt,minimum size=0.2cm}] 

    \pgfmathsetmacro{\n}{5}; 
    \pgfmathsetmacro{\m}{\n-1};

  \node[draw=black,fill=gray] (V_0) at ($(90-0*360/\n:1.5)$) [vertex] {};
 \node[draw=black,fill=gray] (V_1) at ($(90-1*360/\n:1.5)$) [vertex] {};
 \node[draw=black,fill=gray] (V_2) at ($(90-2*360/\n:1.5)$) [vertex] {}; 
\node[draw=black,fill=gray] (V_3) at ($(90-3*360/\n:1.5)$) [vertex] {};
 \node[draw=black,fill=gray] (V_4) at ($(90-4*360/\n:1.5)$) [vertex] {};
  \node[draw=black,fill=gray] (V_{a0}) at ($(90-0*360/\n:0.75)$) [vertex] {};
 \node[draw=black,fill=gray] (V_{a1}) at ($(90-1*360/\n:0.75)$) [vertex] {};
 \node[draw=black,fill=gray] (V_{a2}) at ($(90-2*360/\n:0.75)$) [vertex] {}; 
\node[draw=black,fill=gray] (V_{a3}) at ($(90-3*360/\n:0.75)$) [vertex] {};
 \node[draw=black,fill=gray] (V_{a4}) at ($(90-4*360/\n:0.75)$) [vertex] {};
    \foreach \x in {0,...,4} {
  
\draw[draw=black] (V_\x) -- (V_{a\x});
    }

\draw[draw=black](V_1)--(V_2);
\draw[draw=black](V_2)--(V_3);
\draw[draw=black](V_3)--(V_4);
\draw[draw=black](V_4)--(V_0);
\draw[draw=black](V_0)--(V_1);
\draw[draw=black](V_{a1})--(V_{a3});
\draw[draw=black](V_{a2})--(V_{a4});
\draw[draw=black](V_{a3})--(V_{a0});
\draw[draw=black](V_{a4})--(V_{a1});
\draw[draw=black](V_{a0})--(V_{a2});
\end{tikzpicture}
\captionof{figure}{}\label{Pet}
\end{center}
}
\opencutright
  \begin{cutout}{2}{0.75\linewidth}{0pt}{9}
 The class of $\mathcal{P}$-minor-free graphs is interesting for
several reasons. As an extension of the 4-colour theorem,  Tutte
\cite{tutte} conjectured that every bridgeless graph with no
$\mathcal{P}$-minor has a nowhere zero 4-flow. Edwards, Robertson, Sanders, Seymour and Thomas
\cite{robseytom2,robseytom3, robseytom4,sanseytom,edsanseytom} have
announced a proof that every bridgeless cubic $\mathcal{P}$-minor-free graph is edge 3-colourable, which is equivalent to Tutte's conjecture in the cubic case. Alspach, Goddyn and Zhang
\cite{alspach} showed that a graph has the circuit cover property if
and only if it has no $\mathcal{P}$-minor. It is recognised that
determining the structure of $\mathcal{P}$-minor-free graphs is a key
open problem in graph minor theory (see  \cite{dinglewch, maharry}
for example). \cref{main} is a step in this direction.
  \end{cutout}

\subsection{Extremal Graphs}
We now present the lower bound in \cref{main}, 
and describe the class of extremal graphs. For a graph $H$ and non-negative integer $t$,
an {\it $(H,t)$-cockade} is defined as follows: $H$ itself is an
$(H,t)$-cockade, and any other graph $G$ is an $(H,t)$-cockade if
there are $(H,t)$-cockades $G_1$ and $G_2$ distinct from $G$ such that $G_1\cup G_2=G$ and $G_1\cap G_2 \cong K_t$. It is
well known that for every $(t+1)$-connected graph $H$ and every non-negative integer $s<|V(H)|$,
every $(K_s,t)$-cockade is $H$-minor-free (see Appendix~\ref{app} for a proof). Since $\mathcal{P}$ is 3-connected and $|V(\mathcal{P})|=10$, every $(K_9,2)$-cockade is $\mathcal{P}$-minor-free. Every $n$-vertex $(K_9,2)$-cockade has $5n-9$ edges. For $n\equiv 2 \mod 7$ there is at least one $n$-vertex $(K_9,2)$-cockade, hence $\exm(n,\mathcal{P})\geq 5n-9$ for $n\equiv 2 \mod 7$.

\cref{main} is implied by the following stronger result, which also shows that $(K_9,2)$-cockades are the unique extremal examples of $\mathcal{P}$-minor-free graphs. Indeed, this theorem characterises $\mathcal{P}$-minor-free graphs that are within two edges of extremal.
\begin{thm}\label{extremalgraphs}Every graph with $n\geq 3$ vertices and $m\geq 5n-11$ edges contains a Petersen minor or is a $(K_9,2)$-cockade minus at most two edges.\end{thm}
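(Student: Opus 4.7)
The strategy is induction on $n$. For the base case $n \le 9$: when $3 \le n \le 8$ the inequality $\binom{n}{2} < 5n - 11$ makes the hypothesis vacuous, while for $n = 9$ the hypothesis $m \ge 34$ together with $\binom{9}{2} = 36$ forces $G$ to be $K_9$ minus at most two edges, and a single $K_9$ is itself a $(K_9,2)$-cockade.

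For the inductive step ($n \ge 10$), assume $G$ is $\mathcal{P}$-minor-free, and dispose first of low-connectivity cases. If $G$ is disconnected or has a cut vertex, summing the inductive edge bound $|E(G')| \le 5|V(G')| - 9$ (valid for $\mathcal{P}$-minor-free graphs on fewer than $n$ vertices) over components or blocks gives $m \le 5n - 13$, contradicting $m \ge 5n - 11$. If $G$ has a $2$-separation $(A, B)$ with $A \cap B = \{u, v\}$, set $G_A := G[A] + uv$ and $G_B := G[B] + uv$. Since $\mathcal{P}$ is 3-connected, both $G_A$ and $G_B$ are $\mathcal{P}$-minor-free. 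A short edge count gives $|E(G_A)| + |E(G_B)| \ge m + 1 \ge 5n - 10$, and combined with $|E(G_X)| \le 5|V(G_X)| - 9$ this forces $|E(G_X)| \ge 5|V(G_X)| - 11$ for each $X \in \{A, B\}$. Induction expresses each $G_X$ as a $(K_9,2)$-cockade minus at most two edges, and since $uv$ lies in each $G_X$ it is a genuine (unremoved) edge of each underlying cockade; gluing along this $K_2$ presents $G$ itself as a $(K_9,2)$-cockade minus at most two edges, the edge counts working out exactly.

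We may therefore assume $G$ is 3-connected. If some vertex $v$ has $d(v) \le 5$, delete $v$: $G - v$ has at least $5(n - 1) - 11$ edges and is $\mathcal{P}$-minor-free, so by induction $G - v$ is a $(K_9, 2)$-cockade minus at most two edges. Since such graphs have minimum degree at least $6$ and $G$ is 3-connected, only a few configurations of $N(v)$ inside $G - v$ are compatible, and each can be resolved directly---either the cockade structure extends over $v$ with at most two missing edges, or disjoint paths through $N(v)$ yield a $\mathcal{P}$-minor of $G$. The principal obstacle is the case $\delta(G) \ge 6$: here simple deletion loses too many edges for induction. The plan is to contract a carefully chosen edge $vu$ incident to a minimum-degree vertex $v$, with $u$ selected so that $|N(u) \cap N(v)|$ is small enough that $G / vu$ retains at least $5(n - 1) - 11$ edges. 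Applying induction to $G / vu$ then yields either a $\mathcal{P}$-minor (which lifts to $G$) or a cockade-minus-two-edges structure on $G/vu$; one must verify that splitting the contracted vertex back into $u$ and $v$ either recovers a cockade-minus-two-edges structure of $G$ or admits an explicit $\mathcal{P}$-minor built from $N(v)$ together with the cockade blocks touching the contracted vertex. This degree-by-degree case analysis, with careful bookkeeping of the two-edge slack at each step, is where the technical heart of the argument lies.
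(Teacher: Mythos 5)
Your base case, connectivity reductions, and the gluing argument for $2$-separations are all fine and broadly parallel to what the paper does (the paper contracts a bridge component to a $K_2$ on $A\cap B$ rather than adding the edge $uv$, but the effect is the same). The genuine gap is in the case you yourself identify as the heart of the matter, $\delta(G)\ge 6$: your plan is to contract an edge $vu$ at a minimum-degree vertex with $|N(u)\cap N(v)|$ ``small enough''. For $G/vu$ to retain at least $5(n-1)-11$ edges you need an edge lying in at most four triangles, and no argument is offered that such an edge exists. In fact it typically does not: in the extremal $(K_9,2)$-cockades every edge lies in at least seven triangles, and the paper shows (its Lemma on triangles) that in a minor-minimal counterexample \emph{every} edge lies in at least five triangles --- precisely because an edge in at most four triangles can be contracted and dispatched (and even that ``easy'' case is not routine: the paper first establishes $4$-connectivity via a rooted-$K_3$-minor argument, concludes that $G/vw$ must be $K_9$ minus at most two edges, and then has to verify that every $10$-vertex graph with at most six non-edges contains $\mathcal{P}$, which is done by computer search). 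So in the hard case neither deletion nor contraction of a single edge preserves the hypothesis $m\ge 5n'-11$, your induction has nothing to apply to, and the ``splitting the contracted vertex back'' analysis never gets off the ground.

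What replaces it in the paper is not bookkeeping but an entirely different mechanism occupying Sections 4--7: a minor-minimal counterexample is shown to be $4$-connected with $\delta\in\{6,\dots,9\}$ and every edge in at least five triangles; vertices of degree $7$ and $8$ are excluded outright by explicit Petersen-minor constructions (using skeletons and tables of a component of $G-N[v]$, plus a lengthy case enumeration of complements of $N(v)$); and for degree $6$ and $9$ vertices one proves the existence and a non-nesting property of ``$v$-suitable'' components, which feeds a final minimality argument (choosing $u\in\mathcal{L}$ and a smallest $u$-suitable component) that yields the contradiction. None of this is recoverable from the deletion/contraction framework you propose, so the proposal as it stands does not prove the theorem; it resolves only the peripheral cases and leaves the central one open.
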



Since $(K_9,2)$-cockades have connectivity 2, it is interesting to ask
for the maximum number of edges in more highly connected $\mathcal{P}$-minor-free graphs. 
First note that \cref{extremalgraphs} implies that 3-connected $\mathcal{P}$-minor-free graphs, 
with the exception of $K_9$, have at most $5n-12$ edges. To see that this is tight, consider the 
class $\mathcal{C}$ of all graphs $G$ such that there is some subset $S$ of the vertices of $G$ such that $|S|\leq 3$ and each component of $G-S$ contains at most five vertices. Then $\mathcal{C}$ is minor-closed, and it is quick to check that $\mathcal{P}$ is not in $\mathcal{C}$. If $G\in \mathcal{C}$ is such that $|S|=3$, every vertex in $S$ is dominant, and every component of $G-S$ is a copy of $K_5$, then $G$ has $5n-12$ edges and is 3-connected, and is $\mathcal{P}$-minor-free.

We now show that there are 5-connected $\mathcal{P}$-minor-free graphs with almost as many edges
as $(K_9,2)$-cockades. Consider the class $\mathcal{C'}$ of all graphs $G$ with a vertex cover of size at most 5. $C'$ is minor-closed, and $\mathcal{P}$ is not in $\mathcal{C'}$. Let $G:=K_5 + \overline{K_{n-5}}$ for $n\geq 6$. Then $G$ is 5-connected with $|E(G)|=5n-15$, and $G$ is in $\mathcal{C'}$ and thus is $\mathcal{P}$-minor-free.


Now consider 6-connected $\mathcal{P}$-minor-free graphs. A graph $G$
is \emph{apex} if $G-v$ is \emph{planar} for some vertex $v$.
Since $K_{3,3}$ is a minor of $\mathcal{P}-v$ for each vertex $v$, the
Petersen graph is not apex and every apex graph is
$\mathcal{P}$-minor-free. A graph $G$ obtained from a 5-connected
planar triangulation by adding one dominant vertex is 6-connected,
$\mathcal{P}$-minor-free, and has $4n-10$ edges. We know of no
infinite families of 6-connected $\mathcal{P}$-minor-free graphs with
more edges. We also know of no infinite families of 7-connected
$\mathcal{P}$-minor-free graphs. Indeed, it is possible that every
sufficiently large 7-connected graph contains a $\mathcal{P}$-minor.
The following conjecture  is even possible.

\begin{conj}
Every sufficiently large 6-connected $\mathcal{P}$-minor-free graph is apex.
\end{conj}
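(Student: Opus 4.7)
The plan is to follow the strategy used by Kawarabayashi, Norine, Thomas and Wollan in their proof of J\o rgensen's conjecture that every sufficiently large 6-connected $K_6$-minor-free graph is apex. The starting point is the Robertson--Seymour structure theorem: for some constant $k=k(\mathcal{P})$, every $\mathcal{P}$-minor-free graph admits a tree decomposition in which each torso is obtained from a graph that is $k$-almost embeddable in a surface of Euler genus at most $k$, by adding at most $k$ apex vertices and at most $k$ vortices of width at most $k$.

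First I would use 6-connectivity to collapse the decomposition to a single piece. One can arrange the decomposition so that its adhesion is at most $3$; since $G$ is 6-connected there is no separator of size at most $5$, so the decomposition must be trivial. Thus $G$ itself is $k$-almost embeddable in some surface $\Sigma$ of Euler genus at most $k$, plus at most $k$ apex vertices and at most $k$ vortices.

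Next I would show that, once $n$ is sufficiently large, the surface $\Sigma$ must be the sphere and the vortices must be trivial. If $\Sigma$ has positive Euler genus, 6-connectivity together with the size of $G$ should allow one to extract a long non-contractible cycle with enough disjoint transversal paths in the embedded part; together with the apex vertices these routes can be assembled, via Menger-type path arguments, into a subdivision of $\mathcal{P}$. A wide vortex is handled in the same spirit, since it already provides a non-planar patch to work with.

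The final and most delicate step, which I expect to be the main obstacle, is bounding the number of apex vertices to at most one. With two apex vertices $u,v$ each adjacent to many vertices of a large 3-connected plane base, one must produce a $\mathcal{P}$-minor. This is awkward because $\mathcal{P}$ is 3-regular of girth five and has no 3-vertex-cut, so a careful choice of ten branch vertices and fifteen internally disjoint connecting paths in the plane base is required, with $u$ and $v$ playing two ``hub'' roles. As in the $K_6$ case of Kawarabayashi--Norine--Thomas--Wollan, this is expected to be the bulk of the work, and may well require its own structural sub-theorems about 6-connected planar graphs with prescribed attachment sets.
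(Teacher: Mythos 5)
This statement is a \emph{conjecture} in the paper, not a theorem: the authors explicitly leave it open, noting only that it is analogous to J{\o}rgensen's conjecture for $K_6$ (proved for large graphs by Kawarabayashi, Norine, Thomas and Wollan). So there is no proof in the paper to compare against, and what you have written is not a proof either --- it is a research programme. You yourself defer the decisive step (ruling out a second apex vertex over a large planar part, i.e.\ actually building a $\mathcal{P}$-minor from two hubs attached to a big $3$-connected plane graph) and say it ``may well require its own structural sub-theorems.'' Until that step is carried out, nothing has been proved; in the $K_6$ case this step occupied two long papers, and for $\mathcal{P}$ (cubic, girth $5$, no small cuts) it is not known how to do it.

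There are also concrete gaps earlier in the outline. The Robertson--Seymour structure theorem for excluding $\mathcal{P}$ gives a tree decomposition of adhesion bounded by some constant depending on $\mathcal{P}$, but you cannot simply ``arrange the decomposition so that its adhesion is at most $3$''; that claim has no justification, and without adhesion strictly below $6$ the appeal to $6$-connectivity does not collapse the decomposition to one piece. Likewise, $6$-connectivity and large order alone do not let you discard the vortices or force the surface to be the sphere by a generic ``long non-contractible cycle plus Menger'' argument: one must actually exhibit a $\mathcal{P}$-minor in each such configuration, which is exactly the kind of case analysis the known $K_6$ proof needed and which has not been done for the Petersen graph. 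In short, the statement remains open, and your proposal, while a reasonable sketch of how one might eventually attack it, contains unjustified reductions and an admittedly missing core argument.
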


This is reminiscent of J{\o}rgensen's conjecture \cite{jorg}, which
asserts that every  6-connected $K_6$-minor-free graph is apex.
J{\o}rgensen's conjecture has recently been proved for sufficiently
large graphs \cite{kawnortomwol,kawnortomwol1}. In this respect, $K_6$ and
$\mathcal{P}$ possibly behave similarly. Indeed, they are both members
of the so-called Petersen family \cite {Sachs1983,robseytom95,lovshri}. Note however, that the extremal functions of
$K_6$ and $\mathcal{P}$ are different, since $\exm(n,K_6)=4n-10$
\cite{mader1}.

\subsection{Graph Colouring}
Graph colouring provides further motivation for studying extremal
functions for graph minors. A graph is \emph{$k$-colourable} if each
vertex can be assigned one of $k$ colours such that adjacent vertices
get distinct colours. The \emph{chromatic number} of a graph $G$ is
the minimum integer $k$ such that $G$ is $k$-colourable. In 1943,
Hadwiger~\cite{Hadwiger43} conjectured that every $K_t$-minor-free
graph is $(t-1)$-colourable. This is widely regarded as one of the
most significant open problems in graph theory; see \cite{SeymourHC}
for a recent survey, and see \cite{RS16,albgon} for recent results. Extremal functions provide a natural approach for
colouring graphs excluding a given minor, as summarised in the
following folklore result (see Appendix~\ref{app} for a proof).

\begin{lem}\label{gencolour}
Let $H$ be a graph such that $\exm(n,H)<cn$ for some positive integer $c$. Then every $H$-minor-free graph is $2c$-colourable, and if
$|V(H)|\leq 2c$ then every $H$-minor-free graph is
$(2c-1)$-colourable.
\end{lem}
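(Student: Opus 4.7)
The plan is to prove both statements by a minimum-counterexample induction on $n$, using the edge bound to locate a low-degree vertex. For the first claim, suppose $G$ is an $H$-minor-free graph on $n$ vertices that is not $2c$-colourable, chosen with $n$ minimum. Since $|E(G)|\le\exm(n,H)<cn$, the average degree is below $2c$, so there is a vertex $v$ with $\deg(v)\le 2c-1$. The graph $G-v$ is $H$-minor-free on fewer vertices, hence $2c$-colourable by minimality, and a colour not appearing among the at most $2c-1$ neighbours of $v$ extends the colouring to $G$, contradicting the choice of $G$.

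For the second claim I would run the same argument aiming for a $(2c-1)$-colouring. The low-degree argument alone is no longer sufficient: it still produces a vertex $v$ with $\deg(v)\le 2c-1$, but when $\deg(v)=2c-1$ and its neighbours in a $(2c-1)$-colouring of $G-v$ all receive distinct colours, no colour is free at $v$. The key move is to force two neighbours of $v$ to share a colour. If some $u_1,u_2\in N(v)$ are non-adjacent in $G$, let $G'$ be obtained from $G-v$ by identifying $u_1$ and $u_2$. Then $G'$ is a minor of $G$: contract $vu_1$, which makes the merged vertex adjacent to $u_2$, then contract the resulting edge. So $G'$ is $H$-minor-free on $n-2$ vertices, hence $(2c-1)$-colourable by minimality; lifting gives a $(2c-1)$-colouring of $G-v$ in which $u_1$ and $u_2$ share a colour, so at most $2c-2$ distinct colours appear on $N(v)$ and the colouring extends to $v$.

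The one remaining case, which is the whole point of the second statement, is when $N(v)$ is a clique. Then $\{v\}\cup N(v)$ induces a $K_{2c}$ in $G$, and this is exactly where the hypothesis $|V(H)|\le 2c$ enters: every graph on at most $2c$ vertices embeds as a subgraph of $K_{2c}$, so $H$ is a subgraph, and in particular a minor, of $G$, contradicting $H$-minor-freeness. The main subtlety in the whole argument is the contraction step producing $G'$; once one is comfortable that identifying two non-adjacent common neighbours of $v$ is realised by the two-step contraction through $v$, everything else is bookkeeping.
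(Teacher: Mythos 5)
Your proposal is correct and follows essentially the same route as the paper: induction (minimum counterexample) using the edge bound to find a vertex of degree at most $2c-1$, and in the $(2c-1)$-colouring case identifying two non-adjacent neighbours of a degree-$(2c-1)$ vertex via contraction through $v$, with the remaining case yielding $K_{2c}\supseteq H$. The only difference is presentational — the paper contracts $vx$ and $vy$ in $G$ and then reads off a colouring of $G-v$, which is the same identification you perform.
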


\cref{main} and \cref{gencolour} with $c=5$ imply the
following Hadwiger-type theorem for $\mathcal{P}$-minors, which is best possible for $\mathcal{P}$-minor-free graphs with $K_9$ subgraphs, for example $(K_9,2)$-cockades.

\begin{thm}\label{colour}
Every $\mathcal{P}$-minor-free graph is 9-colourable.
\end{thm}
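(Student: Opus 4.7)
The plan is to derive \cref{colour} as an immediate corollary of \cref{main} and \cref{gencolour}, as suggested by the remark preceding the statement. Essentially the entire content of the theorem has already been packaged into those two results, and the task is just to check that their hypotheses align with $c=5$ and $H=\mathcal{P}$.

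First I would invoke \cref{main} to obtain the extremal bound $\exm(n,\mathcal{P}) \leq 5n-9 < 5n$ for every positive integer $n$. This confirms the hypothesis $\exm(n,H) < cn$ of \cref{gencolour} with the choice $c=5$. Next I would observe that $|V(\mathcal{P})| = 10 = 2c$, so in particular $|V(\mathcal{P})| \leq 2c$, which is precisely the additional hypothesis needed to activate the stronger conclusion in \cref{gencolour}. Applying \cref{gencolour} then yields that every $\mathcal{P}$-minor-free graph is $(2c-1)$-colourable, that is, $9$-colourable.

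Because both inputs are already theorems from the paper, there is no real obstacle: the only thing that could go wrong is a numerical mismatch, but the numbers line up exactly (five edges per vertex on average, and ten vertices in the Petersen graph, giving the borderline case $|V(\mathcal{P})|=2c$ that trades the easy bound $2c=10$ for the sharper bound $2c-1=9$). I would conclude by noting, as the paper already does, that this bound is tight: a $(K_9,2)$-cockade is $\mathcal{P}$-minor-free by \cref{extremalgraphs} and contains $K_9$, hence has chromatic number at least $9$.
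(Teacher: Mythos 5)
Your proposal is correct and is exactly the paper's own argument: apply \cref{main} to get $\exm(n,\mathcal{P})\leq 5n-9<5n$, then \cref{gencolour} with $c=5$ and $|V(\mathcal{P})|=10\leq 2c$ to conclude $9$-colourability. (A minor aside: the tightness of the bound comes from $(K_9,2)$-cockades being $\mathcal{P}$-minor-free, which the paper derives from the cockade lemma in the appendix rather than from \cref{extremalgraphs}, but this does not affect the proof of the statement itself.)
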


For a given graph $G$, a graph colouring can be thought of as a partition of $V(G)$ such that each part induces an edgeless subgraph, equivalently a subgraph with no $K_2$-minor. One way of generalising this is to instead ask for a partition of $V(G)$ such that each part induces a $K_t$-minor-free subgraph for some larger value of $t$. The minimum integer $k$ such that there exist a partition of $V(G)$ into $k$ sets such that each set induces a $K_3$-minor-free subgraph (equivalently a forest), is called the {\it vertex arboricity} of $G$. A graph is {\it $d$-degenerate} if every subgraph has minimum degree at most $d$. Chartrand and Kronk \cite{charkronk} proved that every $d$-degenerate graph has vertex arboricity at most $\lceil \frac{d+1}{2} \rceil$.
%
By \cref{main} every $\mathcal{P}$-minor-free graph is 9-degenerate. Hence, we have the following result, which again is best possible for $\mathcal{P}$-minor-free graphs with $K_9$ subgraphs.

 \begin{thm}Every $\mathcal{P}$-minor-free graph has vertex arboricity at most 5.
 \end{thm}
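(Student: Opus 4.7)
The plan is to derive the bound as a direct corollary of \cref{main} and the Chartrand--Kronk theorem already cited in the paragraph just above the statement. Since all the heavy lifting is done by \cref{main}, this is essentially a one-line argument that only needs to be unpacked.

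First I would verify that every $\mathcal{P}$-minor-free graph $G$ is $9$-degenerate. The class of $\mathcal{P}$-minor-free graphs is closed under taking subgraphs, so it suffices to show that every $\mathcal{P}$-minor-free graph has a vertex of degree at most $9$. Applying \cref{main} to an arbitrary $\mathcal{P}$-minor-free graph on $n\ge 1$ vertices, the number of edges is at most $5n-9$, so the sum of the degrees is at most $10n-18$, which forces some vertex to have degree strictly less than $10$, i.e.\ at most $9$. Hence every subgraph has minimum degree at most $9$, which is the definition of $9$-degeneracy.

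Second, I would apply the result of Chartrand and Kronk \cite{charkronk} quoted in the paragraph above: every $d$-degenerate graph has vertex arboricity at most $\lceil (d+1)/2 \rceil$. Plugging in $d = 9$ gives vertex arboricity at most $\lceil 10/2 \rceil = 5$, which is exactly the claim. The only minor subtlety to address is the case of very small $n$ (say $n \le 2$), where \cref{main} might be interpreted loosely; but such graphs are trivially forests and have vertex arboricity at most $1$, so they cause no issue.

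There is no real obstacle here — the argument is a routine combination of the extremal edge bound and a standard degeneracy/arboricity lemma. The only part worth double-checking is that the inequality $2(5n-9)/n < 10$ really does certify minimum degree at most $9$ for all relevant $n$, which it does whenever $n \ge 1$.
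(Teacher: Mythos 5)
Your proposal is correct and is exactly the paper's argument: \cref{main} gives the edge bound, hence $9$-degeneracy (since the class is subgraph-closed), and the Chartrand--Kronk theorem with $d=9$ yields vertex arboricity at most $\lceil 10/2\rceil = 5$. The small-$n$ remark is a harmless extra precaution and does not change the argument.
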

 
 Other classes of graphs for which the maximum vertex arboricity is known include planar graphs \cite{charkronk}, locally planar graphs \cite{skrek02}, triangle-free locally planar graphs \cite{skrek02}, for each $k\in \{3,4,5,6,7\}$ the class of planar graphs with no $k$-cycles \cite{raspwang,hsww}, planar graphs of diameter 2 \cite{yangyuan}, $K_5$-minor-free graphs of diameter 2 \cite{huawanyua},  and $K_{4,4}$-minor-free graphs \cite{jorg01}.

\subsection{Notation}

The following notation will be used throughout the paper. Let $G$ be a graph, and let $vw$ be an edge of $G$. The graph $G/vw$ is the graph obtained from $G-\{v,w\}$ by adding a new vertex adjacent to all the neighbours of $v$ except $w$ and all the neighbours of $w$ except $v$. The operation which takes $G$ to $G/e$ is a {\it contraction}. If a graph isomorphic to $H$ can be obtained from $G$ by performing edge deletions, vertex deletions and contractions, then $H$ is a {\it minor} of $G$. A graph $G$ is {\it $H$-minor-free} if $H$ is not a minor of $G$. 

The {\it components} of $G$ are the maximal connected subgraphs of $G$. For $S\subseteq V(G)$, let $G[S]$ be the subgraph of $G$ induced by $S$. If $G[S]$ is a complete graph, $S$ is a {\it clique}. We denote by $G-S$ the graph $G[V(G)\setminus S]$. Similarly, if $S\subseteq E(G)$, let $G-S$ be the graph with vertex set $V(G)$ and edge set $E(G)\setminus S$. For simplicity, we write $G-x$ for $G-\{x\}$. For any subgraph $H$ of $G$, we write $G-H$ for $G-V(H)$. 

For each vertex $v$ in $G$, let $N_G(v):=\{w\in V(G): vw\in E(G)\}$ and $N_G[v]:=\{v\}\cup N_G(v)$. Similarly, for each subgraph $C$ of $G$, let $N_G(C)$ be the set of vertices in $G-C$ that are adjacent in $G$ to some vertex of $C$, and let $N_G[C]:=V(C)\cup N_G(C)$. When there is no ambiguity, we write $N(v)$, $N[v]$, $N(C)$ and  $N[C]$ respectively for $N_G(v)$, $N_G[v]$, $N_G(C)$ and $N_G[C]$. A vertex $v$ is {\it dominant} in $G$ if $N_G[v]=V(G)$, and {\it isolated} if $N_G(v)=\emptyset$. 

We denote by $\delta(G)$ the minimum degree of $G$ and by $\Delta(G)$ the maximum degree of $G$. For $i\in \mathbb{N}$, we denote by $V_i(G)$ the set of vertices in $G$ with degree $i$, and by $V_{\geq i}(G)$ the set of vertices of $G$ of degree at least $i$. 

For a tree $T$ and $v,w\in V(T)$, let $vTw$ be the path in $T$ from $v$ to $w$. A vertex of $T$ is {\it high degree} if it is in $V_{\geq 3}(T)$. For a path $P$ with endpoints $x$ and $y$, int$(P):=xy$ if $E(P)=\{xy\}$ and int$(P):=V(P)\setminus \{x,y\}$ otherwise. 

We denote by $G\dot{\cup} H$ the disjoint union of two graphs $G$ and $H$. A subset $S$ of $V(G)$ is a {\it fragment} if $G[S]$ is connected. Distinct fragments $X$ and $Y$ are {\it adjacent} if some vertex in $X$ is adjacent to some vertex in $Y$.

\section{Outline of Proof}\label{sketch}

 We now sketch the proof of \cref{extremalgraphs}. Assume to the contrary that there is some counterexample to \cref{extremalgraphs}, and select a minor-minimal counterexample $G$. Define $\mathcal{L}$ to be the set of vertices $v$ of $G$ such that $\deg(v)\leq 9$ and there is no vertex $u$ with $N[u]\subsetneq N[v]$. For a vertex $v\in V(G)$, a subgraph $H\subseteq G$ is {\it $v$-suitable} if it is a component of $G-N[v]$ that contains some vertex of $\mathcal{L}$.

\cref{basic} shows some elementary results that are used throughout the other sections. In particular, it shows that $\delta(G)\in\{6,7,8,9\}$, and hence that $\mathcal{L}\neq \emptyset$. \cref{7sec,8sec} respectively show that that no vertex of $G$ has degree 7 and that no vertex of $G$ has degree 8. \cref{6sec,9sec} show that for every $v\in \mathcal{L}$ with degree 6 or 9 respectively there is some $v$-suitable subgraph, and that for each $v\in \mathcal{L}$ with degree 6 or 9 and every $v$-suitable subgraph $C$ of $G$ there is some $v$-suitable subgraph $C'$ of $G$ such that $N(C')\setminus N(C)\neq \emptyset$. 

Pick $u\in \mathcal{L}$ and a $u$-suitable subgraph $H$ of $G$ such that $|V(H)|$ is minimised. By the definition of $u$-suitable, there is some $v\in \mathcal{L}\cap V(H)$. Let $C$ be a $v$-suitable subgraph of $G$ containing $u$, and let $C'$ be a $v$-suitable subgraph of $G$ such that $N(C')\setminus N(C)\neq \emptyset$. \cref{endsec} shows that $C'$ selected in this way is a proper subgraph of $H$, contradicting our choice of $H$. 

The basic idea of our proof is similar to proofs used for example in \cite{song} and \cite{albgon}, with the major points of difference conceptually being the use of skeletons, defined in \cref{basic}, to rule out certain configurations, and the proof in \cref{basic} that the minimal counterexample is 4-connected.

\section{Basic Results}\label{basic}

To prove \cref{extremalgraphs}, suppose for contradiction that $G$ is a minor-minimal counterexample to \cref{extremalgraphs}. That is, $G$ is a graph with the following properties:
\begin{enumerate}[(i)]
\item $|V(G)|\geq 3$,
\item $|E(G)|\geq 5|V(G)|-11$,
\item $G$ is not a spanning subgraph of a $(K_9,2)$-cockade,
\item $\mathcal{P}$ is not a minor of $G$,
\item Every proper minor $H$ of $G$ with at least three vertices satisfies $|E(H)|\leq 5|V(H)|-12$ or is a spanning subgraph of a $(K_9,2)$-cockade.
\end{enumerate}
If $H$ is a $(K_9, 2)$-cockade or $K_2$, then $|E(H)|=5|V(H)|-9$. Hence, (v) immediately implies:
\begin{enumerate}
\item[(vi)] Every proper minor $H$ of $G$ with at least two vertices satisfies $|E(H)|\leq 5|V(H)|-9$.
\end{enumerate}


\begin{lem}\label{minvertices} $G$ has at least 10 vertices.\end{lem}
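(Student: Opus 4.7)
The proof plan is to show that the defining conditions (i)–(iii) are jointly inconsistent for $3\le |V(G)|\le 9$, which forces $|V(G)|\ge 10$. Note that condition (iv) will play no role, because any graph on fewer than ten vertices trivially contains no Petersen minor.

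First I would dispose of the range $3\le n\le 8$ by a direct counting argument: combining (i) and (ii) with the trivial bound $|E(G)|\le\binom{n}{2}$, we would need
\[
\binom{n}{2}\ge 5n-11,\qquad\text{i.e.}\qquad n^{2}-11n+22\ge 0.
\]
The roots of $n^{2}-11n+22$ are $(11\pm\sqrt{33})/2$, which lie strictly between $2$ and $3$, and between $8$ and $9$. Hence for every integer $n$ with $3\le n\le 8$ we have $\binom{n}{2}<5n-11$, contradicting (ii). (One could equally well check the six cases $n=3,\dots,8$ by hand: $5n-11$ takes the values $4,9,14,19,24,29$, each strictly exceeding $\binom{n}{2}\in\{3,6,10,15,21,28\}$.)

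Next I would handle the one remaining case $n=9$. The key observation is that $K_{9}$ is itself a $(K_{9},2)$-cockade (the one-block base case of the recursive definition). Since every graph on $9$ vertices is a spanning subgraph of $K_{9}$, any such $G$ is in particular a spanning subgraph of a $(K_{9},2)$-cockade, contradicting (iii).

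The two cases together eliminate every $n\in\{3,\dots,9\}$, so (i) forces $|V(G)|\ge 10$. I do not expect any real obstacle here: the whole argument is a one-line arithmetic estimate plus the triviality that $K_{9}$ is a $(K_{9},2)$-cockade. The only thing one needs to keep in mind is that condition (iii) is about $G$ being a spanning subgraph of a cockade (not the cockade being a subgraph of $G$), which is what makes the $n=9$ case fall to such a short argument.
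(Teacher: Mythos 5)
Your proof is correct and follows essentially the same route as the paper: rule out $3\le n\le 8$ by the counting bound $\binom{n}{2}<5n-11$, then note that every $9$-vertex graph is a spanning subgraph of $K_9$, which is itself a $(K_9,2)$-cockade, contradicting (iii). No issues.
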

\begin{proof}
Since $5n-11>{n \choose 2}$ for $n\in \{2,3,\dots ,8\}$, every graph satisfying (i) and (ii) has at least 9 vertices. Every 9-vertex graph is a spanning subgraph of a $(K_9,2)$-cockade.
\end{proof}

A {\it separation} of a graph $H$ is a pair $(A,B)$ of subsets of $V(H)$ such that both $A\setminus B$ and $B\setminus A$ are non-empty and $H=H[A]\cup H[B]$. The {\it order} of a separation $(A,B)$ is $|A\cap B|$. A {\it $k$-separation} is a separation of order $k$. A {\it $(\leq k)$-separation} is a separation of order at most $k$. A graph is {\it $k$-connected} if it has at least $k+1$ vertices and no separation of order less than $k$.

Let $x$, $y$ and $z$ be distinct vertices of a graph $H$. A $K_3$-minor \emph{rooted at} $\{x,y,z\}$ is a set of three pairwise-disjoint, pairwise-adjacent fragments $\{X,Y,Z\}$ of $H$ such that $x\in X$, $y\in Y$, $z\in Z$. The following lemma is well known and has been proved, for example, by Wood and Linusson \cite{woodlin}.

\begin{lem}\label{rootedK3lem}Let $x$, $y$ and $z$ be distinct vertices of a graph $H$. There is a $K_3$-minor of $H$ rooted at $\{x,y,z\}$ if and only if there is no vertex $v\in V(H)$ for which the vertices in $\{x,y,z\}\setminus \{v\}$ are in distinct components of $H-v$.\end{lem}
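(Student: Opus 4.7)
The plan is to prove both directions. Necessity (the rooted $K_3$-minor forces the condition) is a direct consequence of the definition: given fragments $X,Y,Z$ with $x\in X$, $y\in Y$, $z\in Z$, any $v\in V(H)$ lies in at most one of $X,Y,Z$, so at least two fragments (say $Y$ and $Z$) are entirely contained in $V(H)\setminus\{v\}$; since $Y$ and $Z$ are each connected and joined by an edge in $H-v$, any two of $\{x,y,z\}\setminus\{v\}$ lying in $Y\cup Z$ are placed in a single component of $H-v$. This always covers at least two of the remaining vertices in $\{x,y,z\}$, ruling out the separator condition.

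For sufficiency I would argue by induction on $|V(H)|$. The base case $|V(H)|=3$ is immediate: applying the hypothesis with $v$ equal to each of $x,y,z$ in turn forces all three edges $xy$, $yz$, $xz$ to be present, so $H$ is a triangle and $(\{x\},\{y\},\{z\})$ is the desired rooted minor. For the inductive step with $|V(H)|\ge 4$, one first observes that the hypothesis, applied with $v\in\{x,y,z\}$, forces $x,y,z$ to lie in a single component of $H$, so after discarding other components we may assume $H$ is connected. The next step is to find a reduction that preserves the hypothesis while strictly decreasing $|V(H)|$: either a vertex $w\notin\{x,y,z\}$ whose deletion preserves the hypothesis, or an edge $e$ with at least one endpoint outside $\{x,y,z\}$ whose contraction does. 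Any rooted $K_3$-minor in the reduced graph lifts to one in $H$.

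The main obstacle is the residual situation in which no reduction preserves the hypothesis. In that case, failure of deletion for every $w\notin\{x,y,z\}$ means there is some $w'$ with $\{w,w'\}$ a 2-separator placing two of $\{x,y,z\}$ on different sides, and failure of contraction restricts $H$ analogously; this heavily constrains the structure. I would exploit this by invoking the paths $P_{xy}\subseteq H-z$, $P_{yz}\subseteq H-x$, and $P_{xz}\subseteq H-y$ guaranteed by the hypothesis. If these three paths can be chosen pairwise internally disjoint, their union is a subdivision of $K_3$ with branch vertices $x,y,z$, and contracting the interior of each path immediately yields the rooted minor. If every such path system has interior intersections, a Menger-style rerouting at a first point of intersection either produces a disjoint path system or exposes a single vertex that separates two of $\{x,y,z\}$, contradicting the hypothesis. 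The careful bookkeeping in this path-intersection/rerouting analysis is the crux of the argument.
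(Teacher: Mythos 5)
The paper never proves this lemma itself (it is quoted as known, with a citation to Wood and Linusson), so the only question is whether your argument stands on its own; it does not. Your necessity direction and the base case are fine, but the sufficiency argument has a genuine gap exactly where you label the crux. The difficulty is that you implicitly replace ``rooted $K_3$-minor at $\{x,y,z\}$'' by the strictly stronger condition ``there exist pairwise internally disjoint paths $P_{xy}\subseteq H-z$, $P_{yz}\subseteq H-x$, $P_{xz}\subseteq H-y$'', i.e.\ a cycle through all of $x,y,z$. The hypothesis of the lemma does not guarantee such a path system. Take $H$ to be a triangle $abc$ with three pendant edges $xa$, $yb$, $zc$: no vertex $v$ separates the other roots as in the statement, and the rooted minor exists (take $X=\{x,a\}$, $Y=\{y,b\}$, $Z=\{z,c\}$), yet $x$ has degree $1$, so no cycle of $H$ passes through $x$ and any choice of $P_{xy}$ and $P_{xz}$ shares the internal vertex $a$. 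Hence your proposed dichotomy --- ``a Menger-style rerouting either produces a disjoint path system or exposes a single vertex separating two of $\{x,y,z\}$'' --- is simply false for graphs satisfying the hypothesis, and the rerouting step cannot be carried out as described.

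One might hope this is saved by the inductive framework, since the counterexample above is reducible (contracting $xa$ preserves the hypothesis). But your proposal never establishes what the irreducible ``residual'' graphs look like: the existence of a hypothesis-preserving vertex deletion or edge contraction is not proved, only its failure is translated into the presence of certain $2$-separators, and that structural information is never used in the path analysis. So the argument is missing its decisive ingredient. To repair it you would either have to work with the correct target structure --- a cycle $C$ together with three vertex-disjoint (possibly trivial) paths joining $x$, $y$, $z$ to $C$, which is what a rooted $K_3$-minor actually amounts to --- or genuinely pin down the irreducible case and show the disjoint-path configuration exists there. As written, the proof is incomplete at the step you yourself identify as the heart of the matter.
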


\begin{lem}\label{connect} $G$ is 4-connected.\end{lem}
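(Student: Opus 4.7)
I would suppose for contradiction that $G$ has a separation $(A,B)$ of order $k$, chosen with $k$ as small as possible and $k\leq 3$. For each $k\in\{0,1,2,3\}$ the strategy is to form proper minors of $G$ on each side of the separation and combine the minor-minimality conditions~(v) and~(vi) with the non-cockade condition~(iii) and the edge hypothesis~(ii) to reach a contradiction.

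For $k\in\{0,1\}$, first note that~(vi) excludes isolated vertices (deleting an isolated vertex preserves the edge count but decreases the number of vertices by one, so $G-v$ would violate $|E(H)|\leq 5|V(H)|-9$). Both sides of the separation are then proper minors of $G$ with at least two vertices, and summing~(vi) over the two sides yields $|E(G)|\leq 5|V(G)|-18$ for $k=0$ and $|E(G)|\leq 5|V(G)|-13$ for $k=1$, each contradicting~(ii).

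For $k=2$ with $A\cap B=\{u,v\}$, the 2-connectivity of $G$ just established supplies $u$-$v$ paths in both $G[A]$ and $G[B]$; contracting one such path on the opposite side to a single edge exhibits $H_A:=G[A]+uv$ and $H_B:=G[B]+uv$ as proper minors of $G$. The central counting identity is $|E(H_A)|+|E(H_B)|=|E(G)|+1+\epsilon$, with $\epsilon=0$ if $uv\in E(G)$ and $\epsilon=1$ otherwise. Now apply~(v) to each side. If both $H_A$ and $H_B$ are spanning subgraphs of $(K_9,2)$-cockades, then, since $uv$ is an edge of each, gluing those cockades along $\{u,v\}$ realises $G$ itself as a spanning subgraph of a $(K_9,2)$-cockade, contradicting~(iii). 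Otherwise at least one $H_X$ obeys $|E(H_X)|\leq 5|V(H_X)|-12$, which combined with $|E(H_{X'})|\leq 5|V(H_{X'})|-9$ forces $|E(G)|\leq 5|V(G)|-12$, contradicting~(ii).

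The main obstacle is the case $k=3$, with $A\cap B=\{u,v,w\}$. I would first verify that $G[A]$ and $G[B]$ each admit a rooted $K_3$-minor at $\{u,v,w\}$: by \cref{rootedK3lem}, the failure of this produces a vertex $x$ whose deletion inside one piece separates two of $\{u,v,w\}$, and a short case analysis (treating $x\in\{u,v,w\}$ and $x\notin\{u,v,w\}$ separately, with the singleton-component corner case handled by moving that singleton across the separator) converts the obstruction into a $(\leq 2)$-separation of $G$, already excluded. Hence $H_A:=G[A]\cup K_3[\{u,v,w\}]$ and $H_B:=G[B]\cup K_3[\{u,v,w\}]$ are proper minors of $G$, and setting $t:=|E(G[\{u,v,w\}])|$ we get $|E(H_A)|+|E(H_B)|=|E(G)|+6-t$. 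Applying~(v) to both sides, the sub-case where both $H_X$ satisfy $|E(H_X)|\leq 5|V(H_X)|-12$ yields $|E(G)|\leq 5|V(G)|-15+t\leq 5|V(G)|-12$, contradicting~(ii). The hard sub-case is when one or both of $H_A,H_B$ are spanning subgraphs of $(K_9,2)$-cockades; here the triangle $\{u,v,w\}$ is forced to lie inside a single $K_9$-block (so $|A|$ or $|B|$ is $\equiv 2\pmod 7$, and the local structure of $G$ near the separator is very rigid). I would exploit this rigidity by introducing a further auxiliary minor---for example contracting a suitably chosen vertex of $A\setminus B$ or $B\setminus A$ into the separator, or contracting an edge of $\{u,v,w\}$---to reduce the configuration to the already-handled $k=2$ case, thereby triggering either an edge-count contradiction with~(ii) or a cockade-realisation contradiction with~(iii).
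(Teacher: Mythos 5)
Your treatment of orders $0$, $1$ and $2$ is sound and is essentially the paper's argument (the $K_2$-rooted contractions, the gluing of the two cockades along $uv$ against (iii), and the $5|V(G)|-12$ count), and your use of \cref{rootedK3lem} to get rooted $K_3$-minors in the order-$3$ case also matches the paper. The genuine gap is in how you propose to close the order-$3$ case when one of $H_A,H_B$ is a spanning subgraph of a $(K_9,2)$-cockade: you never invoke condition (iv), and no combination of (ii), (iii), (v), (vi) and reductions to the $k=2$ case can finish here. Concretely, $K_9$ together with one extra vertex joined to three vertices of the $K_9$ has $n=10$ and $5n-11$ edges, is not a spanning subgraph of any $(K_9,2)$-cockade (none exists on $10$ vertices), and has a $3$-separation; the only reason such a configuration cannot occur in $G$ is that it contains a Petersen minor. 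This is exactly how the paper finishes: since $A\cap B$ is a triangle of the cockade it lies in a single block, so there is a nine-vertex set $S\supseteq A\cap B$ with $G_A[S]$ equal to $K_9$ minus at most two edges; taking a component $C$ of $G-A$ with $N(C)=A\cap B$ and contracting $C$ to a single vertex yields a ten-vertex graph containing $\mathcal{P}$ as a subgraph, contradicting (iv). Your alternative of ``contracting an edge of $\{u,v,w\}$ or a vertex into the separator to reduce to the already-handled $k=2$ case'' does not work as stated: the $k=2$ analysis (and conditions (ii), (iii)) applies to $G$ itself, not to a proper minor of $G$, and applying (v) to such a minor merely returns the near-cockade alternative you are trying to exclude.

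A second, smaller instance of the same omission: your rooted-$K_3$ step needs a vertex of $A\setminus B$ other than the obstructing vertex $x$, which fails precisely when $A\setminus B=\{s\}$ and $x=s$; ``moving the singleton across the separator'' does not produce a separation of $G$. The paper handles this singleton corner separately (its \cref{onevertexcase}): there $\deg(s)\le 3$, the count forces equality in (ii), so $G[B]$ is an actual cockade and (vi) forces $N(s)$ to be a clique, hence contained in a $K_9$ block, and again one must exhibit $\mathcal{P}\subseteq K_9$ plus a vertex joined to a triangle, contradicting (iv). So both delicate points of the lemma require the Petersen-subgraph constructions, which are absent from your plan.
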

\begin{proof}
By \cref{minvertices}, $|V(G)|\geq 10$. Suppose for contradiction that there is a $(\leq 3)$-separation $(A,B)$ of $G$. Note that $A\setminus B$ and $B\setminus A$ are both non-empty by definition. We separate into cases based on $|A\cap B|$ and on whether $|A\setminus B|$ is a singleton. Note that while \cref{onevertexcase} is redundant, it is useful to know that \cref{onevertexcase} does not hold when proving that \cref{order1or0case,order3case} do not hold.

\begin{case0}\label{onevertexcase}There is a $(\leq 3)$-separation $(A,B)$ of $G$ such that $|A\setminus B|=\{v\}$\textup{:}\end{case0}
 By \cref{minvertices}, $|B|\geq 9$. Now by (vi) we have
$$|E(G)|\leq |E(G[B])|+\deg(v)\leq 5(|V(G)|-1)-9+3=5|V(G)|-11.$$
By (ii), equality holds throughout. In particular $\deg(v)= 3$ and $|E(G[B])|=5|B|-9$ so $G[B]$ is a $(K_9,2)$-cockade  by (v). For every edge $e$ incident to $v$, we have $E(G/e)=E(G[B])$ by (vi). Hence, $|A\cap B|$ is a clique, and is therefore contained in a subgraph $H\cong K_9$ of $G[B]$. Then $\mathcal{P}\subseteq H\cup G[A]\subseteq G$ contradicting (iv).

\begin{case0}\label{order1or0case} There is a $(\leq 1)$-separation $(A,B)$ of $G$\textup{:}\end{case0}
If either $|A\setminus B|=1$ or $|B\setminus A|=1$ then we are in \cref{onevertexcase}. Otherwise, $|A|\geq 2$ and $|B|\geq 2$, so by (v) we have $|E(G[A])|\leq 5|A|-9$, with equality if and only if $G[A]\cong K_2$ or $G[A]$ is a $(K_9,2)$-cockade, and the same for $B$. Now
$$|E(G)|=|E(G[A])|+|E(G[B])|\leq 5(|V(G)|+1)-9-9=5|V(G)|-13,$$
contradicting (ii).

\begin{case0}\label{order2case}There is a 2-separation $(A,B)$ of $G$\textup{:}\end{case0}
If there is a component $C$ of $G-(A\cap B)$ such that $N(C)\neq A\cap B$, then $G$ has a $(\leq 1)$-separation, and we are in \cref{order1or0case}. Otherwise, let $C_B$ be a component of $G-A$ and let $G_A$ be the graph obtained from $G$ by contracting $G[N[C_B]]$ down to a copy of $K_2$ rooted at $A\cap B$ and deleting all other vertices of $B$. Let $G_B$ be defined analogously. If $|E(G_A)|\leq 5|A|-12$, then 
$$|E(G)|\leq |E(G_A)|+|E(G_B)|-1\leq 5(|V(G)|+2)-12-9-1=5|V(G)|-12,$$
contradicting (ii). Hence, $|E(G_A)|\geq 5|A|-11$, and by (v), $G_A$ is a spanning subgraph of a $(K_9,2)$-cockade $H_A$. By symmetry, $G_B$ is a spanning subgraph of a $(K_9,2)$-cockade $H_B$. Then $G$ is a spanning subgraph of the $(K_9,2)$-cockade formed by gluing $H_A$ and $H_B$ together on $A\cap B$, contradicting (iii).

\begin{case0}\label{order3case}There is a $3$-separation $(A,B)$ of $G$\textup{:}\end{case0}
First, suppose that $G[A]$ does not contain a $K_3$ minor rooted at $A\cap B$. Then there exists a vertex $v$ such that the vertices in $A\cap B$ are in distinct components of $G[A]-v$ by \cref{rootedK3lem}. Recall that $|A\setminus B|>1$, so there is a vertex $w\neq v$ in $A\setminus B$. Let $C$ be the component of $G[A]-v$ containing $w$. Then there is a $(\leq 2)$-separation $(A',B')$ of $G$ where $A'\setminus B'=V(C)\setminus (A\cap B)$, so we are in either \cref{order1or0case} or \cref{order2case}. Hence, there is a $K_3$ minor of  $G[A]$ rooted at $A\cap B$, and by the same argument a $K_3$ minor of $G[B]$ rooted at $A\cap B$. Let $G_A$ be obtained from $G$ by contracting $G[B]$ down to a triangle on $A\cap B$, and let $G_B$ be obtained from $G$ by contracting $G[A]$ down to a triangle on $A\cap B$. Suppose $|E(G_A)|\geq 5|A|-11$. Since $G$ satisfies (v), we have that $G_A$ is a spanning subgraph of a $(K_9,2)$-cockade, and so $G_A$ is a $(K_9,2)$-cockade minus at most two edges. Since $A\cap B$ is a clique of $G_A$, there is some set $S$ of nine vertices in $A$, containing $A\cap B$, such that $G_A[S]$ is $K_9$ minus at most two edges. Let $C$ be a component of $G-A$, and note that $N(C)=A\cap B$, or else we are in \cref{order1or0case} or \cref{order2case}. Now it is quick to check that the graph obtained from $G[S\cup V(C)]$ by contracting $C$ to a single vertex contains $\mathcal{P}$ as a subgraph, contradicting (iv). Hence, $|E(G_A)|\leq 5|A|-12$, and by symmetry $|E(G_B)|\leq 5|B|-12$. Now 
$$|E(G)|\leq |E(G_A)|+|E(G_B)|-3\leq 5(|V(G)|+3)-12-12-3=5|V(G)|-12,$$
contradicting (ii).
\end{proof}

\begin{lem}\label{triangles} $\delta(G) \in \{6,7,8,9\}$ and every edge is in at least five triangles.\end{lem}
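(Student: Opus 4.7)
The plan is to establish the three claims separately. The upper bound $\delta(G) \le 9$ follows from averaging: applying (vi) to $G - v$ for a vertex $v$ of minimum degree gives $|E(G)| \le 5|V(G)| - 14 + \delta(G)$, which combined with the handshake bound $|E(G)| \ge \delta(G)\,|V(G)|/2$ and $|V(G)| \ge 10$ (Lemma \ref{minvertices}) forces $\delta(G) \le 9$.

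To prove $\delta(G) \ge 6$ and the triangle claim, I would pass to the proper minor $G'$ obtained either by deleting a low-degree vertex $v$ (assuming $\deg(v) \le 5$) or by contracting an edge $vw$ (assuming $|N(v) \cap N(w)| \le 4$). In both cases $|E(G')| \ge 5|V(G')| - 11$, so property (v) forces $G'$ to be a spanning subgraph of some $(K_9,2)$-cockade $C$ differing from $C$ by at most two edges. I would then use the 4-connectivity of $G$ (Lemma \ref{connect}) to rule out $C$ having more than one block. In the deletion case, any pair $\{a,b\}$ of gluing vertices is a 2-cut of $C$, and removing at most two edges from $C$ cannot reconnect the separated parts of size $\ge 7$; thus $\{a,b\}$ is a cut of $G-v$, making $\{a,b,v\}$ a 3-cut of $G$, contradicting Lemma \ref{connect}. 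In the contraction case, $G/vw$ is 3-connected because any 2-cut $\{a,b\}$ of $G/vw$ lifts to either a 2-cut of $G$ (if the merged vertex lies outside $\{a,b\}$) or a 3-cut $\{v,w,b\}$ of $G$ (otherwise); hence $C$ cannot have a 2-cut. Thus in both cases $C = K_9$ and $|V(G)| = 10$.

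This reduces to a 10-vertex endgame: $G$ is a 4-connected graph on 10 vertices with $|E(G)| \ge 39$, and either $G-v$ or $G/vw$ equals $K_9$ minus at most two edges. Since $|V(G)| = |V(\mathcal{P})|$, any Petersen minor of $G$ is in fact a Petersen subgraph, which is equivalent to an embedding of the complement $\overline{G}$ into $\overline{\mathcal{P}} = L(K_5)$. Here $\overline{G}$ has at most 6 edges and maximum degree at most 5 (using $\delta(G) \ge 4$), and the main obstacle is a finite case analysis verifying that every such $\overline{G}$ with the structure inherited from the cockade case embeds into the 6-regular vertex-transitive graph $L(K_5)$; this produces a Petersen subgraph of $G$, contradicting (iv). The remaining bookkeeping with minor-minimality and the persistence of cockade 2-cuts under bounded edge deletion is routine.
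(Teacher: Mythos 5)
Your proposal is correct and follows essentially the same route as the paper: contract an edge lying in at most four triangles (or delete a vertex of degree at most five), invoke minimality property (v) together with 4-connectivity to force the resulting graph to be $K_9$ minus at most two edges, and reduce to the finite claim that every 10-vertex graph with at most six non-edges contains a Petersen subgraph -- the very claim the paper settles by computer search rather than by the case analysis you defer. The small variations (deleting a minimum-degree vertex instead of an edge for $\delta(G)\le 9$, and phrasing the endgame as embedding $\overline{G}$ into $L(K_5)=\overline{\mathcal{P}}$) do not change the substance.
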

\begin{proof}
Suppose for contradiction that some edge $vw$ is in $t$ triangles with $t\leq 4$. Now $$|E(G/vw)|\geq |E(G)|-t-1\geq  5|V(G)|-12-t\geq 5|V(G/e)|-11.$$
Since $G$ satisfies (v), $G/vw$ is a spanning subgraph of some $(K_9,2)$-cockade $H$. By \cref{connect}, $G$ is 4-connected, which implies $G/vw$ is 3-connected,  so $G/vw$ is $K_9$ minus at most two edges. It follows from (ii) that $G$ is a 10-vertex graph with at most six non-edges. It is possible at this point to manually prove that $\mathcal{P}\subseteq G$. 
Rather than detailing this argument, we instead report that a simple random searching algorithm verifies (in six minutes) that $\mathcal{P}$ is a subgraph of every 10-vertex graph with at most six non-edges. Hence, every edge of $G$ is in at least five triangles. By \cref{connect}, $G$ has no isolated vertex,  and $\delta(G)\geq 6$. 

Let $e$ be an edge of $G$. By (vi), $|E(G-e)|\leq 5|V(G)|-9$, so $|E(G)|\leq 5|V(G)|-8$, and hence $\delta(G)\leq 9$.
\end{proof}

Recall that $\mathcal{L}$ is the set of vertices $v$ of $G$ such that $\deg(v)\leq 9$ and there is no vertex $u$ with $N[u]\subsetneq N[v]$.
By \cref{triangles}, every vertex of minimum degree is in $\mathcal{L}$, and $\mathcal{L}\neq\emptyset$. 


The following result is the tool we use for finding $v$-suitable subgraphs. 


\begin{lem}\label{separations} If $(A,B)$ is a separation of $G$ of order $k\leq 6$ such that there is a vertex $v\in B\setminus A$ with $A\cap B\subseteq N(v)$, then there is some vertex $u\in (A\setminus B) \cap \mathcal{L}$.\end{lem}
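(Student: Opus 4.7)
My plan is to proceed by contradiction: assume $(A\setminus B)\cap\mathcal{L}=\emptyset$, and among all separations $(A,B)$ with a vertex $v$ satisfying the hypotheses for which this fails, choose one minimising $|A|$. By \cref{connect}, $k\in\{4,5,6\}$. I will first make two WLOG reductions from the minimality of $|A|$: if $G[A\setminus B]$ had a proper component $C$, then $(A\cap B\cup V(C),\,V(G)\setminus V(C))$ would be a separation of order $k$ with the same dominating vertex $v$ and strictly smaller first side, so $G[A\setminus B]$ must be connected; and if some $w\in A\cap B$ had no neighbour in $A\setminus B$, then $(A\setminus\{w\},B)$ would be a separation of order $k-1\leq 5$ with $v$ still dominating and smaller first side, so every $w\in A\cap B$ has a neighbour in $A\setminus B$.

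The main step is to pick $u\in A\setminus B$ with $\deg_G(u)$ minimum and show $u\in\mathcal{L}$. The absence-of-dominator part is quick: any $u'$ with $N[u']\subsetneq N[u]$ has $u'\in N(u)\subseteq A$ (since all of $u$'s neighbours lie in $A$), and the two cases $u'\in A\setminus B$ and $u'\in A\cap B$ both yield contradictions---the first because $\deg(u')<\deg(u)$ violates the minimality of $\deg(u)$, and the second because $N[u']\subseteq N[u]\subseteq A$ would force $u'$ to have no neighbour in $B\setminus A$, yet $u'\in A\cap B\subseteq N(v)$ with $v\in B\setminus A$.

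The hard part will be showing $\deg(u)\leq 9$. If this fails, every vertex of $A\setminus B$ has degree at least $10$ with all its neighbours in $A$, so $|A|\geq 11$. I then apply (vi) to the proper minor $G[A\cup\{v\}]$ (the corner case $|B\setminus A|=1$, which forces $k=6$ and leaves $G[A]$ itself as a proper minor, is handled separately). This gives $|E(G[A])|\leq 5|A|-4-k$, and combined with the lower bound $\sum_{u'\in A\setminus B}\deg_{G[A]}(u')\geq 10(|A|-k)$ it forces $G[A\setminus B]$ to be nearly complete and each vertex of $A\setminus B$ to be joined to essentially all of $A\cap B$. Using this density together with $v$'s full adjacency to $A\cap B$, I then contract an edge $vw$ with $w\in A\cap B$ and an edge $u_0 w_1$ with $u_0\in A\setminus B$, $w_1\in A\cap B$ inside $G[A\cup\{v\}]$, producing a 10-vertex minor isomorphic to $K_{10}$ minus a set of edges all contained in an independent set of size at most $k-2\leq 4$. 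Since $\alpha(\mathcal{P})=4$, the Petersen graph embeds as a subgraph of this minor, yielding a Petersen minor of $G$ and contradicting (iv).

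The principal obstacle will be the final degree-bound step. For the tight situation $|A|=11$ the structural forcing is immediate: $G[A\setminus B]$ is a clique and $G[A]$ contains a complete bipartite join between $A\setminus B$ and $A\cap B$, so the two contractions produce exactly $K_{10}$ minus the edges inside a $(k-2)$-element subset of $A\cap B$. For $|A|>11$ one must additionally extract an 11-vertex tight substructure of $A$ on which the argument goes through; this is the delicate combinatorial point, and the identity $\alpha(\mathcal{P})=4$ gives precisely the slack needed to absorb the (up to) $\binom{k-2}{2}\leq 6$ possible missing edges.
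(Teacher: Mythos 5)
Your setup (reducing to the case where every vertex of $A\cap B$ has a neighbour in $A\setminus B$, taking $u\in A\setminus B$ of minimum degree, and the argument that no $u'$ satisfies $N[u']\subsetneq N[u]$) matches the paper and is fine. The gap is in the step $\deg(u)\leq 9$, which is the heart of the lemma. From (vi) applied to $G[A\cup\{v\}]$ you get $|E(G[A])|\leq 5|A|-4-k$, and together with $\deg_{G[A]}(u')\geq 10$ for all $u'\in A\setminus B$ this only yields $\sum_{w\in A\cap B}\deg_{G[A]}(w)\leq 8k-8$; it does \emph{not} force $G[A\setminus B]$ to be nearly complete nor each vertex of $A\setminus B$ to be joined to almost all of $A\cap B$. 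That forcing happens only in the extremal case $|A|=11$, where degree $10$ inside an $11$-vertex set means ``adjacent to everything''. For $|A|>11$ the inequalities are compatible with $G[A\setminus B]$ being, say, a sparse $10$-regular-ish graph with only a handful of edges to $A\cap B$, so there is no dense $10$-vertex minor to be found by your two contractions, and the ``extract an $11$-vertex tight substructure'' step that you defer as a delicate point is precisely the unproven crux; nothing in your stated estimates produces such a substructure. (Your use of $\alpha(\mathcal{P})=4$ to absorb the $\binom{k-2}{2}$ missing edges is correct as far as it goes, but it only applies once the near-complete $12$-vertex configuration exists.)

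The paper closes this case by a different mechanism, not by exhibiting a Petersen minor. It first observes that $G[A]$ has at most $k\leq 6$ vertices of degree less than $10$, hence is \emph{not} a spanning subgraph of a $(K_9,2)$-cockade, so (v) gives the stronger bound $|E(G[A])|\leq 5|A|-12$ and thus $\sum_{w\in A\cap B}\deg_{G[A]}(w)\leq 10k-24$. Using \cref{connect} and \cref{triangles} it finds at least $7$ edges between $A\cap B$ and $A\setminus B$, deduces that some $x\in A\cap B$ has at most $4$ neighbours in $A\cap B$, and then shows $G':=G[A\cup\{v\}]/vx$ has at least $5|A|-11$ edges; by (v), $G'$ and hence its spanning subgraph $G[A]$ would be a spanning subgraph of a $(K_9,2)$-cockade, contradicting the first observation. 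This counting/cockade contradiction is what handles all $|A|\geq 11$ uniformly, and some argument of this kind (or a genuinely new idea for extracting a dense substructure) is what your proposal is missing.
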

\begin{proof}
We may assume that every vertex in $A\cap B$ has a neighbour in $A\setminus B$.

Let $u$ be a vertex in $A\setminus B$ with minimum degree in $G$. Suppose for a contradiction that $\deg_G(u)\geq 10$. It follows that every vertex in $A\setminus B$ has degree at least 10 in $G[A]$. Hence, $G[A]$ has at most six vertices of degree less than 10, so $G[A]$ is not a spanning subgraph of a $(K_9,2)$-cockade. Now $|A|\geq |N[u]|\geq 11$, so by (v), 
\begin{equation}\label{edgedeficiteq}\sum_{w\in A\cap B}\deg_{G[A]}(w)=2|E(G[A])|-\sum_{w\in A\setminus B}\deg_{G[A]}(w)\leq  2(5|A|-12)-10|A\setminus B|=10k-24.\end{equation}
Let $X$ be the set of edges of $G$ with one endpoint in $A\cap B$ and the other endpoint in $A\setminus B$. It follows from \cref{connect} that there are a pair of disjoint edges $e_1$ and $e_2$ in $X$, since deleting the endpoints of an edge $e_1\in X$ from $G$ does not leave a disconnected graph and $|A\setminus B|\geq |N[u]|-k\geq 5$. By \cref{triangles}, $e_1$ is in at least five triangles. Each of these triangles contains some edge in $X\setminus \{e_1,e_2\}$, so $|X|\geq 7$. By (\ref{edgedeficiteq}),
$$\delta(G[A\cap B])\leq \frac{1}{k}\sum_{w\in A\cap B}\deg_{G[A\cap B]}(w)=\frac{1}{k}\left(\left(\sum_{w\in A\cap B}\deg_{G[A]}(w)\right)-|X|\right)\leq \frac{1}{k}(10k-31).$$
Since $k\leq 6$, some vertex $x\in A\cap B$ has degree at most 4 in $G[A\cap B]$. Let $G':=G[A\cup \{v\}]/vx$. Then $|E(G')|\geq |E(G[A])|+(k-5)$. Recall that every vertex in $A\setminus B$ has degree at least 10 in $G[A]$. Further, every vertex in $A\cap B$ is incident with some edge in $X$, and hence has at least six neighbours in $A$ by \cref{triangles}. Hence $|E(G')|\geq \frac{1}{2}(10|A\setminus B|+6k)+(k-5)\geq \frac{1}{2}(10|A|-4k)+k-5\geq 5|A|-11$. Then $G'$ is a spanning subgraph of a $(K_9,2)$-cockade by (v), and so $G[A]$ is a spanning subgraph of a $(K_9,2)$-cockade, a contradiction.

Hence, $\deg_G(u)\leq 9$. Suppose for contradiction that $N[w]\subsetneq N[u]$ for some vertex $w$. Then $w\in N(u)$ and $\deg_G(w)<\deg_G(u)$, so $w\in A\cap B$. But $N[w]\subseteq N[u]$, so $w\notin N(v)$, which contradicts the assumption that $A\cap B\subseteq N(v)$. Therefore $u\in \mathcal{L}$, as required.
\end{proof}

For an induced subgraph $H$ of $G$, a subtree $T$ of $G[N[H]]$ is a {\it skeleton} of $H$ if $V_1(T)=N(H)$.

\begin{lem}\label{path}Let $S$ be a fragment of $G$, let $T$ be a skeleton of $G[S]$, and let $v$ and $w$ be distinct vertices of $T$. If $vw\notin E(T)$ and $T\neq vTw$, then there is a path $P$ of $G[N[S]]-\{v,w\}$ from $vTw$ to $T-vTw$ with no internal vertex in $T$.\end{lem}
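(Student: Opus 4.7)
The plan is to translate the existence of $P$ into a connectivity statement about a suitable auxiliary graph and then derive a contradiction from $G$ being $4$-connected (\cref{connect}). Set $A := V(vTw) \setminus \{v, w\}$ and $B := V(T) \setminus V(vTw)$, both non-empty by the hypotheses $vw \notin E(T)$ and $T \neq vTw$. Let $H := G[N[S]] - \{v, w\}$. I claim the required $P$ exists if and only if some vertex of $A$ and some vertex of $B$ lie in a common component of $H$. The ``only if'' direction is immediate, and for ``if'' I would take a shortest $A$-to-$B$ path in $H$: any interior vertex in $A \cup B = V(T) \setminus \{v, w\}$ would give a strictly shorter $A$-to-$B$ subpath, so the interior avoids $V(T) \setminus \{v, w\}$, and it avoids $\{v, w\}$ automatically since the path lies in $H$; hence the interior misses $V(T)$ entirely.

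Next I would observe that $A$ itself lies in a single component $X$ of $H$, because the vertices of $A$ form a subpath of $T - \{v, w\} \subseteq H$. Suppose for contradiction that $B \cap X = \emptyset$. The key step is to show $X \subseteq S$. If $\ell \in X \cap N(S)$, then $\ell$ is a leaf of $T$ (since $V_1(T) = N(S)$), so $\ell \in V(T) = V(vTw) \cup B$. But $\ell \in A$ would force $\deg_T(\ell) \geq 2$, $\ell \in \{v,w\}$ would contradict $\ell \in V(H)$, and $\ell \in B$ would contradict the assumption; all cases are impossible, so $X \subseteq N[S] \setminus N(S) = S$.

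With $X \subseteq S$ in hand, every $G$-neighbour of $X$ lies in $S \cup N(S) = N[S]$, and since $X$ is a component of $G[N[S]] - \{v, w\}$, every $G$-edge leaving $X$ has its other endpoint in $\{v, w\}$. Therefore $(X \cup \{v, w\}, V(G) \setminus X)$ is a separation of $G$ of order $2$, with $A \subseteq X$ on one side and $B \subseteq V(G) \setminus (X \cup \{v,w\})$ on the other, so both sides are non-empty. This contradicts $G$ being $4$-connected (\cref{connect}), completing the argument.

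The only genuinely delicate step I anticipate is the claim $X \subseteq S$; the skeleton property $V_1(T) = N(S)$ is used in an essential way, because without it a vertex of $X \cap N(S)$ could be adjacent in $G$ to a vertex of $V(G) \setminus N[S]$, which would spoil the proposed $2$-separation of $G$.
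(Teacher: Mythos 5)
Your proof is correct. Where the paper argues directly---it takes a shortest path from $vTw$ to $T - vTw$ in $G - \{v,w\}$ (which exists because $G - \{v,w\}$ is connected by \cref{connect}), notes that minimality forces the interior to avoid $T$, and then observes that a vertex outside $N[S]$ would force the path through $N(S) \subseteq V(T)$---you argue the contrapositive: if no such path exists, then the component $X$ of $G[N[S]] - \{v,w\}$ containing the interior of $vTw$ misses $T - vTw$, so $X \cap N(S) = \emptyset$ (since any $\ell \in X \cap N(S)$ would be a leaf of $T$ and hence in $\{v,w\}$ or on the missing side), whence $X \subseteq S$ and $\{v,w\}$ is a $2$-cut separating $X$ from $B$, contradicting \cref{connect}. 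Both arguments hinge on the same two facts, namely that $N(S) = V_1(T) \subseteq V(T)$ forms a wall around $S$, and that $G$ has no $2$-separation; the paper packages this as ``a minimal path stays inside $N[S]$,'' while you package it as ``a failed path yields a $2$-separator.'' Your version is slightly longer but makes the role of the skeleton condition and of $4$-connectivity completely explicit, which you correctly flag as the delicate point; the paper's version is more economical because it never needs to name the separation, instead just observing that crossing $N(S)$ would land an interior vertex of $P$ in $T$.
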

\begin{proof}
$G-\{v,w\}$ is connected by \cref{connect}, so there is a path in $G-\{v,w\}$ from  $vTw$ to $T-vTw$. Let $P$ be a vertex-minimal example of such a path with endpoints $x$ in $vTw$ and $y$ in $T-vTw$. 

Suppose to the contrary that there is some internal vertex $z$ of $P$ in $T$. Then either $z$ is in $vTw$ and the subpath of $P$ from $z$ to $y$ contradicts the minimality of $P$, or $z$ is in $T-vTw$ and the subpath of $P$ from $x$ to $z$ contradicts the minimality of $P$.

Suppose to the contrary that there is some vertex $z$ in $P- N[S]$. The subpath $P'$ of $P$ from $x$ to $z$ has one end in $S$ and one end in $G-N[S]$, so there is some internal vertex $z'$ of $P'$ in $N(S)$. But $N(S)\subseteq V(T)$, so $z'$ is an internal vertex of $P$ in $T$, a contradiction.
 \end{proof}
 
\begin{lem}\label{add2lem} If $(A,B)$ is a separation of $G$ such that $N(A\setminus B)=A\cap B$, $|A\setminus B|\geq 2$ and $G[A\setminus B]$ is connected, then there is a skeleton of $G[A\setminus B]$ with at least two high degree vertices.\end{lem}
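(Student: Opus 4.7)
The plan is to assume, for contradiction, that every skeleton of $G[A\setminus B]$ has fewer than two high-degree vertices, and to derive a contradiction by improving on a minimum-vertex skeleton. First I will verify that a skeleton exists at all: take a spanning tree of the connected graph $G[A\setminus B]$, attach every $v\in A\cap B$ to one of its neighbours in $A\setminus B$ (which exists because $A\cap B=N(A\setminus B)$), and then iteratively delete any leaf lying in $A\setminus B$; since $|A\setminus B|\geq 2$ this process leaves every vertex of $A\cap B$ as a leaf and so yields a skeleton. Let $T^*$ be a skeleton on the fewest vertices and set $k:=|A\cap B|$. Since $G$ is $4$-connected by \cref{connect}, we have $k\geq 4$, and a degree-sum count forces any tree with $k\geq 4$ leaves either to contain two vertices of degree $\geq 3$ or to be a \emph{spider}: one centre of degree $k$ joined to the $k$ leaves by internally disjoint legs. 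So I may assume $T^*$ is a spider with centre $c\in A\setminus B$ and legs $L_1,\dots,L_k$ terminating at $v_1,\dots,v_k\in A\cap B$.

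The first case is that some leg, say $L_1$, has length at least $2$. Then I apply \cref{path} with $v=c$ and $w=v_1$: the hypotheses hold since $cv_1\notin E(T^*)$ and $T^*\neq L_1$. This yields a path $P$ in $G[A]-\{c,v_1\}$ from an interior vertex $a$ of $L_1$ to some $b\in L_j\setminus\{c\}$ with $j\neq 1$, with all internal vertices of $P$ outside $V(T^*)$. I then form $T'$ by adding $P$ to $T^*$, deleting the edge of $L_j$ incident to $c$ (which breaks the unique cycle of $T^*+P$), and iteratively pruning any leaf not in $A\cap B$. A direct degree check shows $T'$ is a skeleton with $\deg_{T'}(c)=k-1\geq 3$ and $\deg_{T'}(a)=3$, contradicting the choice of $T^*$.

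In the remaining case every leg of $T^*$ has length $1$, so $T^*=K_{1,k}$ and $c$ is adjacent in $G$ to every vertex of $A\cap B$. Pick $c'\in (A\setminus B)\setminus\{c\}$; by \cref{triangles}, $\deg_G(c')\geq 6$, and $N_G(c')\subseteq A$ since $c'\in A\setminus B$. If some such $c'$ has at least two neighbours $v_1,v_2\in A\cap B$, then reattaching $v_1,v_2$ to $c'$, keeping the edges $cv_m$ for $m\geq 3$, and inserting a path from $c$ to $c'$ through $G[A\setminus B]$ produces a skeleton with $\deg(c)=k-1\geq 3$ and $\deg(c')=3$. Otherwise every vertex of $(A\setminus B)\setminus\{c\}$ has at most one neighbour in $A\cap B$; then for any component $C$ of $G[A\setminus B]-c$, the bound $|N_G(C)|\geq 4$ (from $4$-connectivity) together with $N_G(C)\subseteq\{c\}\cup(A\cap B)$ forces $C$ to contain at least three vertices $c_1,c_2,c_3$ with pairwise-distinct $A\cap B$-neighbours. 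I then form a skeleton by taking a path $Q$ from $c_1$ to $c_2$ in $C$, a path $R$ from $c$ into $V(Q)$ via a $G[A\setminus B]$-neighbour of $c$ in $C$, attaching the two corresponding leaves $v_{l_1},v_{l_2}$ to $c_1,c_2$ respectively, and keeping the remaining leaves attached to $c$. The resulting skeleton has $c$ of degree $k-1\geq 3$ and a second branch vertex of degree $3$ at the point where $R$ meets $Q$, again contradicting the choice of $T^*$. The main obstacle is the star case, and in particular this last subcase: here I must combine the $4$-connectivity of $G$, the minimum-degree bound $\delta(G)\geq 6$ from \cref{triangles}, and the connectivity of $G[A\setminus B]$ to guarantee the structural redundancy in $G[A\setminus B]-c$ needed to produce a second branch vertex.
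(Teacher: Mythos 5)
Your proof is correct, and its overall route is the one the paper takes: build a skeleton, use the handshaking identity (with $|A\cap B|\geq 4$ from \cref{connect}) to reduce to a spider with a single branch vertex, resolve the long-leg case by rerouting along the path supplied by \cref{path}, and resolve the star case using $4$-connectivity. The genuine divergence is in the star case. The paper handles it in one stroke: it finds a vertex $y\in A\setminus(B\cup\{w\})$ with a neighbour $x\in A\cap B$, takes a shortest path $P_1$ from $y$ to $A\cap B$ in $G-\{x,w\}$ (which automatically stays in $G[A]$ since it must first meet $A\cap B=N(A\setminus B)$), connects $P_1$ to the centre by a path $P_2$ inside $G[A\setminus B]$, and deletes the two star edges $xw,zw$; the endpoints of $P_2$ become the two branch vertices. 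You instead split on whether some vertex of $(A\setminus B)\setminus\{c\}$ has two neighbours in $A\cap B$, handling the easy subcase by reattachment and the other by applying $4$-connectivity to a component $C$ of $G[A\setminus B]-c$ (so $|N(C)|\geq 4$ and $N(C)\subseteq\{c\}\cup(A\cap B)$) to extract two vertices of $C$ with distinct attachments into $A\cap B$, then routing a path $Q$ between them and a connector $R$ from $c$. Both arguments are sound; the paper's avoids the subcase split and needs only connectivity of $G-\{x,w\}$, while yours is a little longer but arguably makes the source of the second branch vertex more explicit. Two small inaccuracies in your write-up, neither fatal: the minimality of $T^*$ is never used (the contradiction is with the blanket assumption, not with minimality), and the bound $\deg_G(c')\geq 6$ from \cref{triangles} plays no role in your final argument -- the star case needs only \cref{connect}, the connectivity of $G[A\setminus B]$, and your subcase hypothesis.
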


\begin{proof}
There is at least one subtree of $G[A]$ in which every vertex of $A\cap B$ is a leaf, since we can obtain such a tree by taking a spanning subtree of $G[A\setminus B]$ and adding the vertices in $A\cap B$ and, for each vertex in $A\cap B$, exactly one edge $e\in E(G)$ between that vertex and some vertex of $A\setminus B$. We can therefore select $T$ a subtree of $G[A]$ such that $A\cap B\subseteq V_1(T)$ and such that there is no proper subtree $T'$ of $T$ such that $A\cap B \subseteq V_1(T')$. There is no vertex $v$ in $V_1(T)\setminus B$, since for any such vertex $T-v$ is a proper subtree of $T$ and $A\cap B\subseteq V_1(T-v)$, a contradiction. Hence, $V_1(T)=A\cap B$. If $|V_{\geq 3}(T)|\geq 2$ then we are done, so we may assume there is a unique vertex $w$ in $V_{\geq 3}(T)$.

Suppose that for some $x\in A\cap B$ there is some vertex in $\mathrm{int} (xTw)$. By \cref{path}, there is a path $P$ of $G[A]-\{x,w\}$ from $xTw$ to $T-xTw$ with no internal vertex in $T$. Let $y$ be the endpoint of $P$ in $xTw$ and let $z$ be the other endpoint. Then $T':=(T\cup P)-\mathrm{int} (zTw)$ is a skeleton of $G[A\setminus B]$ that has a vertex of degree exactly 3. Since $|V_1(T')|=|A\cap B|\geq 4$ by \cref{connect}, $T'$ has at least two high degree vertices, (namely $y$ and $w$).

Suppose instead that $V(T)=\{w\}\cup (A\cap B)$. By \cref{connect} $G$ is 4-connected, so $(A\setminus B, B\cup \{w\})$ is not a separation of $G$, so there is some vertex $y$ in $A\setminus (B\cup \{w\})$ adjacent to some vertex $x$ in $A\cap B$. Let $P_1$ be a minimal length path from $y$ to $A\cap B$ in $G-\{x,w\}$ (and hence in $G[A]-\{x,w\}$), and let $z$ be the endpoint of $P_1$ in $A\cap B$. Let $P_1'$ be the path formed by adding the vertex $x$ and the edge $xy$ to $P_1$. Since $G[A\setminus B]$ is connected, we can select a minimal length path $P_2$ of $G[A\setminus B]$ from $P_1$ to $w$. Then $(T\cup P_1'\cup P_2)-\{xw, zw\}$ is a skeleton of $G[A\setminus B]$ that has a degree 3 vertex, and therefore at least two high degree vertices, (namely the endpoints of $P_2$).
\end{proof}

For any graph $H$ a {\it table} of $H$ is an ordered 6-tuple $\mathcal{X}:=(X_1,\dots ,X_6)$ of pairwise disjoint fragments of $H$ such that $X_5$ is  adjacent to $X_1, X_2$ and $X_6$, and $X_6$ adjacent to $X_3$ and $X_4$. For any subset $S$ of $V(H)$, $\mathcal{X}$ is {\it rooted} at $S$ if $|X_i\cap S|=1$ for $i\in\{1,2,3 ,4\}$ and $X_5\cap S=X_6\cap S=\emptyset$.
\begin{lem}\label{add2} If $(A,B)$ is a separation of $G$ such that $N(A\setminus  B)=A\cap B$, $|A\cap B|\geq 4$, $|A\setminus B|\geq 2$ and $G[A\setminus B]$ is connected, then there is a table of $G[A]$ rooted at $A\cap B$.\end{lem}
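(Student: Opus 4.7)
The plan is to apply \cref{add2lem} to extract a skeleton $T$ of $G[A\setminus B]$ with two distinct high-degree vertices $w_1,w_2$, and then carve the required table $(X_1,\dots,X_6)$ directly out of $T$. Since $V_1(T)=N(A\setminus B)=A\cap B$, the leaves of $T$ are exactly the vertices of $A\cap B$, and there are at least four of them.

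First I would fix the unique $w_1$-$w_2$ path $P_{12}$ in $T$. Because $w_1$ has $T$-degree at least $3$, removing the edge of $P_{12}$ incident to $w_1$ leaves $w_1$ with at least two other incident edges in $T$, each lying in a distinct subtree of $T-w_1$ that avoids $w_2$ and contains at least one leaf of $T$. I pick two such leaves $x_1,x_2$ and let $P_i$ be the $w_1$-$x_i$ path in $T$ for $i\in\{1,2\}$. Applying the same reasoning at $w_2$ produces distinct leaves $x_3,x_4$ with $w_2$-rooted paths $P_3,P_4$. The four leaves are pairwise distinct because the $T$-path from $x_1$ or $x_2$ to $w_2$ must pass through $w_1$, whereas the $T$-path from $x_3$ or $x_4$ to $w_1$ must pass through $w_2$.

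Next I would choose any edge $uv$ of $P_{12}$ (which has at least one edge since $w_1\neq w_2$), with $u$ closer to $w_1$, and split $P_{12}$ into subpaths $P_{12}^1$ (from $w_1$ to $u$) and $P_{12}^2$ (from $v$ to $w_2$). I define $X_i:=\{x_i\}$ for $i\in\{1,2,3,4\}$, $X_5:=V(P_{12}^1)\cup(V(P_1)\setminus\{x_1\})\cup(V(P_2)\setminus\{x_2\})$, and $X_6:=V(P_{12}^2)\cup(V(P_3)\setminus\{x_3\})\cup(V(P_4)\setminus\{x_4\})$. Connectedness of $X_5$ (respectively $X_6$) is immediate since each is a union of subpaths of $T$ sharing the common vertex $w_1$ (respectively $w_2$). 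Pairwise disjointness of the six fragments follows from the tree structure of $T$: the paths $P_1,P_2$ meet $P_{12}$ only at $w_1\in V(P_{12}^1)\subseteq X_5$, and $P_3,P_4$ meet $P_{12}$ only at $w_2\in V(P_{12}^2)\subseteq X_6$, with $V(P_{12}^1)\cap V(P_{12}^2)=\emptyset$. The required adjacencies $X_5$-$X_i$ for $i\in\{1,2\}$ and $X_6$-$X_i$ for $i\in\{3,4\}$ are witnessed by the $T$-neighbour of $x_i$ on $P_i$, while $X_5$ and $X_6$ are adjacent via the edge $uv$. Finally, $X_5\cap(A\cap B)=X_6\cap(A\cap B)=\emptyset$, because every vertex of $X_5\cup X_6$ has $T$-degree at least $2$ and so does not lie in $V_1(T)=A\cap B$.

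The main substance of the lemma is already packaged inside \cref{add2lem}: once a skeleton with two high-degree vertices is in hand, the remainder is tree-surgery bookkeeping, and I do not anticipate any real obstacle beyond keeping the two large fragments connected, disjoint, and free of $A\cap B$ — which is automatic from the tree structure above.
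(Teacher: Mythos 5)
Your proposal is correct and takes essentially the same route as the paper: apply \cref{add2lem} to get a skeleton with two high-degree vertices, then build the six fragments by following paths from those vertices to leaves. The only (immaterial) difference is bookkeeping — the paper takes $X_1,\dots,X_4$ to be the full $T$-paths from the branch-neighbours of the high-degree vertices to leaves, with $X_6=\{x\}$ a singleton and $X_5=V(wTx')$, whereas you make $X_1,\dots,X_4$ singleton leaves and absorb the rest of those paths into $X_5$ and $X_6$, splitting $P_{12}$ at an arbitrary edge rather than immediately before $x$; both allocations satisfy the table axioms for the same underlying reason.
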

\begin{proof}
By \cref{add2lem}, there is some skeleton $T$ of $G[A\setminus B]$ such that $|V_{\geq 3}(T)|\geq 2$. Let $w$ and $x$ be distinct vertices in $V_{\geq3}(T)$. Let $w_1, w_2$ and $w'$ be three neighbours of $w$ in $T$, and let $x', x_3$ and $x_4$ be three neighbours of $x$ in $T$, labelled so that $w'$ and $x'$ are both in $V(xTw)$. For $i\in\{1,2\}$ let $X_i$ be the vertex set of a path from $w_i$ to a leaf of $T$ in the component subtree of $T-w$ that contains $w_i$, and for $i\in \{3,4\}$ let $X_i$ be the vertex set of a path from $x_i$ to a leaf of $T$ in the component subtree of $T-x$ that contains $x_i$. Since $V_1(T)=A\cap B$, $|X_i\cap B|=1$ for  $i\in \{1,2,3,4\}$. Let $X_5:=V(wTx')$ and let $X_6:=\{x\}$. Then $\mathcal{X}:=(X_1,\dots , X_6)$ satisfies our claim.
\end{proof}

\section{Degree 7 Vertices}\label{7sec}

In this section we show that $V_7(G)=\emptyset$.

\begin{clm}\label{nosing}If $v\in V_7(G)$, then there is no isolated vertex in $G-N[v]$.\end{clm}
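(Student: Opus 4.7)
Suppose for contradiction that some vertex $w$ is isolated in $G-N[v]$ for $v\in V_7(G)$; then $N(w)\subseteq N(v)$, and since $\delta(G)\ge 6$ by \cref{triangles} and $|N(v)|=7$, we have $\deg(w)\in\{6,7\}$.

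Set $V':=\{v,w\}\cup N(v)$, so $|V'|=9$. By \cref{triangles} every edge of $G$ lies in at least five triangles, so every $u\in N(v)$ has $\deg_{G[N(v)]}(u)\ge 5$; hence $G[N(v)]$ is $K_7$ minus a matching $M$ with $|M|\le 3$. Therefore $G[V']$ is $K_9$ minus at most $1+|M|+(7-\deg(w))\le 5$ edges --- the non-edge $vw$, the matching $M$ inside $N(v)$, and (when $\deg(w)=6$) the single edge from $w$ to the unique vertex of $N(v)\setminus N(w)$.

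My plan is to produce a Petersen minor of $G$, contradicting (iv), by exploiting the fact (established in the proof of \cref{triangles}) that every 10-vertex graph with at most six non-edges contains $\mathcal{P}$ as a subgraph. By \cref{minvertices} there is some $z\in V(G)\setminus V'$; if $z$ has at least eight neighbours in $V'$, then $G[V'\cup\{z\}]$ has at most $5+1=6$ non-edges, giving $\mathcal{P}\subseteq G$.

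The main obstacle is the case when no vertex outside $V'$ has eight neighbours in $V'$. I plan to attack this by analysing a minimum vertex cut $C\subseteq N(v)$ separating $\{v,w\}$ from $V(G)\setminus V'$; by \cref{connect}, $4\le|C|\le 7$. When $|C|\le 6$, \cref{separations} applied to the separation $((V(G)\setminus V')\cup C,\,V')$ --- with $v$ playing the role of the vertex dominating $C$ --- yields a vertex $u\in (V(G)\setminus V')\cap\mathcal{L}$, which I expect to absorb into a branch set (together with a short $u$-to-$V'$ path provided by \cref{path} on a skeleton obtained from \cref{add2lem}) completing a 10-vertex near-complete minor and producing $\mathcal{P}$. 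When $|C|=7$, every $u_i\in N(v)$ has at least one external neighbour, and an edge-count via (vi) on the contractions $G/vu_i$, combined with the density of $G[N(v)]$, should force the external edges to concentrate so that some outside vertex has at least eight neighbours in $V'$, reducing to the easy case.
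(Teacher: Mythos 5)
Your proposal breaks at the very step that was supposed to give the ``easy case.'' Every vertex $z\in V(G)\setminus V'$ is by definition not in $N(v)$, so $z$ is non-adjacent to $v$; and since $w$ is isolated in $G-N[v]$ and $z\in V(G-N[v])$, $z$ is also non-adjacent to $w$. Thus every vertex outside $V'$ has at most seven neighbours in $V'$, and the hypothesis ``$z$ has at least eight neighbours in $V'$'' can never hold. Adding any single outside vertex $z$ to $V'$ creates at least the three non-edges $vz$, $wz$, $vw$, plus whatever is missing inside $N(v)$ and from $w$ --- easily more than the six non-edges the brute-force verification tolerates. So the easy case does not exist, and your argument collapses to the ``hard case,'' which you only sketch at the level of ``should force,'' ``I expect to absorb,'' with no concrete construction of a $\mathcal{P}$-minor.

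The paper's proof avoids this trap by using a whole component $C$ of $G-N[v]$ distinct from $\{w\}$, not a single outside vertex. Contracting $C$ gives a new vertex adjacent to all of $N(C)\subseteq N(v)$, with $|N(C)|\geq 4$; combined with $|N(w)|\geq 6$ and $|N(w)\cup N(C)|\leq 7$, this forces $|N(w)\cap N(C)|\geq 3$. The proof then labels $N(v)$ carefully (using that $\overline{G[N(v)]}$ has maximum degree $\leq 1$, which you also observed) and hand-builds a copy of $\mathcal{P}$ in the contracted graph, splitting on whether $v_6v_7$ is an edge. This explicit branch-set construction is the missing ingredient in your plan; the edge-count observations you made are correct and relevant, but on their own they never reach a graph dense enough for the automated check, and the ``absorb $u$ into a branch set'' step is too vague to constitute a proof.
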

\begin{proof}
Suppose for contradiction that there is some isolated vertex $u$ in $G-N[v]$. By \cref{triangles}, $|N(u)|\geq 6$. By \cref{minvertices}, there is some component $C$ of $G- N[v]$ not containing $u$. Since $|N(C)|\geq 4$ by \cref{connect} and $|N(u)\cup N(C)|\leq |N(v)|=7$, there is some vertex $v_1$ in $N(u)\cap N(C)$. Let $v_1$, $v_2$ and $v_3$ be distinct vertices in $N(C)$, and let $v_4$ and $v_5$ be distinct vertices in $N(u)\setminus \{v_1, v_2, v_3\}$. Let $v_6$ and $v_7$ be the remaining vertices of $N(v)$. By \cref{triangles}, for $i\in \{1,2,\dots ,7\}$, $N(v_i)\cap N(v)\geq 5$. If some vertex in $\{v_2,v_3\}$, say $v_2$, is not adjacent to some vertex in $\{v_4,v_5\}$, say $v_5$, then $v_2$ and $v_5$ are both adjacent to every other vertex in $N(v)$, and in particular $v_2v_4$ and $v_3v_5$ are edges in $G$. Hence, there are two disjoint edges between $\{v_2,v_3\}$ and $\{v_4,v_5\}$. Without loss of generality, $\{v_2v_4, v_3v_5\}\subseteq E(G)$. We now consider two cases depending on whether $v_6v_7\in E(G)$.

\begin{case2} $v_6v_7\in E(G)$\textup{:}\end{case2}

Since $v_1$ is adjacent to all but at most one of the other neighbours of $v$, either $v_1v_6\in E(G)$ or $v_1v_7\in E(G)$, so without loss of generality $v_1v_6\in E(G)$. Since $v_7$ is adjacent to all but at most one of the other neighbours of $v$, either $\{v_7v_2, v_7v_5\}\subseteq E(G)$ or $\{v_7v_3,v_7v_4\}\subseteq E(G)$, so without loss of generality $\{v_7v_2,v_7v_5\}\subseteq E(G)$. Let $G'$ be obtained from $G$ by contracting $C$ to a single vertex. Then $\mathcal{P}\subseteq G'$ (see \cref{nosing1}a), contradicting (iv).

\begin{case2} $v_6v_7\notin E(G)$\textup{:}\end{case2}
Then $v_6$ and $v_7$ are both adjacent to every other neighbour of $v$. Let $G'$ be obtained from $G$ by contracting $C$ to a single vertex. Then $\mathcal{P}\subseteq G'$ (see \cref{nosing1}b), contradicting (iv).
\end{proof}

\begin{figure}[htb]
\centering
\begin{tikzpicture}[line width=1pt,vertex/.style={circle,inner sep=0pt,minimum size=0.2cm}] 

    \pgfmathsetmacro{\n}{5}; 
    \pgfmathsetmacro{\m}{\n-1};
\node[draw=white, fill=none, label=left: a)] (K) at ($(-2,-1.2)$) [vertex] {};
  \node[draw=black,fill=gray, label=above:$C$] (V_0) at ($(90-0*360/\n:1.5)$) [vertex] {};
 \node[draw=black,fill=gray, label=above:$v_1$] (V_1) at ($(90-1*360/\n:1.5)$) [vertex] {};
 \node[draw=black,fill=gray, label=right:$u$] (V_2) at ($(90-2*360/\n:1.5)$) [vertex] {}; 
\node[draw=black,fill=gray, label=left:$v_4$] (V_3) at ($(90-3*360/\n:1.5)$) [vertex] {};
 \node[draw=black,fill=gray, label=above:$v_2$] (V_4) at ($(90-4*360/\n:1.5)$) [vertex] {};
  \node[draw=black,fill=gray, label={[label distance=-3mm]87:$v_3$}] (V_{a0}) at ($(90-0*360/\n:0.75)$) [vertex] {};
 \node[draw=black,fill=gray, label={[label distance=-1mm]above:$v_6$}] (V_{a1}) at ($(90-1*360/\n:0.75)$) [vertex] {};
 \node[draw=black,fill=gray, label={[label distance=-1.5mm]85:$v_5$}] (V_{a2}) at ($(90-2*360/\n:0.75)$) [vertex] {}; 
\node[draw=black,fill=gray, label={[label distance=-1.5mm]95:$v$}] (V_{a3}) at ($(90-3*360/\n:0.75)$) [vertex] {};
 \node[draw=black,fill=gray, label={[label distance=-1mm]above:$v_7$}] (V_{a4}) at ($(90-4*360/\n:0.75)$) [vertex] {};
    \foreach \x in {0,...,4} {
  
\draw[draw=black] (V_\x) -- (V_{a\x});
    }

\draw[draw=black](V_1)--(V_2);
\draw[draw=black](V_2)--(V_3);
\draw[draw=black](V_3)--(V_4);
\draw[draw=black](V_4)--(V_0);
\draw[draw=black](V_0)--(V_1);
\draw[draw=black](V_{a1})--(V_{a3});
\draw[draw=black](V_{a2})--(V_{a4});
\draw[draw=black](V_{a3})--(V_{a0});
\draw[draw=black](V_{a4})--(V_{a1});
\draw[draw=black](V_{a0})--(V_{a2});
\end{tikzpicture}
\hspace{0mm}
\begin{tikzpicture}[line width=1pt,vertex/.style={circle,inner sep=0pt,minimum size=0.2cm}] 

    \pgfmathsetmacro{\n}{5}; 
    \pgfmathsetmacro{\m}{\n-1};
    \node[draw=white, fill=none, label=left: b)] (K) at ($(-2,-1.2)$) [vertex] {};
  \node[draw=black,fill=gray, label=above:$C$] (V_0) at ($(90-0*360/\n:1.5)$) [vertex] {};
 \node[draw=black,fill=gray, label=above:$v_1$] (V_1) at ($(90-1*360/\n:1.5)$) [vertex] {};
 \node[draw=black,fill=gray, label=right:$u$] (V_2) at ($(90-2*360/\n:1.5)$) [vertex] {}; 
\node[draw=black,fill=gray, label=left:$v_4$] (V_3) at ($(90-3*360/\n:1.5)$) [vertex] {};
 \node[draw=black,fill=gray, label=above:$v_2$] (V_4) at ($(90-4*360/\n:1.5)$) [vertex] {};
  \node[draw=black,fill=gray, label={[label distance=-3mm]87:$v_3$}] (V_{a0}) at ($(90-0*360/\n:0.75)$) [vertex] {};
 \node[draw=black,fill=gray, label={[label distance=-1mm]above:$v$}] (V_{a1}) at ($(90-1*360/\n:0.75)$) [vertex] {};
 \node[draw=black,fill=gray, label={[label distance=-1.5mm]85:$v_5$}] (V_{a2}) at ($(90-2*360/\n:0.75)$) [vertex] {}; 
\node[draw=black,fill=gray, label={[label distance=-1.5mm]95:$v_6$}] (V_{a3}) at ($(90-3*360/\n:0.75)$) [vertex] {};
 \node[draw=black,fill=gray, label={[label distance=-1mm]above:$v_7$}] (V_{a4}) at ($(90-4*360/\n:0.75)$) [vertex] {};
    \foreach \x in {0,...,4} {
  
\draw[draw=black] (V_\x) -- (V_{a\x});
    }

\draw[draw=black](V_1)--(V_2);
\draw[draw=black](V_2)--(V_3);
\draw[draw=black](V_3)--(V_4);
\draw[draw=black](V_4)--(V_0);
\draw[draw=black](V_0)--(V_1);
\draw[draw=black](V_{a1})--(V_{a3});
\draw[draw=black](V_{a2})--(V_{a4});
\draw[draw=black](V_{a3})--(V_{a0});
\draw[draw=black](V_{a4})--(V_{a1});
\draw[draw=black](V_{a0})--(V_{a2});
\end{tikzpicture}
\hspace{0mm}
\begin{tikzpicture}[line width=1pt,vertex/.style={circle,inner sep=0pt,minimum size=0.2cm}] 

    \pgfmathsetmacro{\n}{5}; 
    \pgfmathsetmacro{\m}{\n-1};
\node[draw=white, fill=none, label=left: c)] (K) at ($(-2,-1.2)$) [vertex] {};
  \node[draw=black,fill=gray, label=above:$v$] (V_0) at ($(90-0*360/\n:1.5)$) [vertex] {};
 \node[draw=black,fill=gray, label=above:$X_4$] (V_1) at ($(90-1*360/\n:1.5)$) [vertex] {};
 \node[draw=black,fill=gray, label=right:$X_6$] (V_2) at ($(90-2*360/\n:1.5)$) [vertex] {}; 
\node[draw=black,fill=gray, label=left:$X_5$] (V_3) at ($(90-3*360/\n:1.5)$) [vertex] {};
 \node[draw=black,fill=gray, label=above:$X_1$] (V_4) at ($(90-4*360/\n:1.5)$) [vertex] {};
  \node[draw=black,fill=gray,  label={[label distance=-3mm]87:$v_5$}] (V_{a0}) at ($(90-0*360/\n:0.75)$) [vertex] {};
 \node[draw=black,fill=gray, label={[label distance=-1mm]above:$v_7$}] (V_{a1}) at ($(90-1*360/\n:0.75)$) [vertex] {};
 \node[draw=black,fill=gray,  label={[label distance=-1.5mm]85:$X_3$}] (V_{a2}) at ($(90-2*360/\n:0.75)$) [vertex] {}; 
\node[draw=black,fill=gray, label={[label distance=-1.5mm]95:$X_2$}] (V_{a3}) at ($(90-3*360/\n:0.75)$) [vertex] {};
 \node[draw=black,fill=gray, label={[label distance=-1mm]above:$v_6$}] (V_{a4}) at ($(90-4*360/\n:0.75)$) [vertex] {};
    \foreach \x in {0,...,4} {
  
\draw[draw=black] (V_\x) -- (V_{a\x});
    }

\draw[draw=black](V_1)--(V_2);
\draw[draw=black](V_2)--(V_3);
\draw[draw=black](V_3)--(V_4);
\draw[draw=black](V_4)--(V_0);
\draw[draw=black](V_0)--(V_1);
\draw[draw=black](V_{a1})--(V_{a3});
\draw[draw=black](V_{a2})--(V_{a4});
\draw[draw=black](V_{a3})--(V_{a0});
\draw[draw=black](V_{a4})--(V_{a1});
\draw[draw=black](V_{a0})--(V_{a2});
\end{tikzpicture}
\caption{\label{nosing1}}
\end{figure}

The following is the main result of this section.

\begin{lem}\label{7N}$V_7(G)=\emptyset$.\end{lem}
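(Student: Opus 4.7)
The plan is to argue by contradiction: suppose $v \in V_7(G)$, and construct a Petersen minor matching the configuration drawn in \cref{nosing1}c. Since $|V(G)| \geq 10$ by \cref{minvertices}, $G - N[v]$ is non-empty, so pick a component $C$ of $G - N[v]$. By \cref{nosing} and \cref{connect}, $|V(C)| \geq 2$ and $|N(C)| \geq 4$, with $N(C) \subseteq N(v)$ trivially. I will apply \cref{add2} to the separation $A := V(C) \cup N(C)$, $B := V(G) \setminus V(C)$, whose hypotheses are met: this produces a table $(X_1, \ldots, X_6)$ with roots $x_i \in X_i \cap N(C)$ for $i \leq 4$, interior fragments $X_5, X_6 \subseteq V(C)$, and the adjacencies $X_5 \sim X_1, X_2, X_6$ and $X_6 \sim X_3, X_4$. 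Setting $\{v_5, v_6, v_7\} := N(v) \setminus \{x_1, x_2, x_3, x_4\}$, the candidate Petersen minor has the ten branch sets $\{v\}, X_1, \ldots, X_6, \{v_5\}, \{v_6\}, \{v_7\}$.

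Beyond the five table adjacencies and the trivial edges from $v$ to $x_1, x_4, v_5$, realizing this minor requires exactly the seven edges
\[
x_1 v_6,\quad x_3 v_6,\quad v_6 v_7,\quad x_2 v_7,\quad x_4 v_7,\quad x_2 v_5,\quad x_3 v_5
\]
within $G[N(v)]$. By \cref{triangles}, every vertex of $N(v)$ has at least five neighbours in $N(v)$, so the complement of $G[N(v)]$ is a matching $M$ of size at most three. I will choose labels using the following flexibility: swapping $X_1 \leftrightarrow X_2$, swapping $X_3 \leftrightarrow X_4$, or swapping $\{X_1, X_2\} \leftrightarrow \{X_3, X_4\}$ together with $X_5 \leftrightarrow X_6$ are all table-preserving symmetries, and $v_5, v_6, v_7$ can be permuted freely among the three non-root vertices of $N(v)$. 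Among the 21 edges of $K_7$ on $N(v)$, only the 7 above are \emph{required}; the remaining 14 \emph{non-required} edges split as 6 within $\{x_1, \ldots, x_4\}$, 2 within $\{v_5, v_6, v_7\}$ (namely $v_5 v_6$ and $v_5 v_7$), and 6 bipartite. A short case analysis on how the at-most-three edges of $M$ distribute among these three buckets yields, in every configuration, a labelling sending every edge of $M$ to a non-required position.

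The main obstacle will be the sub-case $|N(C)| = 4$, where $\{x_1, \ldots, x_4\} = N(C)$ as a set and only within-part relabellings remain available. Three observations handle it: (i) any edge of $M$ inside $\{x_1, \ldots, x_4\}$ is automatically non-required; (ii) any edge of $M$ inside $\{v_5, v_6, v_7\}$ can be made $v_5 v_6$ or $v_5 v_7$ by labelling one of its endpoints as $v_5$; and (iii) the non-required bipartite graph between $\{x_1, \ldots, x_4\}$ and $\{v_5, v_6, v_7\}$ contains several matchings saturating $\{v_5, v_6, v_7\}$, so any bipartite edges of $M$ can also be placed consistently. Once the seven required edges are verified, the ten branch sets form a Petersen minor in $G$, contradicting (iv).
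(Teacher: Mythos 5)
Your proposal follows the paper's own route. With $v\in V_7(G)$, a non-singleton component $C$ of $G-N[v]$ exists via \cref{nosing}, a table $(X_1,\dots,X_6)$ of $G[N[C]]$ rooted at $N(C)$ exists via \cref{add2}, and the branch sets $\{v\}, X_1,\dots, X_6, \{v_5\},\{v_6\},\{v_7\}$ are precisely those of the Petersen minor in \cref{nosing1}c; your list of the seven required edges in $G[N(v)]$ is the correct one. The only divergence is how one argues that the at-most-three non-edges of $G[N(v)]$ --- a matching, by \cref{triangles} --- all land in non-required positions: the paper cases on whether $v_5v_6v_7$ is a triangle of $G$, whereas you case on how the matching edges distribute over three ``buckets''. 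Both work in principle, but your observation~(iii) is too loose as stated. A bipartite matching edge couples a root label and a non-root label, and the root labels vary only over the order-8 table-symmetry group $\langle (x_1\,x_2),\,(x_3\,x_4),\,(x_1\,x_3)(x_2\,x_4)\rangle$; so ``the non-required bipartite graph has several matchings saturating $\{v_5,v_6,v_7\}$'' does not by itself place a bipartite matching edge, particularly once a within-$\{v_5,v_6,v_7\}$ edge of $M$ has already forced one non-root label to $v_5$. What actually closes the argument, and what you should state, is that this order-8 group is transitive on $\{x_1,\dots,x_4\}$ \emph{and} that for every 3-element subset $S\subseteq\{x_1,\dots,x_4\}$ the non-required bipartite graph $\{x_1v_5,x_1v_7,x_2v_6,x_3v_7,x_4v_5,x_4v_6\}$ contains a perfect matching from $S$ onto $\{v_5,v_6,v_7\}$. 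With those two facts made explicit, every distribution of at most three matching edges across the buckets admits a compatible relabelling, and the proof closes as you intend; as currently written, the final step is asserted rather than proved.
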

\begin{proof}
Suppose for contradiction that there is some vertex $v\in V_7(G)$. By \cref{minvertices}, there is a non-empty component $C$ of $G-N[v]$. By \cref{connect}, $|N(C)|\geq 4$ and by \cref{nosing}, $|V(C)|\geq 2$. Hence, by \cref{add2} with $A:=N[C]$ and $B:=V(G-C)$, there is a table $\mathcal{X}:=(X_1,\dots ,X_6)$ of $G[N[C]]$ rooted at $N(C)$.

Let $\{v_1,\dots ,v_7\}:=N(v)$, with $v_i\in X_i$ for $i\in \{1,2,3,4\}$. By \cref{triangles}, $|N(v_i)\cap N(v)|\geq 5$ for $i\in \{1,2,\dots ,7\}$. We consider two cases depending on whether $v_5v_6v_7$ is a triangle of $G$.

\begin{case3} $v_5v_6v_7$ is a triangle of $G$\textup{:}\end{case3}

Let $Q$ be the bipartite graph with bipartition $V:=\{v_1,v_2,v_3,v_4\}$, $W:=\{v_5,v_6,v_7\}$ and $E(Q):=\{xy: xy\notin E(G),x\in V, y\in W\}$. Then $\Delta (Q)\leq 1$, so without loss of generality $E(Q)\subseteq \{v_1v_5,v_2v_6, v_3v_7\}$. Let $G'$ be obtained from $G$ by contracting $G[X_i]$ to a single vertex for each $i\in \{1,2,\dots , 6\}$. Then $\mathcal{P}\subseteq G'$ (see \cref{nosing1}c), contradicting (iv).

\begin{case3} $v_5v_6v_7$ is not a triangle of $G$\textup{:}\end{case3}
We may assume without loss of generality that $v_5v_6\notin E(G)$. Then $v_5$ and $v_6$ are both adjacent to every other neighbour of $v$. At most one neighbour of $v$ is not adjacent to $v_7$, so $v_7$ has some neighbour in $\{v_1,v_2\}$, say $v_2$, and some neighbour in $\{v_3,v_4\}$, say $v_4$. Let $G'$ be obtained from $G$ by contracting $G[X_i]$ to a single vertex for each $i\in \{1,2,\dots ,7 \}$. Then $\mathcal{P}\subseteq G'$ (see \cref{nosing1}c), contradicting (iv).
\end{proof}
  
  \section{Degree 8 Vertices}\label{8sec}

We now prove that $V_8(G)=\emptyset$. 
 Note that the following lemma applies to any graph, not just $G$. This means we can apply it to minors of $G$, which we do in 
\cref{dist3,2dist3}. 
\begin{clm}\label{deg8lem} If $H$ is a graph that contains a vertex $v$ such that $\deg(v)=8$, $|N(v')\cap N(v)|\geq 5$ for all $v'\in N(v)$, and $C$ is a component of $H\setminus N[v]$ with $|N_H(C)|\geq 3$, then $\mathcal{P}$ is a minor of $H$ unless all of the following conditions hold:
\begin{enumerate}
\item $\overline{K_3}$ is an induced subgraph of $H[N(v)\setminus N(C)]$,
\item $\overline{C_4}$ is an induced subgraph of $H[N(v)]$,
\item $H[N(C)]\cong K_3$.
\end{enumerate}
\end{clm}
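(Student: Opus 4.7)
\medskip\noindent\emph{Proof plan.} We argue by contrapositive: assume $\mathcal{P}$ is not a minor of $H$, and derive the three conditions. Let $H'$ be obtained from $H$ by contracting $C$ to a single vertex $c$, so $N_{H'}(c)=N_H(C)$ and $\mathcal{P}\not\subseteq H'$. The set $\{v,c\}\cup N(v)$ has exactly $10=|V(\mathcal{P})|$ vertices, so the approach is to exhibit $\mathcal{P}$ as a spanning subgraph of $H'[\{v,c\}\cup N(v)]$ under a suitable bijection. The hypothesis that every $v'\in N(v)$ has at least five neighbours in $N(v)$ means $\overline{H}[N(v)]$ has maximum degree at most $2$, so $\overline{H}[N(v)]$ is a vertex-disjoint union of paths and cycles on $8$ vertices, and $H[N(v)]$ has at most $8$ non-edges.

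View $\mathcal{P}$ with outer $5$-cycle $r_1r_2r_3r_4r_5$, inner pentagram on $s_1,\dots,s_5$, and spokes $r_is_i$. Place $v$ at $r_1$. Since $cv\notin E(H')$, vertex $c$ must take one of the six positions non-adjacent to $r_1$, namely $r_3,r_4,s_2,s_3,s_4,s_5$. In each choice the three Petersen-neighbours of $c$'s position (pairwise non-adjacent in $\mathcal{P}$, since $\mathcal{P}$ has girth $5$) must be drawn from $N(C)$, the remaining five $v_i$'s occupy the remaining five positions, and the nine Petersen edges disjoint from $v$ and $c$ must appear in $H[N(v)]$. The non-edge budget of at most $8$ in $H[N(v)]$ typically leaves enough flexibility to realise some labelling, so only very restricted configurations prevent a Petersen minor.

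We derive the three conditions in turn. \emph{Condition 3:} if $H[N(C)]\not\cong K_3$, then either $|N(C)|\geq 4$ or $|N(C)|=3$ has a non-edge; the additional freedom in selecting $c$'s three Petersen-neighbours from $N(C)$ is used to dodge any potential obstruction among the non-edges of $\overline{H}[N(v)]$, exhibiting $\mathcal{P}$ as a minor. \emph{Condition 2:} assuming condition~3, suppose $\overline{H}[N(v)]$ contains no induced $C_4$; together with max degree~$2$ this restricts $\overline{H}[N(v)]$ to short paths and cycles of length at least $5$, and for each such structure an explicit labelling exhibits $\mathcal{P}$. \emph{Condition 1:} assuming conditions~2 and~3, if $\overline{H}[N(v)\setminus N(C)]$ has no triangle then every triangle of $\overline{H}[N(v)]$ meets the triangle $N(C)$, and this narrower structure again admits a valid Petersen-minor labelling.

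The chief obstacle is bookkeeping: six possible positions for $c$, several possibilities for the path-cycle structure of $\overline{H}[N(v)]$, and varied interactions of these components with $N(C)$ and with the induced $\overline{C_4}$. As in \cref{nosing}, each subcase should be dispatched by a labelled Petersen drawing in the style of \cref{nosing1}, reducing each verification to confirming the presence of nine labelled edges in $H[N(v)]$.
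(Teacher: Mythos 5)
Your proposal identifies the correct setting (contracting $C$ to a single vertex $c$, placing $v$ at one Petersen vertex and $c$ at a non-adjacent one, and realising the remaining nine Petersen edges inside $H[N(v)]$) and correctly observes that $\overline{H}[N(v)]$ has maximum degree at most~$2$. However, there are two genuine gaps.

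First, the bulk of the claim is a finite but extensive verification, and your proposal never carries it out. Phrases such as ``the non-edge budget \ldots typically leaves enough flexibility'' and ``for each such structure an explicit labelling exhibits $\mathcal{P}$'' are the entire argument; nothing is actually checked. The paper has to verify on the order of forty cases (one per colour-isomorphism type of $\overline{H'}$ with $3$ white and $5$ black vertices), each by exhibiting a labelled Petersen subgraph. A plan that defers all of this to ``bookkeeping'' is not a proof.

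Second, and more substantively, you are missing a structural reduction that the paper relies on to make the case analysis tractable. Working directly with $\overline{H}[N(v)]$, which is an arbitrary disjoint union of paths and cycles on $8$ vertices, gives a far larger family of coloured graphs to check. The paper instead passes to an \emph{edge-minimal} spanning subgraph $H'$ of $H[N(v)]$ with $\delta(H')\geq 5$; minimality forces every edge of $H'$ to be incident with a degree-$5$ vertex, which translates to: the vertices of degree at most~$1$ in $\overline{H'}$ form a clique. Combined with $\Delta(\overline{H'})\leq 2$, this shows $\overline{H'}$ is a disjoint union of cycles (each of length $\geq 3$) together with at most a single edge or isolated vertex --- no long paths. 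This collapses the family to the $42$ types in the paper's table. Your proposal, having skipped this step, would face a much larger case list even if you attempted to complete the verification. You also do not address the $|N(C)|\geq 4$ situation, which requires its own short argument (the paper shows a set of $\geq 4$ pairwise independent vertices avoiding the unique triangle of $\overline{H'}$ cannot exist).

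So: right general strategy, but the key simplifying lemma (edge-minimality of $H'$) is absent, and the verification that constitutes essentially the entire proof is never performed.
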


\begin{proof}
By assumption, $\delta(H[N(v)])\geq 5$. Let $H'$ be an edge-minimal spanning subgraph of $H[N(v)]$ such that 
 $\delta(H')\geq 5$. 
Every edge $e$ in $H'$ is incident to some vertex of degree 5, since otherwise $\delta(H'-e)\geq 5$, contradicting the minimality of $H'$. Hence, the vertices of degree at most 1 in $\overline{H'}$ form a clique in $\overline{H'}$. Now $\Delta(\overline{H})\leq 2$, since $|V(H')|=\deg(v)=8$ and $\delta(H')\geq 5$. It follows that $\overline{H'}$ is the disjoint union of some number of cycles, all on at least three vertices, and a complete graph on at most two vertices. Let $x$, $y$ and $z$ be three vertices in $N(C)$, and let $1,2, \dots , 5$ be the remaining vertices of $N(v)$. Colour $x,y,$ and $z$ white and colour $1,2,\dots,5$ black. In \cref{longtable} we examine every possible graph $\overline{H'}$, up to colour preserving isomorphism. We use cycle notation to label the graphs, with an ordered pair representing an edge and a singleton representing an isolated vertex. In each case we find $\mathcal{P}$ as a subgraph of the graph $G'$ obtained from $G$ by contracting $C$ to a single vertex, except in the unique case where $K_3$ is an induced subgraph of $\overline{H'}-\{x,y,z\}$, $C_4$ is an induced subgraph of $\overline{H'}$ and $\{x,y,z\}$ is an independent set of vertices in $\overline{H'}$.

\begin{longtable}{|c|c|}
\caption{}\label{longtable}\endfirsthead \caption{}(continued) \endhead
\hline
\includegraphics{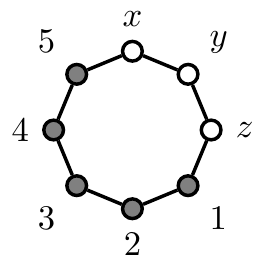}
\includegraphics{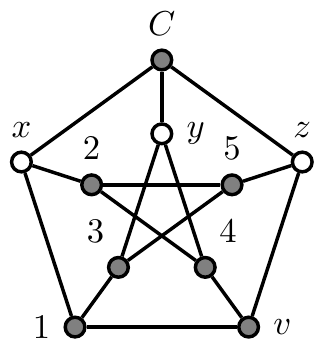}
&
\includegraphics{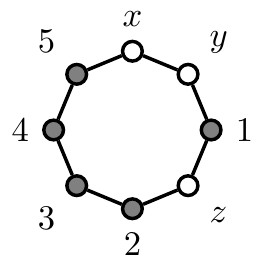}
\includegraphics{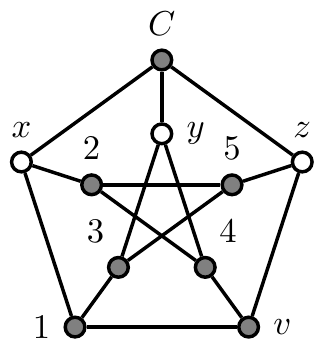}
\\
(wwwbbbbb)&(wwbwbbbb)
\\
\hline
\includegraphics{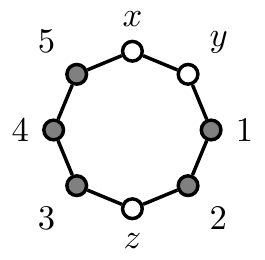}
\includegraphics{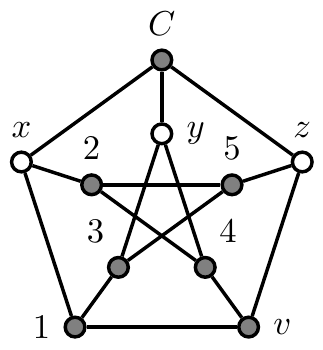}
&
\includegraphics{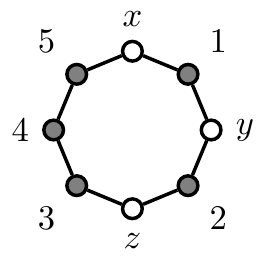}
\includegraphics{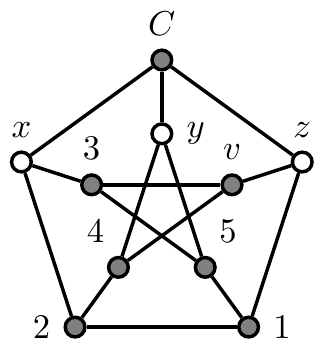}
\\
(wwbbwbbb)&(wbwbwbbb)
\\
\hline
\includegraphics{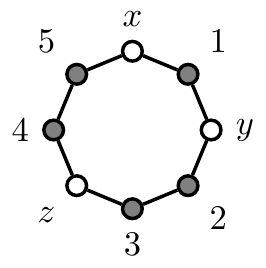}
\includegraphics{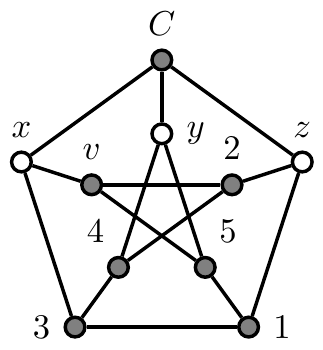}
&
\includegraphics{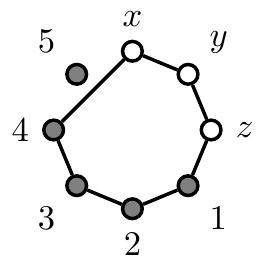}
\includegraphics{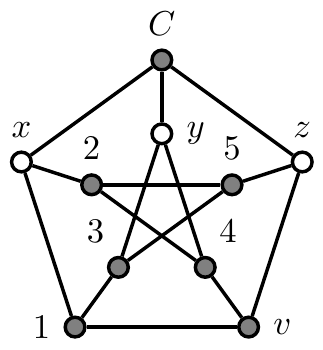}
\\
(wbwbbwbb)&(wwwbbbb)(b)
\\
\hline
\includegraphics{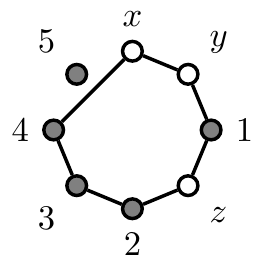}
\includegraphics{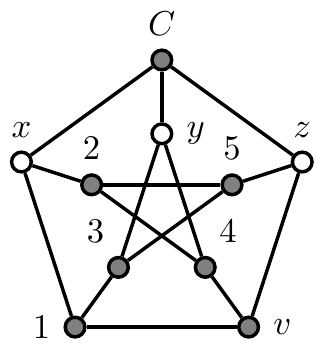}
&
\includegraphics{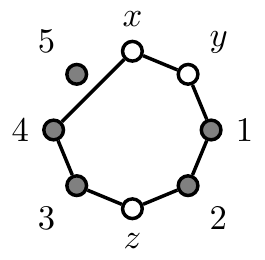}
\includegraphics{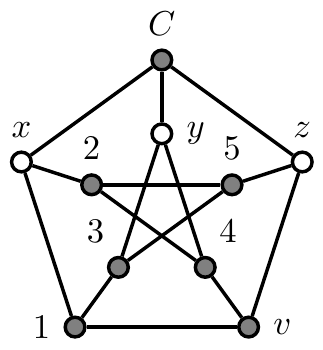}
\\
(wwbwbbb)(b)&(wwbbwbb)(b)
\\
\hline
\includegraphics{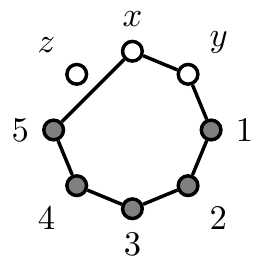}
\includegraphics{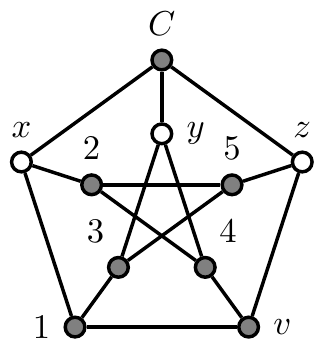}
&
\includegraphics{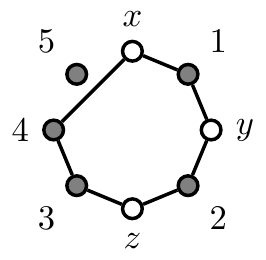}
\includegraphics{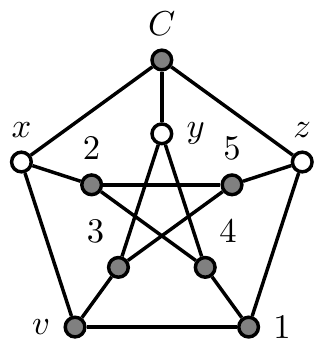}
\\
(wwbbbbb)(w)&(wbwbwbb)(b)
\\
\hline
\includegraphics{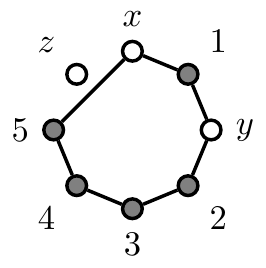}
\includegraphics{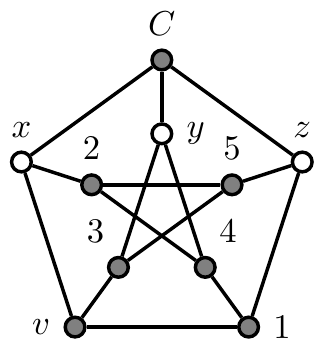}
&
\includegraphics{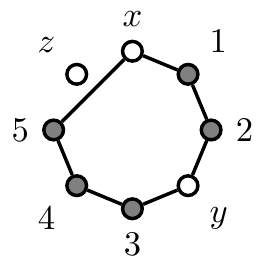}
\includegraphics{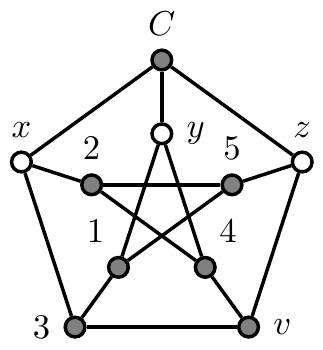}
\\
(wbwbbbb)(w)&(wbbwbbb)(w)
\\
\hline
\includegraphics{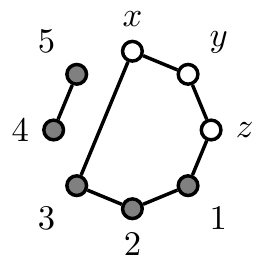}
\includegraphics{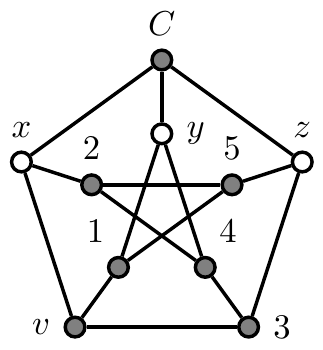}
&
\includegraphics{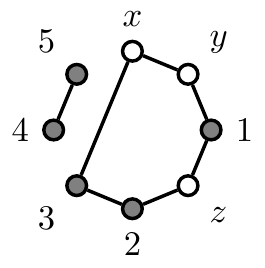}
\includegraphics{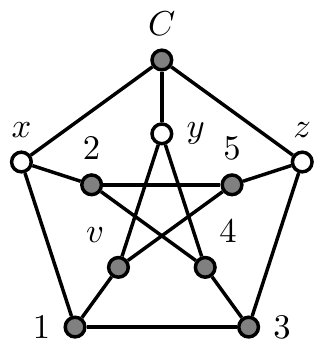}
\\
(wwwbbb)(bb)&(wwbwbb)(bb)
\\
\hline
\includegraphics{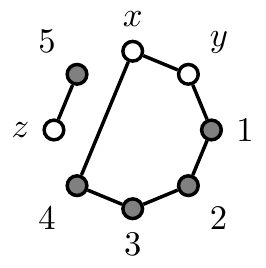}
\includegraphics{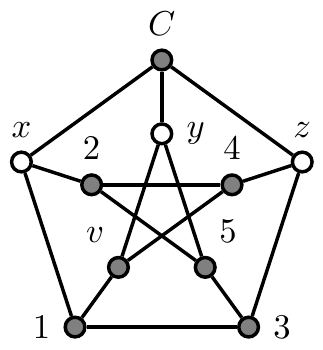}
&
\includegraphics{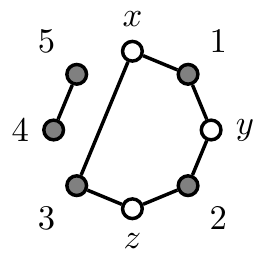}
\includegraphics{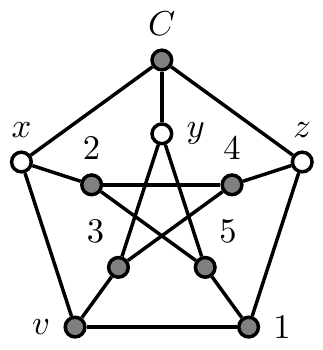}
\\
(wwbbbb)(wb)&(wbwbwb)(bb)
\\
\hline
\includegraphics{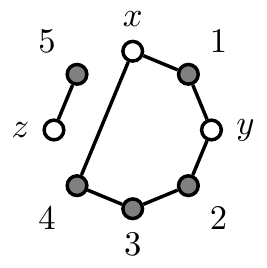}
\includegraphics{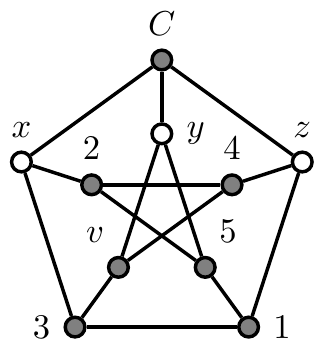}
&
\includegraphics{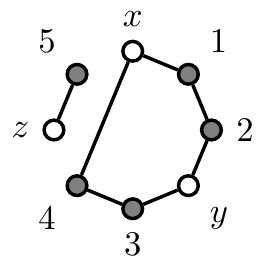}
\includegraphics{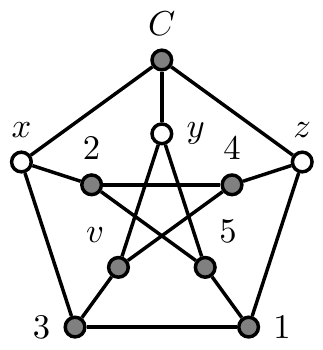}
\\
(wbwbbb)(wb)&(wbbwbb)(wb)
\\
\hline
\includegraphics{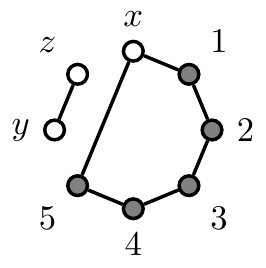}
\includegraphics{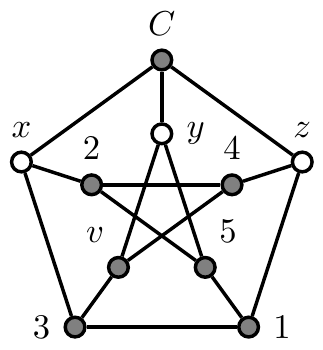}
&
\includegraphics{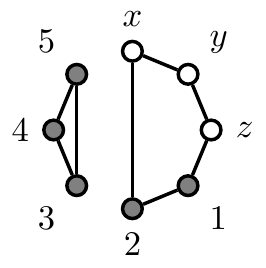}
\includegraphics{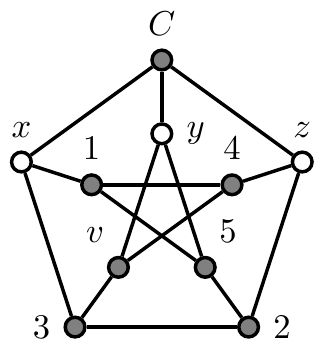}
\\
(wbbbbb)(ww)&(wwwbb)(bbb)
\\
\hline
\includegraphics{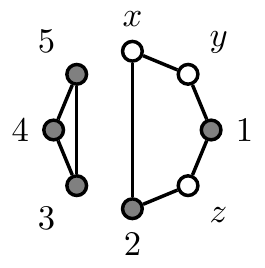}
\includegraphics{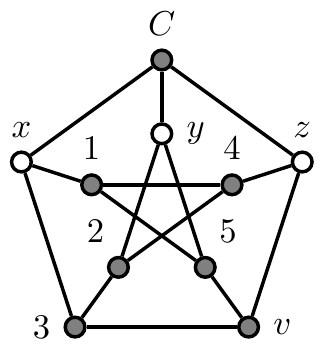}
&
\includegraphics{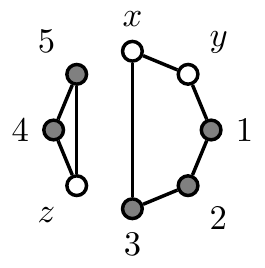}
\includegraphics{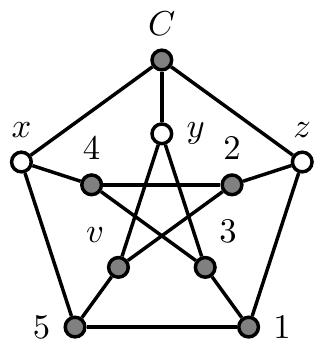}
\\
(wwbwb)(bbb)&(wwbbb)(wbb)
\\
\hline
\includegraphics{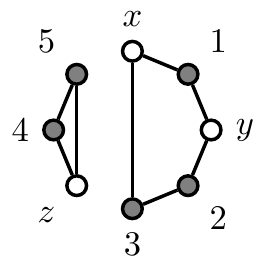}
\includegraphics{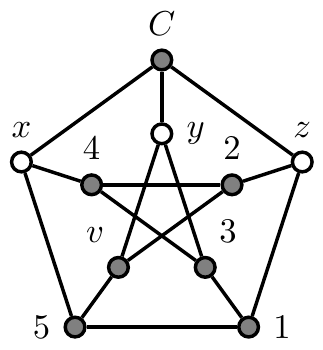}
&
\includegraphics{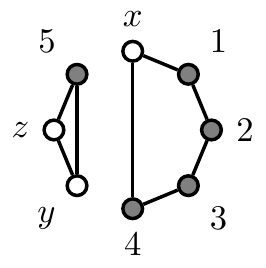}
\includegraphics{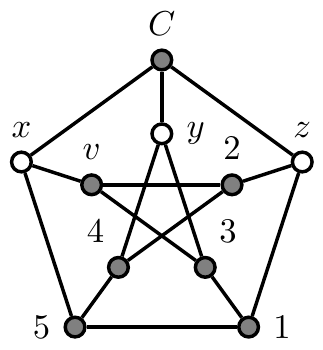}
\\
(wbwbb)(wbb)&(wbbbb)(wwb)
\\
\hline
\includegraphics{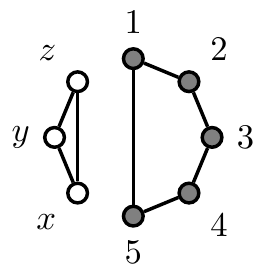}
\includegraphics{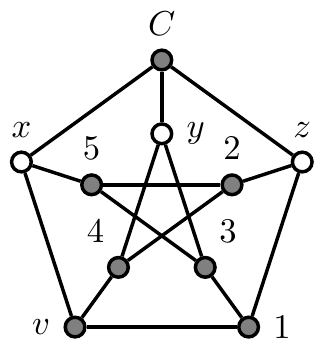}
&
\includegraphics{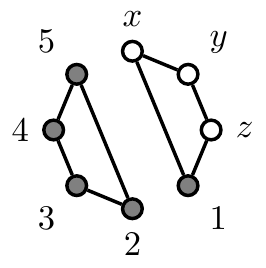}
\includegraphics{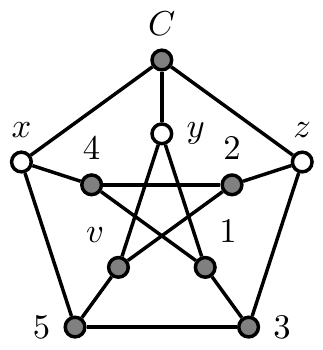}
\\
(bbbbb)(www)&(wwwb)(bbbb)
\\
\hline
\includegraphics{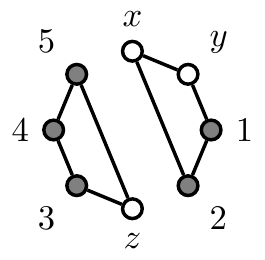}
\includegraphics{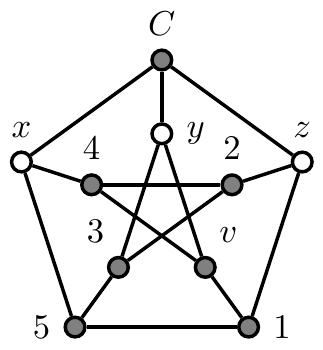}
&
\includegraphics{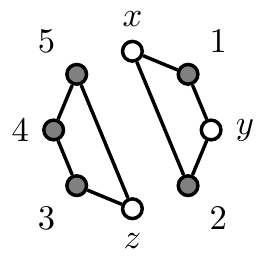}
\includegraphics{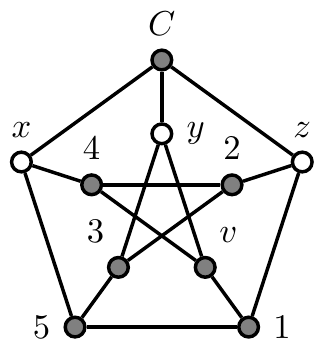}
\\
(wwbb)(wbbb)&(wbwb)(wbbb)
\\
\hline
\includegraphics{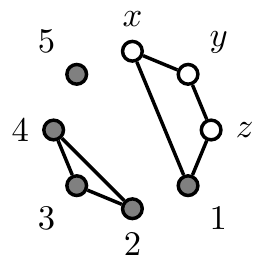}
\includegraphics{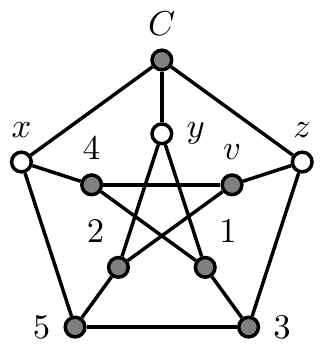}
&
\includegraphics{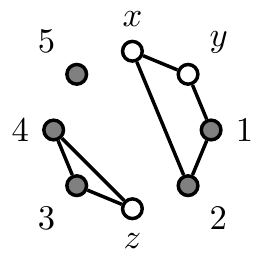}
\includegraphics{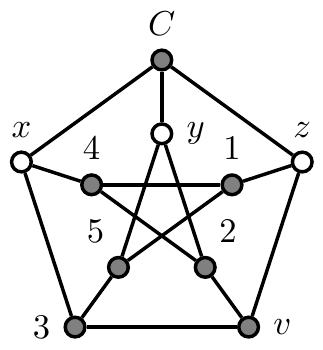}
\\
(wwwb)(bbb)(b)&(wwbb)(wbb)(b)
\\
\hline
\includegraphics{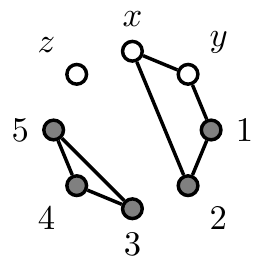}
\includegraphics{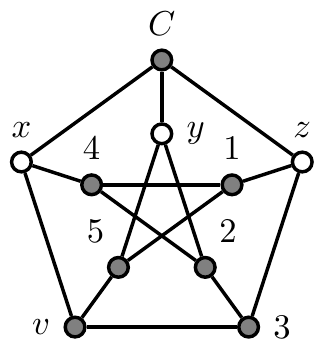}
&
\includegraphics{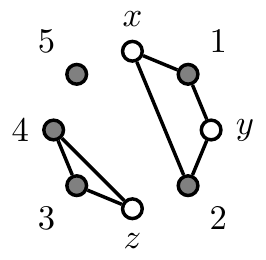}
\includegraphics{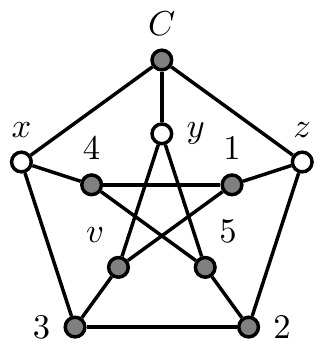}
\\
(wwbb)(bbb)(w)&(wbwb)(wbb)(b)
\\
\hline
\includegraphics{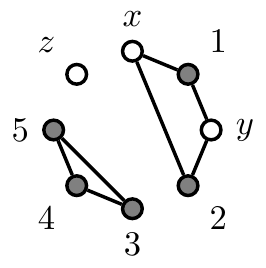}
1., 2. and 3. hold.
&
\includegraphics{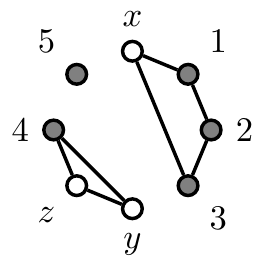}
\includegraphics{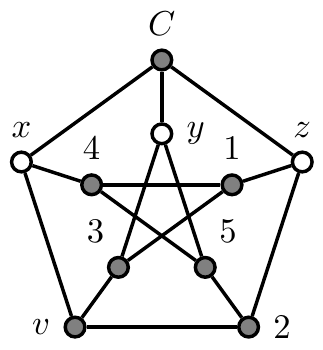}
\\
(wbwb)(bbb)(w)&(wbbb)(wwb)(b)
\\
\hline
\includegraphics{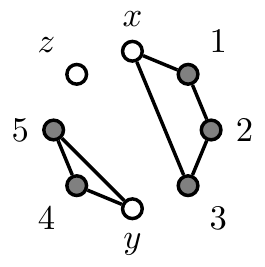}
\includegraphics{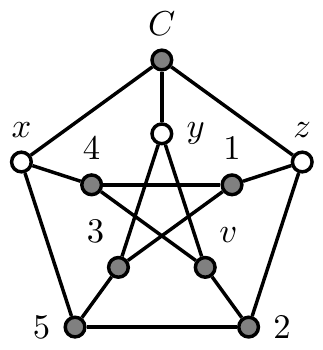}
&
\includegraphics{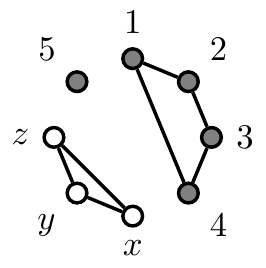}
\includegraphics{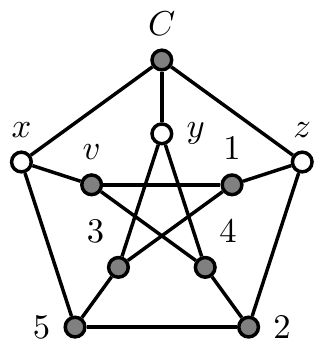}
\\
(wbbb)(wbb)(w)&(bbbb)(www)(b)
\\
\hline
\includegraphics{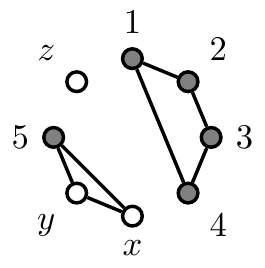}
\includegraphics{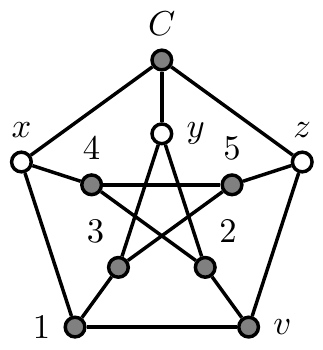}
&
\includegraphics{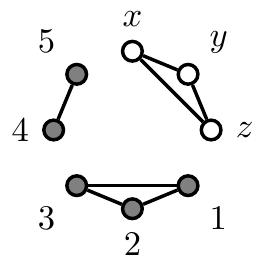}
\includegraphics{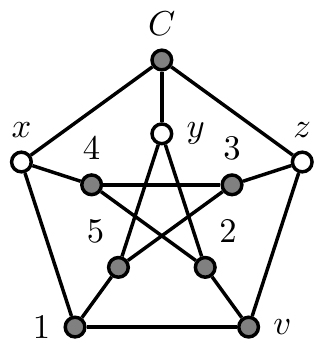}
\\
(bbbb)(wwb)(w)&(www)(bbb)(bb)
\\
\hline
\includegraphics{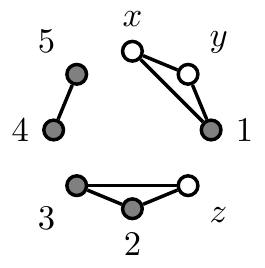}
\includegraphics{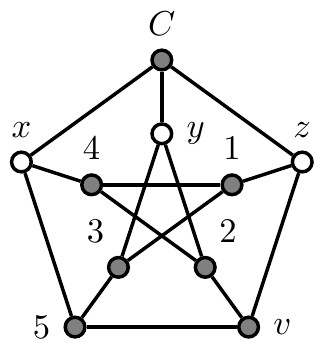}
&
\includegraphics{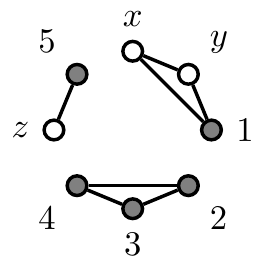}
\includegraphics{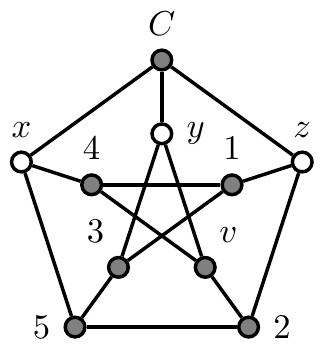}
\\
(wwb)(wbb)(bb)&(wwb)(bbb)(wb)
\\
\hline
\includegraphics{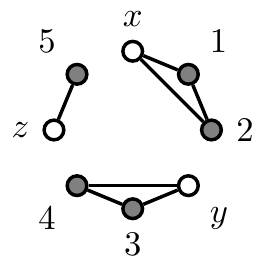}
\includegraphics{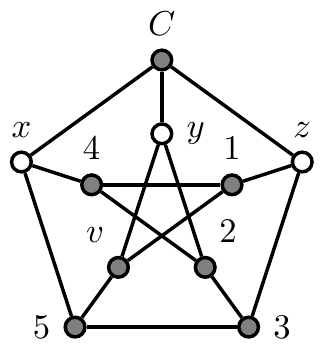}
&
\includegraphics{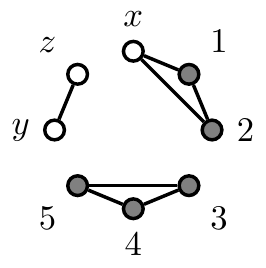}
\includegraphics{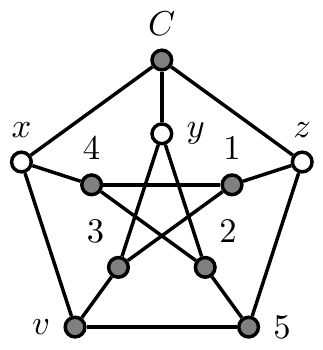}
\\
(wbb)(wbb)(wb)&(wbb)(bbb)(ww)
\\
\hline
\end{longtable}

\newpage
It follows that if $N(C)=\{x,y,z\}$, then the claim holds. Suppose to the contrary that $\mathcal{P}$ is not a minor of $H$ and $|N(C)|\geq 4$. As \cref{longtable} shows, $\overline{H'}$ contains both $K_3$ and $C_4$ as induced subgraphs. Since $\Delta(\overline{H'})\leq 2$, no vertex of $\overline{H'}$ is in more than one cycle, so there is a unique triangle in $\overline{H'}$. For any subset $S\subseteq N(C)$ of size 3, $S$ is a set of independent vertices in $\overline{H'}$, disjoint from the unique triangle of $\overline{H'}$ by the case analysis in \cref{longtable}. Hence, $N(C)$ is an independent set of at least four vertices in $\overline{H'}$, disjoint from the unique triangle of $\overline{H'}$. However, given the structure of $H$, there is no such set, a contradiction.
\end{proof}

The following is the main result of this section.

\begin{lem}\label{deg8} $V_8(G)=\emptyset$.\end{lem}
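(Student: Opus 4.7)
The plan is to derive a contradiction in essentially one step by invoking the already-established \cref{deg8lem}. Suppose for contradiction that some $v \in V_8(G)$. I first verify the hypotheses of \cref{deg8lem} with $H = G$. By \cref{triangles}, every edge of $G$ lies in at least five triangles, so in particular $|N(v')\cap N(v)| \geq 5$ for every $v' \in N(v)$. By \cref{minvertices} we have $|V(G)| \geq 10 > 9 = |N[v]|$, so $G - N[v]$ has at least one vertex; let $C$ be any component of $G - N[v]$. Since $v \in V(G)\setminus(V(C)\cup N(C))$ and $V(C) \neq \emptyset$, the set $N(C)$ is a separator of $G$, and so by \cref{connect} we have $|N_G(C)| \geq 4 \geq 3$.

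Now apply \cref{deg8lem}. The conclusion says that either $\mathcal{P}$ is a minor of $G$, which contradicts property (iv), or the three structural conditions (1), (2), (3) all hold. However, condition (3) requires $G[N(C)] \cong K_3$, forcing $|N(C)| = 3$, which directly contradicts $|N(C)| \geq 4$ obtained above. Thus the second alternative is impossible and $\mathcal{P}$ is a minor of $G$, giving the desired contradiction.

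The heavy lifting in this argument is packaged inside \cref{deg8lem} and its attendant case analysis in the long table. The only remaining work here is to verify the hypotheses and to observe that 4-connectivity (from \cref{connect}) forbids the structural escape route left open by the claim (namely the possibility that $N(C)$ is a triangle). There is no substantial obstacle beyond these hypothesis checks; the real difficulty was already absorbed into the proof of \cref{deg8lem}.
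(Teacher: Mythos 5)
Your proof is correct and matches the paper's argument step for step: it verifies the hypotheses of \cref{deg8lem} via \cref{triangles}, \cref{minvertices}, and \cref{connect}, then observes that $|N(C)|\geq 4$ rules out the structural escape clause $G[N(C)]\cong K_3$.
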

\begin{proof}
Suppose to the contrary that $v\in V(G)$ has degree 8. By \cref{triangles}, $|N(v')\cap N(v)|\geq 5$ for all $v'\in N(v)$. By \cref{minvertices}, $G-N[v]$ has some non-empty component $C$. By \cref{connect}, $|N(C)|\geq 4$, so $G[N(C)]\not\cong K_3$. Hence, by \cref{deg8lem}, $\mathcal{P}$ is  a minor of $G$, contradicting (iv).
\end{proof}

 \section{Degree 6 Vertices}\label{6sec}

In this section we focus on vertices of degree 6 in $G$. Recall that for a given vertex $v$ of our minimal counterexample $G$, a subgraph $H$ of $G$ is {\it $v$-suitable} if it is a component of $G-N[v]$ that contains some vertex of $\mathcal{L}$. The main result of this section is that if $v\in V_6(G)$, then for any $v$-suitable subgraph $H$ there is a $v$-suitable subgraph $H'$ such that $N(H')\setminus N(H)\neq \emptyset$ (see \cref{6suit}). 

\begin{clm}\label{complete} If $v\in V_6(G)$, then $N[v]$ is a clique.\end{clm}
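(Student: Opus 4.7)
The proof should be essentially a one-line counting argument using \cref{triangles}. Here is my plan.

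Fix $v\in V_6(G)$ and write $N(v)=\{v_1,\dots,v_6\}$. For any $v_i\in N(v)$, the edge $vv_i$ lies in at least five triangles by \cref{triangles}. The third vertex of any such triangle belongs to the common neighbourhood $N(v)\cap N(v_i)$, which is a subset of $N(v)\setminus\{v_i\}$. Since $|N(v)\setminus\{v_i\}|=\deg(v)-1=5$, we must have equality: $N(v)\cap N(v_i)=N(v)\setminus\{v_i\}$, i.e.\ $v_i$ is adjacent to every other vertex of $N(v)$. Applying this to each $i\in\{1,\dots,6\}$ shows $G[N(v)]\cong K_6$, so $G[N[v]]\cong K_7$ and $N[v]$ is a clique.

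The only thing happening here is that $\deg(v)=6$ leaves exactly five possible apices for triangles through $v v_i$, and \cref{triangles} forces all five to be used. There is no real obstacle: no appeal to \cref{connect}, no minor-building argument, and no case analysis is needed. The degree-6 hypothesis is tight in the sense that for $\deg(v)\geq 7$ the same counting only gives that each $v_i$ has at most $\deg(v)-6$ non-neighbours in $N(v)$, which is why the degree-7 and degree-8 sections require substantially more work.
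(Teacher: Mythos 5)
Your proof is correct and uses essentially the same argument as the paper: apply \cref{triangles} to each edge $vv_i$ and observe that the degree-6 hypothesis leaves only five candidate apices for the five required triangles, forcing $v_i$ to be adjacent to all of $N(v)\setminus\{v_i\}$.
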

\begin{proof}
By definition, $v$ is dominant in $G[N[v]]$. Let $w$ be a vertex in $N(v)$. Then $w$ is adjacent to each of the five other vertices in $N(v)$, by \cref{triangles} applied to the edge $vw$.\end{proof}

This result is useful because it means that for an induced subgraph $H$ of $\mathcal{P}$ on seven or fewer vertices, $H\subseteq G[N[v]]$. Throughout this section we show that certain statements about the structure of $G$ imply $\mathcal{P}$ is a minor of $G$, and are therefore false. When illustrating this, the vertices of $N[v]$ will be coloured white, for ease of checking. 


\begin{clm}\label{messy} If $v\in V_6(G)$ and $C$ is a component of $G- N[v]$ with $|N(C)|\geq 5$, then $|V(C)|=1$.\end{clm}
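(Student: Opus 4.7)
The plan is to derive a contradiction by exhibiting a Petersen minor of $G$. Suppose $|V(C)|\ge 2$. By \cref{complete} applied to $v$, the set $N[v]=\{v,v_1,\ldots,v_6\}$ induces a $K_7$ in $G$. Applying \cref{add2} to the separation $(A,B):=(N[C],V(G)\setminus V(C))$---whose hypotheses hold because $|A\cap B|=|N(C)|\ge 4$, $|A\setminus B|=|V(C)|\ge 2$, and $G[V(C)]$ is connected---I obtain a table $\mathcal{X}=(X_1,\ldots,X_6)$ of $G[N[C]]$ rooted at $N(C)$, labelled so that $v_i\in X_i$ for $i\in\{1,2,3,4\}$, while $X_5,X_6\subseteq V(C)$, $X_5$ is adjacent to $X_1,X_2,X_6$, and $X_6$ is adjacent to $X_3,X_4$.

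The goal is a Petersen model whose ten branch sets are the seven singletons $\{v\},\{v_1\},\ldots,\{v_6\}$ from $N[v]$, together with three further branch sets disjoint from $N[v]$. In the standard Petersen labelling $O_1,\ldots,O_5,I_1,\ldots,I_5$, I would place the three outside branch sets at positions $I_3,I_4,I_5$, since the only Petersen edge among them is $I_3I_5$. The seven $N[v]$-branch sets occupy $O_1,\ldots,O_5,I_1,I_2$, and all Petersen edges internal to $\{O_1,\ldots,O_5,I_1,I_2\}$ are realised by the $K_7$. Because $v$ has no neighbours outside $N(v)$, it must occupy one of $O_1,O_2$ (whose Petersen neighbours all lie inside $\{O_1,\ldots,O_5,I_1,I_2\}$), and when $|N(C)|=5$ so that $v_6\notin N(C)$, a similar analysis restricts the placement of $v_6$.

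I would split into two cases depending on $|V(C)|$. If $|V(C)|=2$, write $V(C)=\{u,u'\}$; then \cref{separations} (with witnessing vertex $v$) combined with \cref{triangles,7N} and the bound $\deg(u)\le|N(C)|+1\le 7$ forces $\deg(u)=\deg(u')=6$, so $uu'\in E(G)$ and each of $u,u'$ is adjacent to exactly five vertices of $N(C)$. Since $|V(G)|\ge 10$ and $|N[v]\cup V(C)|=9$, some other component $C'$ of $G-N[v]$ has $|N(C')|\ge 4$ by \cref{connect}; I would use $\{u\},\{u'\}$ as $I_3,I_5$ (with $uu'$ realising $I_3I_5$) and a connected subset of $V(C')$ as $I_4$. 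If $|V(C)|\ge 3$, then $G[V(C)]$ is connected on at least three vertices and so contains three distinct vertices $c_1,c_2,c_3$ with $c_1c_3\in E(G)$; I would take $\{c_1\},\{c_2\},\{c_3\}$ as $I_3,I_4,I_5$ (with $c_1c_3$ realising $I_3I_5$).

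The remaining step is to biject $\{v_1,\ldots,v_6\}$ with the six Petersen positions not occupied by $v$ so that all $15$ Petersen edges are realised in $G$. The main obstacle is verifying this bijection always exists: it reduces to a bipartite system of distinct representatives whose Hall condition follows, in Case~1, from $|N(C')|\ge 4$ and the table's symmetries $X_1\leftrightarrow X_2$ and $X_3\leftrightarrow X_4$; and, in Case~2, from the bound $|N(c_i)\cap N(v)|\ge 7-|V(C)|$ combined with an appropriate choice of $c_1,c_2,c_3$ using the table structure. I would verify this by a short case analysis on $|N(C)|\in\{5,6\}$ and, in Case~1, on which vertices of $N(v)$ are missed by $u$, $u'$, and $V(C')$. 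In every case $G$ has a Petersen minor, contradicting (iv).
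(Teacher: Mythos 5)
Your overall target---a Petersen model made of the seven singletons on $N[v]$, which induce $K_7$ by \cref{complete}, plus three branch sets disjoint from $N[v]$, with $v$ placed at a position whose three Petersen neighbours all lie in $N[v]$---has exactly the shape of the paper's construction, and your first case ($|V(C)|=2$) is sound and essentially the paper's own: the second component $C'$ exists by \cref{minvertices}, $|N(C')|\ge 4$ by \cref{connect}, each of $u,u'$ is adjacent to five vertices of $N(v)$ by \cref{triangles,7N}, and the Hall-type verification you sketch does go through there.

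The genuine gap is your second case ($|V(C)|\ge 3$), where you insist that the three outside branch sets be single vertices $c_1,c_2,c_3$ of $C$. The positions $I_3,I_4,I_5$ then force $c_1$ and $c_3$ to each have two prescribed neighbours in $N(v)$ and $c_2$ three, with common-neighbour conditions at $I_1$ and $I_2$; but your only quantitative input, $|N(c_i)\cap N(v)|\ge 7-|V(C)|$, is already too weak at $|V(C)|=5$ and vacuous for $|V(C)|\ge 7$. In a large component most vertices of $C$ have no neighbour in $N(v)$ at all, and nothing you cite prevents $C$ from attaching to the five or six vertices of $N(C)$ through pairwise non-adjacent, pairwise distant vertices of $C$, each with exactly one neighbour in $N(v)$; in that situation no choice of three single vertices can work, so the ``short case analysis'' you defer cannot exist. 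The table $\mathcal{X}$ does not rescue this: you never actually use it, and it supplies only two fragments ($X_5,X_6$) lying wholly inside $C$, whereas three are needed (this is precisely why the table suffices in the degree-7 argument of \cref{7N}, where $N[v]$ already contributes eight branch sets, but not here). What is missing is the substance of the paper's proof: one must allow the three outside branch sets to be connected subgraphs of $C$ that are contracted, and prove that $C$ contains three such subgraphs with the required pattern of attachments to $N(C)$; this is exactly what the skeleton machinery of \cref{add2lem,path} and the case analysis on skeletons with three high-degree vertices (or two vertices of degree four, etc.) in the paper's proof of \cref{messy} accomplishes, and it is the bulk of the argument. Even in the favourable subcase where suitable single vertices happen to exist, the Hall condition at $I_1$ and $I_2$ involves intersections of neighbourhoods and would still need a real argument rather than the degree bound alone.
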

\begin{proof}
Suppose for contradiction that $|V(C)|>1$.  By \cref{add2lem} with $A:=N[C]$ and $B:=V(G-C)$, there is a skeleton $T$ of $C$ with at least two high degree vertices. 
The handshaking lemma implies \begin{align}\label{handshaking}\sum_{i=3}^\infty (i-2)\cdot |V_i(T)|=|V_1(T)|-2.\end{align}
Note that $|V_1(T)|=|N(C)|$ and $|N(C)|\in \{5, 6\}$, so $|V_1(T)|-2\in \{3,4\}$. Hence either $|V_{\geq 3}(T)|\in\{3,4\}$ (Case 2 below), $V_3(T)=\emptyset$ and $|V_4(T)|=2$ (Cases 3 and 4 below), or $|V_3(T)|=1$ and $|V_{\geq 4}(T)|=1$ (Case 5).

\begin{case}$|V(C)|= 2$\textup{:}\end{case}

\vspace{10mm}

\renewcommand\windowpagestuff{%

\begin{center}
\begin{tikzpicture}[line width=1pt,vertex/.style={circle,inner sep=0pt,minimum size=0.2cm}] 

    \pgfmathsetmacro{\n}{5}; 
    \pgfmathsetmacro{\m}{\n-1};

  \node[draw=black,fill=gray, label=above:$w$] (V_0) at ($(90-0*360/\n:1.5)$) [vertex] {};
 \node[draw=black,fill=gray, label=above:$x$] (V_1) at ($(90-1*360/\n:1.5)$) [vertex] {};
 \node[draw=black,fill=none, label=right:$v_1$] (V_2) at ($(90-2*360/\n:1.5)$) [vertex] {}; 
\node[draw=black,fill=gray, label=left:$C'$] (V_3) at ($(90-3*360/\n:1.5)$) [vertex] {};
 \node[draw=black,fill=none, label=above:$v_2$] (V_4) at ($(90-4*360/\n:1.5)$) [vertex] {};
  \node[draw=black,fill=none, label={[label distance=-3mm]87:$v_3$}] (V_{a0}) at ($(90-0*360/\n:0.75)$) [vertex] {};
 \node[draw=black,fill=none,  label={[label distance=-1mm]above:$v_4$}] (V_{a1}) at ($(90-1*360/\n:0.75)$) [vertex] {};
 \node[draw=black,fill=none, label={[label distance=-1.5mm]85:$v''$}] (V_{a2}) at ($(90-2*360/\n:0.75)$) [vertex] {}; 
\node[draw=black,fill=none, label={[label distance=-1.5mm]95:$v'$}] (V_{a3}) at ($(90-3*360/\n:0.75)$) [vertex] {};
 \node[draw=black,fill=none, label={[label distance=-1mm]above:$v$}] (V_{a4}) at ($(90-4*360/\n:0.75)$) [vertex] {};
    \foreach \x in {0,...,4} {
  
\draw[draw=black] (V_\x) -- (V_{a\x});
    }

\draw[draw=black](V_1)--(V_2);
\draw[draw=black](V_2)--(V_3);
\draw[draw=black](V_3)--(V_4);
\draw[draw=black](V_4)--(V_0);
\draw[draw=black](V_0)--(V_1);
\draw[draw=black](V_{a1})--(V_{a3});
\draw[draw=black](V_{a2})--(V_{a4});
\draw[draw=black](V_{a3})--(V_{a0});
\draw[draw=black](V_{a4})--(V_{a1});
\draw[draw=black](V_{a0})--(V_{a2});
\end{tikzpicture}
\captionof{figure}{}\label{messy1}
\end{center}
}
\opencutright
  \begin{cutout}{1}{0.75\linewidth}{0pt}{9}
  Since $C$ is connected, the two vertices $w$ and $x$ of $C$ are adjacent. By \cref{triangles} applied to $wx$, $w$ and $x$ have at least five common neighbours, $v_1,\dots ,v_5$. By \cref{minvertices}, $|V(G-N[v]-C)|\geq 1$, so there is some component $C'\neq C$ of $G- N[v]$. By \cref{connect}, $|N(C')|\geq 4$. Both $N(C')$ and $\{v_1,\dots ,v_5\}$ are subsets of $N(v)$ and $|N(v)|=6$, so $|N(C')\cap \{v_1,\dots ,v_5\}|\geq 3$. Assume without loss of generality that $N(C')\supseteq\{v_1,v_2,v'\}$, where $v'$ is neither $v_3$ nor $v_4$. Let $v''$ be the unique vertex in $N(v)\setminus \{v_1,v_2,v_3,v_4,v'\}$. Let $G'$ be obtained from $G$ by contracting $C'$ to a single vertex. Then $\mathcal{P}\subseteq G'$ by \cref{complete} (see \cref{messy1}), contradicting (iv).
  \end{cutout}

\begin{case}$C$ has a skeleton $T$ with at least three high degree vertices\textup{:}\end{case}
By repeatedly contracting edges of $T\cap C$, we can obtain a minor $T'$ of $T$ such that $T'$ is a tree, $V_1(T')=N(C)$, there are at least three vertices in $V_{\geq 3}(T')$ and $|V_{\geq 3}(T'/e)|\leq 2$ for every edge $e\in E(T'-V_1(T'))$. Contracting an edge of $T'-V_1(T')$ can only reduce $|V_{\geq 3}(T')|$ by 1, and only if both endpoints of the edge are in $|V_{\geq 3}(T')|$. Hence, there are exactly three

\vspace{8mm}

\renewcommand\windowpagestuff{%
\begin{center}
\begin{tikzpicture}[line width=1pt,vertex/.style={circle,inner sep=0pt,minimum size=0.2cm}] 

    \pgfmathsetmacro{\n}{5}; 
    \pgfmathsetmacro{\m}{\n-1};

  \node[draw=black,fill=gray, label=above:$w$] (V_0) at ($(90-0*360/\n:1.5)$) [vertex] {};
 \node[draw=black,fill=gray, label=above:$x$] (V_1) at ($(90-1*360/\n:1.5)$) [vertex] {};
 \node[draw=black,fill=gray, label=right:$y$] (V_2) at ($(90-2*360/\n:1.5)$) [vertex] {}; 
\node[draw=black,fill=none, label=left:$v_5$] (V_3) at ($(90-3*360/\n:1.5)$) [vertex] {};
 \node[draw=black,fill=none, label=above:$v_1$] (V_4) at ($(90-4*360/\n:1.5)$) [vertex] {};
  \node[draw=black,fill=none, label={[label distance=-3mm]87:$v_2$}] (V_{a0}) at ($(90-0*360/\n:0.75)$) [vertex] {};
 \node[draw=black,fill=none, label={[label distance=-1mm]above:$v_3$}] (V_{a1}) at ($(90-1*360/\n:0.75)$) [vertex] {};
 \node[draw=black,fill=none, label={[label distance=-1.5mm]85:$v_4$}] (V_{a2}) at ($(90-2*360/\n:0.75)$) [vertex] {}; 
\node[draw=black,fill=none,  label={[label distance=-1.5mm]95:$v_6$}] (V_{a3}) at ($(90-3*360/\n:0.75)$) [vertex] {};
 \node[draw=black,fill=none, label={[label distance=-1mm]above:$v$}] (V_{a4}) at ($(90-4*360/\n:0.75)$) [vertex] {};
    \foreach \x in {0,...,4} {
  
\draw[draw=black] (V_\x) -- (V_{a\x});
    }

\draw[draw=black](V_1)--(V_2);
\draw[draw=black](V_2)--(V_3);
\draw[draw=black](V_3)--(V_4);
\draw[draw=black](V_4)--(V_0);
\draw[draw=black](V_0)--(V_1);
\draw[draw=black](V_{a1})--(V_{a3});
\draw[draw=black](V_{a2})--(V_{a4});
\draw[draw=black](V_{a3})--(V_{a0});
\draw[draw=black](V_{a4})--(V_{a1});
\draw[draw=black](V_{a0})--(V_{a2});
\end{tikzpicture}
\captionof{figure}{}\label{messy2}
\end{center}
}
\opencutright
  \begin{cutout}{0}{0.75\linewidth}{0pt}{9}
\noindent vertices of $T'-V_1(T)$, and each has degree at least 3 in $T'$. Now $T'-V_1(T)$ is a tree on three vertices, and hence is a path $wxy$. Since $w$, $x$ and $y$ all have degree at least 3 in $T'$, there are distinct vertices $v_1,\dots ,v_5$ such that $w$ is adjacent to $v_1$ and $v_2$ in $T'$, $y$ is adjacent to $v_4$ and $v_5$ in $T'$, and $x$ is adjacent to $v_3$ in $T'$. Let $v_6$ be the remaining vertex in $N(v)$, and recall that $G[N[v]]$ is a complete subgraph of $G$ by \cref{complete}. Let $E$ be the set of edges that were contracted to obtain $T'$, and let $G':=G/E$. Then $\mathcal{P}\subseteq G'$ (see \cref{messy2}), contradicting (iv).
\end{cutout}

\begin{case}There is a skeleton $T$ of $C$ with $|V_4(T)|=2$ and with some $y\in V_2(T)$\textup{:}\end{case}
Let $w$ and $x$ be the vertices in $V_4(T)$. 

First, suppose that $y$ is in  $xTw$. Then by \cref{path}, there is a path $P$ of $G[N[C]]$ from $xTw$ to $T-xTw$ with no internal vertex in $T$. Let $a$ be the endpoint of $P$ in $xTw$ and let $b$ be the other endpoint. Without loss of generality, $w\notin V(xTb)$. Let $R:=(T\cup P)-\mathrm{int}(xTb)$. Then  $R$ is a skeleton of $C$ and $V_{\geq 3}(R)=\{x,w,a\}$, so we are in Case 2.

Suppose instead that $y$ is not in $xTw$. Without loss of generality, $y$ is in the component of $T-\mathrm{int}(xTw)$ containing $x$. Let $z$ be the leaf of $T$ such that $y$ is in $xTz$. By \cref{path}, there is a path $P$ of $G[N(C)]-\{x,z\}$ from $xTz$ to $T-xTz$ with no internal vertex in $T$. Let $a$ be the endpoint of $P$ in $xTz$ and let $b$ be the other endpoint. If $w\notin V(xTb)$ or $w=b$, then let $R:=(T\cup P)-\mathrm{int}(xTb)$. Otherwise, let $R:=(T\cup P)-\mathrm{int}(wTb)$. In either case, $R$ is a skeleton of $C$ and $V_{\geq 3}(R)=\{x,w,a\}$, so we are in Case 2.

\begin{case} There is a skeleton $T$ of $C$ with $|V_4(T)|=2$ and $V_2(T)=\emptyset$\textup{:}\end{case}
Since $T$ is a skeleton of $C$, $|V_1(T)|=|N(C)|\leq 6$. It then follows from (\ref{handshaking}) that $V(T)\setminus V_1(T)= V_4(T)$, and $|V_1(T)|=6$. We may assume that we are not in Case 1, so there is some vertex in $C-V_4(T)$. Since $C$ is connected, there is some vertex $y$ in $C-V_4(T)$ adjacent to some vertex $x$ in $V_4(T)$. Let $w$ be the other vertex of $V_4(T)$. By \cref{connect}, there is a path of $G-x$ from $y$ to $T$. Let $P$ be a vertex-minimal example of such a path, and note that $\mathrm{int}(P)$ is disjoint from $T$. Also, since $N(C)\subseteq V(T)$, every vertex of $P$ is in $N[C]$. Let $P'$ be the path formed from $P$ by adding $x$ and the edge $xy$, and let $b$ be the other endpoint of $P'$. 

Suppose that either $b=w$ or $w\notin V(bTx)$. Let $R:=(T\cup P')-\mathrm{int}(bTx)$. Then $R$ is a skeleton of $C$ with $|V_4(T)|=2$ and $y\in V_2(T)$, so we are in Case 3.

Suppose instead that $w\in \mathrm{int}(bTx)$. Note that $V(T)=\{x,w\}\cup V_1(T)$, and hence $xTw=xw$. Hence, by \cref{triangles}, $x$ and $w$ have at least five common neighbours. If some common neighbour $z$ of $x$ and $w$ is in $C$, then $R:=(T\cup wzx)-\mathrm{int}(xTw)$ is a skeleton of $C$ with $|V_4(R)|=2$ and $z\in V_2(R)$ and we are in Case 3. We may therefore assume that $N(x)\cap N(w)\subseteq N(C)$. Let $v_1, \dots , v_5$ be distinct vertices in $N(x)\cap N(w)$, and let $v_6$ be the remaining vertex of $N(C)$. Let $w_1$, $w_2$ and $w_3$ be distinct neighbours of $w$ in $\{v_1, \dots , v_6\}\setminus \{b\}$, with $w_1=v_6$ if possible. Since $\{v_1,\dots ,v_5\}\subseteq N(x)$ and at least one of $w$ and $x$ is adjacent to $v_6$, $x$ has two neighbours $x_1$ and $x_2$ in $\{v_1, \dots , v_6\}\setminus \{b, w_1,w_2,w_3\}$. Let $V(R):=\{x,w, v_1, \dots , v_6\}\cup V(P)$ and $E(R):=\{ww_1,ww_2,ww_3,xx_1,xx_2,xw\}\cup E(P')$. Then $R$ is a skeleton of $C$ with $V_4(R)=\{x,w\}$ and $y\in V_2(R)$, and we are in Case 3.

\begin{case}There is a skeleton $T$ of $C$ with exactly one vertex $x\in V_3(T)$ and exactly one vertex $w\in V_{\geq 4}(T)$\textup{:}\end{case}
Since $\deg_T(x)=3$ there are distinct leaves $v_1$ and $v_2$ such that $w\notin V(v_1Tv_2)$. Let $v_3,v_4,\dots ,v_k$ be the remaining leaves of $T$, where $k=|N(C)|$. Let $C'$ be the component of $C-w$ containing $x$, and note that $N(C')\subseteq N(C)\cup \{w\}$. Since $G$ is 4-connected by \cref{connect}, there is some vertex in $N(C')\cap(N(C)\setminus \{v_1,v_2\})$, and hence some path $P$ of $G[N[C]\setminus \{w,v_1,v_2\}]$ from $x$ to $N(C)\setminus \{v_1,v_2\}$. Let $P'$ be a subpath of $P$ of shortest possible length while having an endpoint $a$ in the component $T-w$ containing $x$ and an endpoint $b$ in some other component of $T-w$. Note that $P'\subseteq G[N[C]-\{w,v_1,v_2\}]$ and no internal vertex of $P'$ is in $T$.  Let $R:=(T\cup P')-\mathrm{int}(bTw)$, and note that $R$ is a skeleton of $C$. If $a\neq x$, then $V_{\geq 3}(R)=\{a,x,w\}$, and we are in Case 2. If $a=x$ and $w\in V_5(T)$, then $V_{4}(R)=\{x,w\}$, and we are in Case 3 or Case 4. Hence, we may assume $x=a$ and  $w\in V_4(T)$, meaning $|N(C)|=5$. We now consider two subcases, depending on whether $xw\in E(T)$.

\medskip
\textbf{Case 5a.} $wx\notin E(T)$:
\medskip

By \cref{path}, there is a path $Q$ of $G[N[C]]-\{x,w\}$ from $xTw$ to $T-xTw$ with no internal vertex in $T$. Let $c$ be the endpoint of $Q$ in $xTw$, and let $d$ be the other endpoint. 

Suppose first that $Q$ intersects $P'$. Let $Q'$ be the subpath of $Q$ from $c$ to $P'$ that is internally disjoint from $P'$, and let $d'$ be the endpoint of $Q'$ in $P'$. Let $S:=(R\cup Q')-\mathrm{int}(d'Rx)$. Then $S$ is a skeleton of $C$ with $V_{\geq 3}(S)=\{x,c,w\}$, and we are in Case 2.

Suppose instead that $Q$ is disjoint from $P'$. If $x\notin V(dTw)$, then let $S:=(T\cup Q)-\mathrm{int}(dTw)$. Otherwise, let $S:=(R\cup Q)-\mathrm{int}(dRx)$. Then $S$ is a skeleton of $C$ with $V_{\geq 3}(S)=\{x,c,w\}$, and we are in Case 2.

\medskip
\textbf{Case 5b.} $xTw=xw$:
\medskip

By \cref{triangles} applied to the edge $xw$, $|N(x)\cap N(w)|\geq 5$. 

Suppose there is some vertex $y\in (N(x)\cap N(w))\setminus N(C)$. If $y\in(N(x)\cap N(w))\setminus V(T)$, then let $S:=(T\cup xyw)-xw$. Then $S$ is a skeleton of $C$ with exactly one vertex $x\in V_3(S)$ and exactly one vertex $w\in V_{\geq 4}(S)$ and $xw\notin E(S)$, so we are in Case 5a. If $y\in N(x)\cap N(w)\cap V(xTv_i-v_i)$ for some $i\in \{1,2\}$, then let $S$ be the graph obtained from $R$ by adding the edge $wy$ and deleting the edge $wx$. If $y\in N(x)\cap N(w)\cap V(xTv_i-v_i)$ for some $i\in \{3,4,5\}$, then let $S$ be the graph obtained from $T$ by adding the edge $xy$ and deleting the edge $wx$. Then $S$ is a skeleton of $C$ with $V_{\geq 3}(S)=\{x,y,w\}$, and we are in Case 2.

Suppose instead that  $N(x)\cap N(w)\subseteq N(C)$. Since $|N(C)|=5$,  we have $N(x)\cap N(w)= N(C)$. We may assume we are not in Case 1, so by \cref{connect}, there is some vertex $y$ in $C-\{x,w\}$ adjacent to some vertex in $N(C)$. Since $\{x,w\}$ is complete to $N(C)$, assume without loss of generality that $v_5\in N(y)$. Since $C$ is connected, there is a path $Q$ of $C$ from $y$ to $\{w,x\}$. Choose $Q$ to be of shortest possible length, so that $\mathrm{int}(Q)$ is disjoint from $\{x,w\}$, and without loss of generality assume $x$ is an endpoint of $Q$ (since $\{x,w\}$ is complete to $N(C)$). Let $S$ be the skeleton with $V(S):=\{w,v_1,\dots ,v_5\}\cup V(Q)$ and $E(S):=\{wv_1,wv_2,wv_3,wx,xv_4, yv_5\}\cup E(Q)$. By \cref{path}, there is a path $Q'$ of $G[N[C]]-\{x,v_5\}$ from $xSv_5$ to $S-xSv_5$, internally disjoint from $S$. Let $c$ be the endpoint of $Q'$ in $xSv_5$ and let $d$ be the other endpoint. If $d\in \{v_1,v_2,v_3\}$, then let $S':=(S\cup Q')-dw$. Then $S'$ is a skeleton of $C$ with $V_{\geq3}(S')=\{w,x,c\}$, and we are in Case 2. If either $d=w$ and there is some vertex in $\mathrm{int}(Q')$, or $d=v_4$, then let $S':=(S\cup Q')-dx$. Then $S'$ is a skeleton of $C$ with exactly one vertex $c\in V_3(S')$ and exactly one vertex $w\in V_{\geq 4}(S')$, and $cw\notin E(S)$, so we are in Case 5a. If $d=w$ and there is no vertex in $\mathrm{int}(Q')$, then either $c\in N(x)\cap N(w)$, contradicting the assumption that $N(x)\cap N(w)\subseteq N(C)$, or $|V(ySc\cup Q')|<|V(Q)|$, contradicting our choice of $Q$.
\end{proof}

\begin{clm}\label{exactneighbourhood} If $v\in V_6(G)$ and $C$ is a component of $G-N[v]$, then $V(C)\neq \emptyset$ and $|N(C)|=4$.\end{clm}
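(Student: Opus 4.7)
The plan is first to dispose of the trivial part, then to rule out the two forbidden values of $|N(C)|$. For $V(C)\ne\emptyset$, since $|V(G)|\ge 10$ by \cref{minvertices} while $|N[v]|=7$, at least one component of $G-N[v]$ exists. For $|N(C)|=4$, \cref{connect} gives $|N(C)|\ge 4$, so I only need to exclude $|N(C)|\in\{5,6\}$. Both values reduce to the same situation: if $|N(C)|\ge 5$ then \cref{messy} forces $|V(C)|=1$, so writing $C=\{c\}$ we have $N(c)\subseteq N(v)$ with $|N(v)|=6$, while $\deg(c)\ge 6$ by \cref{triangles}; hence $N(c)=N(v)$. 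This rules out $|N(C)|=5$ and reduces the remaining work to deriving a contradiction in the case $C=\{c\}$ with $N(c)=N(v)$.

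In the remaining case, \cref{complete} applied to $v$ and to $c$ (both in $V_6(G)$) shows that $N[v]$ and $N[c]$ are each $K_7$'s, so $\{v,c\}\cup N(v)$ induces $K_8-vc$. Writing $N(v)=\{v_1,\dots,v_6\}$, since $|V(G)|\ge 10$ there are at least two further vertices in components of $G-N[v]$ distinct from $\{c\}$. I would split on the structure of those extra components to construct a $\mathcal{P}$-minor of $G$, contradicting (iv).

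In Case A, every component of $G-N[v]$ other than $\{c\}$ is a singleton; the twin argument for $c$ applies to each such singleton $\{c'\}$ and gives $N(c')=N(v)$, so a vertex count produces at least two more such singletons $c_1,c_2$. Then $\{v,c,c_1,c_2\}\cup N(v)$ induces $K_6+\overline{K_4}$ on $10$ vertices, which I would show contains $\mathcal{P}$ as a spanning subgraph via the Kneser model of the Petersen graph on the $2$-subsets of $\{1,\dots,5\}$: the four $2$-subsets containing a fixed element form an independent set in $\mathcal{P}$ and map to $\{v,c,c_1,c_2\}$, while the six remaining $2$-subsets induce only a perfect matching, which sits inside the $K_6$ on $N(v)$. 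In Case B, some component $C'\ne\{c\}$ has $|V(C')|\ge 2$, and then \cref{messy} and \cref{connect} force $|N(C')|=4$, say $N(C')=\{v_1,v_2,v_3,v_4\}$. I would apply \cref{add2} to $(N[C'],V(G-C'))$ to obtain a table $(X_1,\dots,X_6)$ of $G[N[C']]$ rooted at $\{v_1,\dots,v_4\}$, with $X_5,X_6\subseteq V(C')$ and edges $X_5\sim X_1,X_2,X_6$ and $X_6\sim X_3,X_4$. Contracting each $X_i$ in $G$ to a vertex $x_i$ yields a minor whose induced subgraph on $\{v,c,v_5,v_6,x_1,\dots,x_6\}$ contains a $K_6$ on $\{x_1,\dots,x_4,v_5,v_6\}$ (from the $N(v)$-clique), join edges from $v$ and from $c$ to this $K_6$, and the five table edges. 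I would then realise $\mathcal{P}$ by taking outer $5$-cycle $x_5\,x_1\,v\,v_5\,x_2$, inner $5$-cycle $x_6\,x_3\,v_6\,c\,x_4$, and spokes $x_5x_6$, $x_1c$, $vx_3$, $v_5x_4$, $x_2v_6$; each required edge comes from the $K_6$, from a twin join, or from a table edge.

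The main obstacle will be the explicit embedding of $\mathcal{P}$ in Case B, where the two ``off-clique'' vertices $x_5,x_6$ have to play specific roles in the two $5$-cycles so that the five edges supplied by \cref{add2} exactly furnish the non-$K_6$ adjacencies required. Without the table, we would be stuck with the $9$ vertices $\{v,c\}\cup N(v)$ supplied by the $K_8-vc$ structure, one short of any $\mathcal{P}$-minor, so exploiting $|V(C')|\ge 2$ to produce a table is essential.
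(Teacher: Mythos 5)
Your proof is correct and essentially mirrors the paper's argument: after using \cref{minvertices}, \cref{connect}, \cref{messy} and \cref{triangles} to reduce to a singleton component $c$ with $N(c)=N(v)$, you split on whether some other component is nontrivial, which is equivalent to the paper's split on whether some component has $|N(C')|=4$; in the nontrivial case both deploy \cref{add2} to build a table and contract it (together with the $K_7$ on $N[v]$ from \cref{complete}) into $\mathcal{P}$, and in the all-singletons case both find $\mathcal{P}$ inside $K_6+\overline{K_4}$. Your Kneser-graph description of the spanning Petersen in $K_6+\overline{K_4}$ is just a cleaner way of stating the explicit embedding the paper gives in its Figure~\ref{4neighbours}(b), and your explicit two-pentagon embedding in Case~B is a valid relabelling of the paper's Figure~\ref{4neighbours}(a).
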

\begin{proof}

By \cref{minvertices}, $V(G)\setminus N[v]$ is non-empty, so $V(C)\neq \emptyset$. Hence $|N(C)|\geq 4$ by \cref{connect}. Suppose for contradiction that $|N(C)|\geq 5$. Then $|V(C)|=1$ by \cref{messy}. Hence, by \cref{triangles}, $|N(C)|\geq 6$, so $N(C)=N(v)$.

Suppose that there is some component $C'$ of $G-N[v]$ with $|N(C')|=4$. By \cref{triangles}, $|V(C')|\geq 3$. Hence, by \cref{add2} with $A:=N[C']$ and $B:=V(G-C')$, there is a table $\mathcal{X}:=(X_1,\dots ,X_6)$ of $G[N[C']]$ rooted at $N(C')$. For $i\in\{1,2,3 ,4\}$, let $v_i$ be the unique vertex in $X_i\cap N(C')$. Let $v_5$ and $v_6$ be the remaining vertices of $N(v)$. By \cref{complete}, $G[N[v]]\cong K_7$. Let $G'$ be obtained from $G$ by contracting $G[X_i]$ to a single vertex for each $i\in\{1,2,\dots ,6\}$. Then $\mathcal{P}\subseteq G'$ (see \cref{4neighbours}a), contradicting (iv).

Suppose instead that every component $C'$ of $G-N[v]$ satisfies $|N(C')|\geq 5$. Then by \cref{messy} every component of $G-N[v]$ is an isolated vertex and by \cref{triangles} each component $C'$ of $G-N[v]$ satisfies $N(C')=N(v)$. Now by \cref{minvertices} there are at least three distinct components $C$, $C'$ and $C''$ of $G-N[v]$. Hence, by \cref{complete}, $\mathcal{P}\subseteq G$ (see \cref{4neighbours}b), contradicting (iv).  
\end{proof}

\begin{figure}[!htb]
\centering
\begin{tikzpicture}[line width=1pt,vertex/.style={circle,inner sep=0pt,minimum size=0.2cm}] 
    \pgfmathsetmacro{\n}{5}; 
    \pgfmathsetmacro{\m}{\n-1};

 \node[draw=white, fill=none, label=left: b)] (K) at ($(3.5,-1.2)$) [vertex] {};
\node[draw=white, fill=none, label=left: a)] (K) at ($(-2.5,-1.2)$) [vertex] {};
  \node[draw=black,fill=gray, label=above:$X_5$] (V_0) at ($(90-0*360/\n:1.5)$) [vertex] {};
 \node[draw=black,fill=gray, label=above:$X_6$] (V_1) at ($(90-1*360/\n:1.5)$) [vertex] {};
 \node[draw=black,fill=none, label=right:$X_4$] (V_2) at ($(90-2*360/\n:1.5)$) [vertex] {}; 
\node[draw=black,fill=none, label=left:$v$] (V_3) at ($(90-3*360/\n:1.5)$) [vertex] {};
 \node[draw=black,fill=none, label=above:$X_1$] (V_4) at ($(90-4*360/\n:1.5)$) [vertex] {};
  \node[draw=black,fill=none, label={[label distance=-3mm]87:$X_2$}] (V_{a0}) at ($(90-0*360/\n:0.75)$) [vertex] {};
 \node[draw=black,fill=none,  label={[label distance=-1.5mm]above:$X_3$}] (V_{a1}) at ($(90-1*360/\n:0.75)$) [vertex] {};
 \node[draw=black,fill=none, label={[label distance=-1.5mm]85:$u_5$}] (V_{a2}) at ($(90-2*360/\n:0.75)$) [vertex] {}; 
\node[draw=black,fill=none,  label={[label distance=-1.5mm]95:$u_6$}] (V_{a3}) at ($(90-3*360/\n:0.75)$) [vertex] {};
 \node[draw=black,fill=gray,  label={[label distance=-1mm]above:$C$}] (V_{a4}) at ($(90-4*360/\n:0.75)$) [vertex] {};
    \foreach \x in {0,...,4} {
  
\draw[draw=black] (V_\x) -- (V_{a\x});
    }

\draw[draw=black](V_1)--(V_2);
\draw[draw=black](V_2)--(V_3);
\draw[draw=black](V_3)--(V_4);
\draw[draw=black](V_4)--(V_0);
\draw[draw=black](V_0)--(V_1);
\draw[draw=black](V_{a1})--(V_{a3});
\draw[draw=black](V_{a2})--(V_{a4});
\draw[draw=black](V_{a3})--(V_{a0});
\draw[draw=black](V_{a4})--(V_{a1});
\draw[draw=black](V_{a0})--(V_{a2});
\end{tikzpicture}
\begin{tikzpicture}[line width=1pt,vertex/.style={circle,inner sep=0pt,minimum size=0.2cm}] 

    \pgfmathsetmacro{\n}{5}; 
    \pgfmathsetmacro{\m}{\n-1};

  \node[draw=black,fill=gray, label=above:$C$] (V_0) at ($(90-0*360/\n:1.5)$) [vertex] {};
 \node[draw=black,fill=none, label=above:$v_1$] (V_1) at ($(90-1*360/\n:1.5)$) [vertex] {};
 \node[draw=black,fill=gray, label=right:$C''$] (V_2) at ($(90-2*360/\n:1.5)$) [vertex] {}; 
\node[draw=black,fill=none, label=left:$v_3$] (V_3) at ($(90-3*360/\n:1.5)$) [vertex] {};
 \node[draw=black,fill=none, label=above:$v_4$] (V_4) at ($(90-4*360/\n:1.5)$) [vertex] {};
  \node[draw=black,fill=none,  label={[label distance=-3mm]87:$v_5$}] (V_{a0}) at ($(90-0*360/\n:0.75)$) [vertex] {};
 \node[draw=black,fill=none, label={[label distance=-1mm]above:$v_6$}] (V_{a1}) at ($(90-1*360/\n:0.75)$) [vertex] {};
 \node[draw=black,fill=none,  label={[label distance=-1.5mm]85:$v_2$}] (V_{a2}) at ($(90-2*360/\n:0.75)$) [vertex] {}; 
\node[draw=black,fill=none, label={[label distance=-1.5mm]95:$v$}] (V_{a3}) at ($(90-3*360/\n:0.75)$) [vertex] {};
 \node[draw=black,fill=gray, label={[label distance=-1mm]above:$C'$}] (V_{a4}) at ($(90-4*360/\n:0.75)$) [vertex] {};
    \foreach \x in {0,...,4} {
  
\draw[draw=black] (V_\x) -- (V_{a\x});
    }

\draw[draw=black](V_1)--(V_2);
\draw[draw=black](V_2)--(V_3);
\draw[draw=black](V_3)--(V_4);
\draw[draw=black](V_4)--(V_0);
\draw[draw=black](V_0)--(V_1);
\draw[draw=black](V_{a1})--(V_{a3});
\draw[draw=black](V_{a2})--(V_{a4});
\draw[draw=black](V_{a3})--(V_{a0});
\draw[draw=black](V_{a4})--(V_{a1});
\draw[draw=black](V_{a0})--(V_{a2});
\end{tikzpicture}
\caption{\label{4neighbours}}
\end{figure}

\cref{exactneighbourhood} and \cref{separations} immediately imply the following corollary, which we use in the final step of the proof, in \cref{endsec}.

\begin{cor}\label{6ex}For every vertex $v\in V_6(G)$, there is at least one $v$-suitable subgraph.\end{cor}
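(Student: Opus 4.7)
The plan is to deduce the corollary by combining \cref{exactneighbourhood} with \cref{separations} applied to the separation induced by a single component of $G-N[v]$. Given $v\in V_6(G)$, I would first invoke \cref{exactneighbourhood} to fix a component $C$ of $G-N[v]$; this lemma guarantees that $V(C)\neq\emptyset$ and that $|N(C)|=4$, so both quantities are well-controlled.

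Next I would set $A:=V(C)\cup N(C)$ and $B:=V(G)\setminus V(C)$ and verify that $(A,B)$ is a separation of $G$ of order $4\leq 6$: we have $A\setminus B=V(C)\neq\emptyset$ and $B\setminus A\supseteq\{v\}\neq\emptyset$, while every edge of $G$ lies either inside $V(C)$, inside $V(G)\setminus V(C)$, or between $V(C)$ and its neighbourhood $N(C)\subseteq A\cap B$, so $G=G[A]\cup G[B]$. The key observation is that since $C$ is a component of $G-N[v]$, every vertex of $N(C)$ lies in $N[v]$, and since $v\notin V(C)$ the vertex $v$ itself is not in $N(C)$; hence $A\cap B=N(C)\subseteq N(v)$ and $v\in B\setminus A$.

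Having checked these hypotheses, \cref{separations} supplies a vertex $u\in (A\setminus B)\cap\mathcal{L}=V(C)\cap\mathcal{L}$. Therefore $C$ is a component of $G-N[v]$ containing a vertex of $\mathcal{L}$, which is precisely the definition of a $v$-suitable subgraph, completing the proof. There is no substantive obstacle here: once \cref{exactneighbourhood} is in hand, the only thing to check is that the separation defined by $C$ meets the hypotheses of \cref{separations}, which is immediate.
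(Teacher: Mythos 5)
Your proof is correct and follows the same route the paper intends: the text immediately before Corollary~\ref{6ex} states that it follows from Claim~\ref{exactneighbourhood} and Lemma~\ref{separations}, which is exactly the combination you use (taking $A:=N[C]$, $B:=V(G)\setminus V(C)$ for a component $C$ of $G-N[v]$ and checking that $A\cap B=N(C)\subseteq N(v)$ with $v\in B\setminus A$).
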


We now prove the main result of this section.

\begin{lem}\label{6suit} If $v\in V_6(G)$ and $H$ is a $v$-suitable subgraph of $G$, then there is some $v$-suitable subgraph $H'$ of $G$ such that $N(H')\setminus N(H)\neq \emptyset$.\end{lem}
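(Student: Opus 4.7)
The plan is to argue by contradiction: assume every $v$-suitable subgraph $H'$ of $G$ satisfies $N(H')\subseteq N(H)$ and show this forces a very rigid structure, from which a Petersen minor of $G$ can be exhibited. Since $|N(H')|=|N(H)|=4$ by \cref{exactneighbourhood}, the assumption is equivalent to $N(H')=N(H)=:\{v_1,v_2,v_3,v_4\}$ for every $v$-suitable $H'$. Write $\{a,b\}:=N(v)\setminus\{v_1,v_2,v_3,v_4\}$.

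The first step is to promote this to a statement about every component, not just suitable ones. For each component $C$ of $G-N[v]$, take the separation $(A,B):=(N[C],\,V(G)\setminus V(C))$; it has order $|N(C)|=4\leq 6$ by \cref{exactneighbourhood}, and $v\in B\setminus A$ with $A\cap B=N(C)\subseteq N(v)$. So \cref{separations} gives $V(C)\cap\mathcal{L}\neq\emptyset$, meaning $C$ is $v$-suitable, and hence $N(C)=\{v_1,v_2,v_3,v_4\}$. Consequently no component has $a$ or $b$ in its neighbourhood, which means $N(a),N(b)\subseteq N[v]\setminus\{a\}$ and $N[v]\setminus\{b\}$, both of size $6$. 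Combined with $\delta(G)\geq 6$ from \cref{triangles}, this yields $\deg(a)=\deg(b)=6$ and $N[a]=N[b]=N[v]$, which by \cref{complete} is a $K_7$.

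The second step is to extract structural information about each component $C$. Every $c\in V(C)$ has $\deg_G(c)\geq 6$ with at most four neighbours in $\{v_1,v_2,v_3,v_4\}=N(C)$, so $|N(c)\cap V(C)|\geq 2$, forcing $|V(C)|\geq 3$. Since $G[C]$ is connected with at least three vertices and $|N(C)|=4$, \cref{add2} provides a table $(X_1,X_2,X_3,X_4,X_5,X_6)$ of $G[N[C]]$ rooted at $\{v_1,v_2,v_3,v_4\}$.

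The third and decisive step is to exhibit a Petersen minor in $G$, contradicting (iv). The ingredients are: the $K_7$ on $\{v,a,b,v_1,v_2,v_3,v_4\}$; the table on $C$, which after contracting $X_1,\dots,X_6$ yields two extra vertices $x_5,x_6$ with $x_5\sim x_1,x_2,x_6$ and $x_6\sim x_3,x_4$; and, if needed, a vertex extracted from a second component or from $V(C)\setminus V(T)$ where $T$ is the skeleton. From these pieces the plan is to define $10$ branch sets realising the adjacencies of $\mathcal{P}$.

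This final step is the main obstacle. A naive assignment $B_1=\{v\},B_2=\{a\},B_3=\{b\}$ with the other seven sets in $\{v_1,\dots,v_4\}\cup V_{\mathcal{C}}$ is doomed, because $\{v,a,b\}$ is non-adjacent to $V(G)\setminus N[v]$ and Petersen admits no $3$-vs-$3$ split across a $4$-vertex separator. To bypass this, at least one branch set must straddle the separator $\{v_1,v_2,v_3,v_4\}$, e.g.\ taking $B_1=\{v,v_1\}$ or absorbing some $X_i$ into the branch set carrying $v_i$, so that the Petersen edges crossing the separator can be realised by $G$-edges of the form $v_j c$ for $c\in V(C)$. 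A case analysis on the number of components and on the sizes $|X_i|$ then completes the construction: either a second component contributes the tenth branch set adjacent to $\{v_1,v_2,v_3,v_4\}$, or a surplus vertex of $C$ not covered by $T$ plays this role. In all cases the resulting $10$-branch-set family witnesses $\mathcal{P}$ as a minor of $G$, contradicting (iv) and thereby proving the lemma.
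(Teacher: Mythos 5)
Your steps 1--2 are sound: under the contradiction hypothesis, \cref{separations} (applied to $(N[C],V(G)\setminus V(C))$, which is legitimate since $|N(C)|=4$ by \cref{exactneighbourhood}) does make every component of $G-N[v]$ $v$-suitable, hence attached to exactly $N(H)=\{v_1,\dots,v_4\}$, forcing $N[a]=N[b]=N[v]$, $|V(C)|\geq 3$, and a table via \cref{add2}. The genuine gap is your third step, which you yourself call the main obstacle and then simply assert: no branch sets are ever exhibited, and ``a case analysis \dots completes the construction'' is not a proof. Worse, the construction cannot be completed from the ingredients you have put on the table. Consider the $10$-vertex graph consisting of two copies of $K_7$ sharing the clique $\{v_1,v_2,v_3,v_4\}$, i.e.\ $\{v,a,b,v_1,\dots,v_4\}$ complete and $\{c_1,c_2,c_3\}$ complete to itself and to $\{v_1,\dots,v_4\}$. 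This graph satisfies every conclusion you derive and every local property of $G$ you could invoke (minimum degree $6$, every edge in at least five triangles, no vertices of degree $7$ or $8$, $4$-connectivity, a single component of $G-N[v]$ with neighbourhood exactly $\{v_1,\dots,v_4\}$, a table), yet it has no $\mathcal{P}$-minor: a $\mathcal{P}$-minor in a $10$-vertex graph would be a spanning subgraph, and since $v,a,b$ have all their neighbours in $\{v,a,b,v_1,\dots,v_4\}$ and $\mathcal{P}$ has girth $5$, the edges from $\{v,a,b\}$ into $\{v_1,\dots,v_4\}$ would require five distinct endpoints (or force a $4$-cycle) in a set of size four. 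So the rigid structure alone does not force a Petersen minor; any completion of your route must bring in the global density conditions (ii)/(v) to control the size and richness of the components, and that is precisely the part of the argument you have not supplied.

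This is also where your approach diverges from the paper, which never builds a minor at this point. The paper shows that if two vertices $w,x\in N(v)$ satisfied $N[w]\subseteq N[v]$ and $N[x]\subseteq N[v]$, then $G-\{v,w,x\}$ would still have at least $5|V(G')|-11$ edges, hence by (v) would be a $(K_9,2)$-cockade minus at most two edges, which has many degree-$8$ vertices outside $N[v]$ and contradicts \cref{deg8}; therefore one of the two vertices of $N(v)\setminus N(H)$ has a neighbour outside $N[v]$, and \cref{separations,exactneighbourhood} make its component the desired $H'$. That counting/cockade argument is exactly the extra input your sketch is missing.
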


\begin{proof}
By \cref{exactneighbourhood}, $|N(H)|=4$.
Suppose for contradiction that there exist distinct vertices $w,x\in N(v)$ such that $N[x]\subseteq N[v]$ and $N[w]\subseteq N[v]$. Let $G':=G-\{v,w,x\}$. By (ii), 
$$|E(G')|\geq |E(G)|-3-3(4)\geq (5|V(G)|-11)-15=5|V(G')|-11.$$
By (v), $G'$ is a $(K_9,2)$-cockade minus at most two edges. 
Every $(K_9,2)$-cockade has at least nine vertices of degree exactly 8, so $|V_8(G')|\geq 5$. 
 Then some vertex in $V(G')\setminus N[v]$ has degree exactly 8 in $G$, contradicting \cref{deg8}.

Hence there is at most one vertex $w$ in $N(v)$ such that $N[w]\subseteq N[v]$, so there is some vertex $x$ in $N(v)\setminus N(H)$ with some neighbour $y$ in $G-N[v]$. Let $H'$ be the component of $G-N[v]$ that contains $y$. The vertex $x$ is in $N(H')$, so $N(H')\setminus N(H)\neq \emptyset$.  By \cref{exactneighbourhood} and \cref{separations}, $H'$ is $v$-suitable, as required.
\end{proof}

\section{Degree 9 Vertices}\label{9sec}
 
In this section, we focus on vertices in $V_9(G)\cap \mathcal{L}$. For each such vertex $v$, the minimum degree of $G[N(v)]$ is at least 5, by \cref{triangles} applied to each edge incident to $v$. Let $H_v$ be the complement of an edge-minimal spanning subgraph of $G[N(v)]$ with minimum degree 5. 

 
 The main result of this section, \cref{9suit}, states that for each component $C$ of $G-N[v]$, there is some $v$-suitable subgraph $C'$ with a neighbour not in the neighbourhood of $C$. We argue for this claim directly when each component $C'$ of $G-N[v]$ has $|N(C')|=4$. Otherwise, we first look at the case where the maximum distance between two vertices of degree 3 in $H_v$ is at most 2. Then we consider the case where there are two vertices of degree 3 at distance at least 3 in $H_v$. A useful technique is that a graph obtained by contracting some edge in $G[N(v)]$ must violate some condition of \cref{deg8lem}.

\begin{clm}\label{clique} If $v\in V_9(G)\cap \mathcal{L}$, then $\Delta(H_v)=3$ and the vertices of $H_v$ with degree at most 2 form a clique.\end{clm}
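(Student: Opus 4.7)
The plan is to derive both statements directly from the edge-minimality of the subgraph $H'$ underlying the definition of $H_v$. By \cref{triangles} applied to each edge $vw$ with $w\in N(v)$, we have $\delta(G[N(v)])\geq 5$, so an edge-minimal spanning subgraph $H'\subseteq G[N(v)]$ with $\delta(H')\geq 5$ exists, and $H_v=\overline{H'}$ is a graph on $9$ vertices. The crucial observation, which I expect to do essentially all the work, is that for any edge $e=xy\in E(H')$, the graph $H'-e$ must violate $\delta\geq 5$ (by edge-minimality), so at least one of $x,y$ has degree exactly $5$ in $H'$.

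To show $\Delta(H_v)=3$, I would use that $\deg_{H_v}(w)=8-\deg_{H'}(w)$ for each $w\in N(v)$. The inequality $\delta(H')\geq 5$ immediately yields $\Delta(H_v)\leq 3$. For the matching lower bound, the handshake lemma gives $|E(H')|\geq\lceil 9\cdot 5/2\rceil=23>0$, so $H'$ has at least one edge. By the key observation, that edge has an endpoint of degree exactly $5$ in $H'$, which corresponds to a vertex of degree exactly $3$ in $H_v$. Hence $\Delta(H_v)=3$.

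For the clique property, a vertex has degree at most $2$ in $H_v$ if and only if it has degree at least $6$ in $H'$. Suppose $w_1$ and $w_2$ are two such vertices. If $w_1w_2$ were an edge of $H'$, both of its endpoints would have degree $\geq 6$ in $H'$, contradicting the edge-minimality observation that $w_1w_2$ must be incident to a degree-$5$ vertex. Therefore $w_1w_2\notin E(H')$, equivalently $w_1w_2\in E(H_v)$, so the set of vertices of degree at most $2$ in $H_v$ is a clique.

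I do not anticipate any real obstacle here: the entire claim is, in effect, an unpacking of what edge-minimality of $H'$ means, together with the degree lower bound from \cref{triangles}. No structural properties of $G$ specific to the Petersen-minor setting are needed.
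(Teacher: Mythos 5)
Your proof is correct and takes essentially the same approach as the paper: both hinge on the degree bound $\delta(\overline{H_v})\geq 5$ (giving $\Delta(H_v)\leq 3$) and on edge-minimality of $\overline{H_v}$ (giving the clique property, since deleting an edge between two vertices of degree $\geq 6$ in $\overline{H_v}$ would preserve minimum degree $5$). The only cosmetic difference is how the existence of a degree-$3$ vertex in $H_v$ is obtained: you take any edge of $\overline{H_v}$ and invoke your explicit ``every edge has a degree-$5$ endpoint'' observation, while the paper notes that the clique of low-degree vertices has size at most $3$, so at least six of the nine vertices have degree exactly $3$; both are equally short.
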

\begin{proof}
Since $|V(H_v)|=|N(v)|=9$,  if a vertex $u$ has degree greater than 3 in $H_v$, then $u$ has degree less than 5 in $\overline{H_v}$, a contradiction. If two non-adjacent vertices $x$ and $y$ in $H_v$ both have degree at most 2 in $H_v$, then $\overline{H_v}- xy$ is a spanning subgraph of $G[N(v)]$ with minimum degree at least 5, contradicting the definition of $H_v$. Thus the vertices of degree at most 2 form a clique of size at most 3, so there is indeed a vertex of degree 3 in $H_v$.\end{proof}

The following claim guarantees that $|V(G)|\geq 11$ if we find a vertex $v\in V_9(G)\cap \mathcal{L}$, and hence that the components of $G-N[v]$ are non-empty.

\begin{clm}\label{11vertices}If $v\in V_9(G)\cap \mathcal{L}$, then $V(G-N[v])\neq \emptyset$.\end{clm}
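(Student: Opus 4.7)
The plan is to argue by contradiction, leveraging the computational observation already invoked in the proof of \cref{triangles}, namely that $\mathcal{P}$ is a subgraph of every $10$-vertex graph with at most six non-edges.

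Suppose for contradiction that $V(G-N[v]) = \emptyset$. Since $\deg(v) = 9$, every other vertex of $G$ lies in $N(v)$, so $V(G) = N[v]$ and in particular $|V(G)| = 10$. The idea is that $10$ vertices combined with the edge-density bound (ii) forces $G$ to be very close to $K_{10}$, which is dense enough to contain $\mathcal{P}$, contradicting (iv).

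Concretely, I would apply (ii) to get
\[
|E(G)| \;\geq\; 5|V(G)| - 11 \;=\; 5 \cdot 10 - 11 \;=\; 39,
\]
so the number of non-edges of $G$ is at most $\binom{10}{2} - 39 = 6$. By the computational fact recorded in the proof of \cref{triangles} (that every $10$-vertex graph with at most six non-edges has $\mathcal{P}$ as a subgraph), we conclude $\mathcal{P} \subseteq G$, contradicting (iv). Hence $V(G-N[v]) \neq \emptyset$, as required.

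The only conceivable obstacle is ensuring the quoted $10$-vertex result genuinely applies here, but that is precisely the content already cited earlier in the paper, so nothing new is needed. In particular, we do not need any information about $\mathcal{L}$ or about $H_v$ beyond $\deg(v) = 9$; the claim is really a consequence of the minimum-vertex-count bound \cref{minvertices} combined with density (ii) and the computational check from \cref{triangles}.
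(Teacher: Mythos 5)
Your proof is correct, but it takes a genuinely different route from the paper's. The paper argues structurally: since $\mathcal{P}\not\subseteq G[N[v]]$ by (iv), $G[N[v]]\not\cong K_{10}$, so some $w\in N(v)$ has $N[w]\neq N[v]$; then the defining property of $\mathcal{L}$ (no vertex $u$ with $N[u]\subsetneq N[v]$) forces $N[w]\not\subseteq N[v]$, yielding a vertex outside $N[v]$. You instead observe that $V(G-N[v])=\emptyset$ forces $|V(G)|=10$, then apply the density bound (ii) to get at most six non-edges and invoke the computational check already recorded in the proof of \cref{triangles}. Your argument is in fact slightly stronger, since it works for any $v\in V_9(G)$ rather than only $v\in V_9(G)\cap\mathcal{L}$, and is arguably more direct; the trade-off is that it rests on the computer verification, whereas the paper's argument is purely structural and uses exactly the $\mathcal{L}$-hypothesis the claim provides. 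Both are valid.
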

\begin{proof}
By (iv), $\mathcal{P}\not\subseteq G[N[v]]$, so $G[v]\not\cong K_{10}$. Hence, there is some vertex $w\in N(v)$ such that $N[w]\neq N[v]$. By the definition of $\mathcal{L}$, there is some vertex $x\in N[w]\setminus N[v]$ and $x\in V(G-N[v])$.
\end{proof}

A graph is {\it cubic} if every vertex has degree exactly 3.

\begin{clm}\label{dist3}
If $v\in V_9(G)\cap \mathcal{L}$, then there are vertices $x$ and $y$ in $V_3(H_v)$ such that $\dist_{H_v}(x,y)\geq 3$, 
unless either $|N(C)|=4$ for  every component $C$ of $G- N[v]$ or $H_v\cong K_{3,3}\dot{\cup} K_3$.\end{clm}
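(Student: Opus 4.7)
Suppose for contradiction that every pair of vertices in $V_3(H_v)$ has $H_v$-distance at most $2$, that some component $C$ of $G-N[v]$ has $|N(C)|\geq 5$, and that $H_v\not\cong K_{3,3}\dot{\cup} K_3$. The plan is to produce a $\mathcal{P}$-minor in $G$, contradicting (iv), via \cref{deg8lem}.

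Pick an edge $u_1u_2 \in E(G[N(v)])$ (equivalently, a non-edge of $H_v$) and set $G':=G/u_1u_2$. Then $\deg_{G'}(v)=8$, and the image of $C$ in $G'$ has at least $|N(C)|-1\geq 4$ neighbors. Provided the degree hypothesis of \cref{deg8lem} holds for $G'$, the inequality $|N_{G'}(C)|\geq 4$ forces its condition~(3) to fail, so $\mathcal{P}$ is a minor of $G'$, hence of $G$. By \cref{triangles}, $|N_G(v')\cap N(v)|\geq 5$ for every $v'\in N(v)$; contraction decreases this count by at most one for each $v' \in N(v) \setminus \{u_1, u_2\}$, and only when $v'$ is $G$-adjacent to both $u_1$ and $u_2$, while the merged vertex has at least $5$ common neighbors with $v$ whenever $|(N_G(u_1) \cup N_G(u_2))\cap N(v)|\geq 7$. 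Translating these via the complement $\overline{H_v}$, a suitable non-edge $u_1u_2$ is characterised by the two combinatorial conditions $|N_{H_v}(u_1) \cap N_{H_v}(u_2)|\leq 2$ and $V_3(H_v)\setminus\{u_1, u_2\}\subseteq N_{H_v}(u_1)\cup N_{H_v}(u_2)$.

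It therefore suffices to prove that such a non-edge exists whenever $H_v\not\cong K_{3,3}\dot{\cup} K_3$. Let $L$ be the set of vertices of $H_v$ of degree at most $2$; by \cref{clique}, $L$ is a clique, so $|L| \leq 3$, and a parity argument on the degree sum in $H_v$ rules out $|L|=0$ (there is no $3$-regular graph on nine vertices). When $|L|=3$, the three vertices of $L$ each have $H_v$-degree exactly $2$ and form an isolated $K_3$; the other six vertices induce a cubic graph on $6$ vertices, which must be either $K_{3,3}$ (the excepted case) or the triangular prism, the latter admitting a suitable non-edge between its two triangles. When $|L|\in\{1,2\}$, the degree pattern of $H_v$, combined with the diameter-$2$ hypothesis on $V_3(H_v)$, restricts $H_v$ to a short list, and in each case either a suitable non-edge is exhibited or two degree-$3$ vertices are found at $H_v$-distance $\geq 3$ (satisfying the claim directly).

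The main obstacle is this case analysis: systematically enumerating the admissible structures of $H_v$ and, in each non-excepted case, exhibiting a non-edge satisfying both combinatorial conditions. The excepted structure $K_{3,3}\dot{\cup} K_3$ is precisely the case in which every non-edge of $H_v$ violates at least one of these conditions (non-edges within a $K_{3,3}$-class have common $H_v$-neighborhood equal to the opposite class of size $3$, while non-edges incident to the $K_3$ factor fail to cover $V_3(H_v)$), so no contraction-based argument yields a $\mathcal{P}$-minor, consistent with the statement.
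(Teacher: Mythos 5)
Your reduction to a single contraction plus \cref{deg8lem} is sound where it applies: your conditions (a) $|N_{H_v}(u_1)\cap N_{H_v}(u_2)|\leq 2$ and (b) $V_3(H_v)\setminus\{u_1,u_2\}\subseteq N_{H_v}(u_1)\cup N_{H_v}(u_2)$ are exactly what is needed for the degree hypothesis of \cref{deg8lem} to survive contracting the $G$-edge $u_1u_2$, and they do recover the paper's $|S|=3$ (prism) and $|S|=2$ cases. But the plan cannot work when $|L|=1$, i.e.\ when $H_v$ has exactly eight vertices of degree~3. In that case you need to cover at least six degree-3 vertices by $N_{H_v}(u_1)\cup N_{H_v}(u_2)$, which has at most six elements; equality would force $u_1,u_2\in V_3(H_v)$ with disjoint $H_v$-neighbourhoods, but $u_1u_2$ is a non-edge of $H_v$ and the standing assumption $\dist_{H_v}(x,y)\leq 2$ for all $x,y\in V_3(H_v)$ then forces a common neighbour, so $|N_{H_v}(u_1)\cup N_{H_v}(u_2)|\leq 5$; and taking one of $u_1,u_2$ to be the unique low-degree vertex only makes the union smaller (at most $3+2=5$) while leaving seven vertices to cover. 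So no admissible non-edge exists. Concretely, for the graph in the paper's Case~3 (a triangle $v_1v_2v_3$ with $v_4$ hanging off, $v_4v_7,v_4v_8,v_2v_5,v_3v_6,v_5v_7,v_5v_8,v_6v_7,v_6v_8$, and an isolated $x$) one can check by inspection that every non-edge violates (a) or (b).

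The paper handles precisely these $|S|=1$ subcases (Cases~3--5 of its proof) by a different mechanism: it pins down $H_v$ explicitly (the graph above, the Wagner graph plus an isolated vertex, or a one-edge subdivision of one of these) and then exhibits $\mathcal{P}$ directly as a subgraph of $G[N[v]]$, using only the guaranteed edges $\overline{H_v}\subseteq G[N(v)]$; no component $C$ and no appeal to \cref{deg8lem} is needed there. Your approach would need to be supplemented with such direct embeddings to close the $|S|=1$ gap, so as written the proposal does not yield a complete proof.
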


\begin{proof}
Suppose for contradiction that $\dist_{H_v}(x,y)\leq 2$ whenever $\{x,y\}\subseteq V_3(H_v)$, there is some component $C$ of $G- N[v]$ such that $|N(C)|\neq 4$ and $H_v\ncong K_{3,3}\dot{\cup} K_3$. By \cref{11vertices}, $V(C)\neq \emptyset$, so  by \cref{connect}, $|N(C)|\geq 5$. Let $S:=V_0(H_v)\cup V_1(H_v)\cup V_2(H_v)$. By \cref{clique}, $S$ is a clique, so $|S|\leq 3$. Since $|V(H_v)|=9$, the number of vertices of odd degree in $H_v$ is even and $V(H_v)\setminus S=V_3(H_v)$, we have $S\neq \emptyset$. We consider five cases depending on $S$ and whether there is any triangle in $H_v$.
\begin{case5}$|S|=3$\textup{:}\end{case5}
In this case, $S=V_2(H_v)$ and $H_v[S]\cong K_3$, and there is no edge in $H_v$ from a vertex in $S$ to a vertex not in $S$. Hence, $H_v-S$ is a 6-vertex cubic graph. By assumption, $H_v\ncong K_{3,3}$. There is only one other 6-vertex cubic graph, so $H_v$ is the graph depicted in \cref{dist31}. 
Then $\mathcal{P}\subseteq G[N[v]]$ (see \cref{dist31}b), contradicting (iv).

\begin{figure}[htb]
\centering
\begin{tikzpicture}[line width=1pt,vertex/.style={circle,inner sep=0pt,minimum size=0.2cm}] 

    \pgfmathsetmacro{\n}{1.3}; 
  \node[draw=white, fill=none, label=left: b)] (K) at ($(6.5*\n,0)$) [vertex] {};
\node[draw=white, fill=none, label=left: a)] (K) at ($(-0.5*\n,0)$) [vertex] {};
  \node[draw=black,fill=gray, label=above:$a_1$] (V_0) at ($(0*\n,1*\n)$) [vertex] {};
 \node[draw=black,fill=gray, label=above:$b_1$] (V_1) at ($(2*\n,1*\n)$) [vertex] {};
 \node[draw=black,fill=gray, label=above:$c_1$] (V_2) at ($(3.5*\n,1*\n)$) [vertex] {}; 
\node[draw=black,fill=gray, label=below:$a_2$] (V_3) at ($(0.6*\n,0.5*\n)$) [vertex] {};
 \node[draw=black,fill=gray, label=below:$b_2$] (V_4) at ($(1.4*\n,0.5*\n)$) [vertex] {};
  \node[draw=black,fill=gray, label=below:$a_3$] (V_5) at ($(0*\n,0*\n)$) [vertex] {};
 \node[draw=black,fill=gray, label=below:$b_3$] (V_6) at ($(2*\n,0*\n)$) [vertex] {};
 \node[draw=black,fill=gray, label=below:$c_2$] (V_7) at ($(3*\n,0*\n)$) [vertex] {}; 
\node[draw=black,fill=gray, label=below:$c_3$] (V_8) at ($(4*\n,0*\n)$) [vertex] {};

\draw[draw=black](V_0)--(V_1);
\draw[draw=black](V_2)--(V_8);
\draw[draw=black](V_2)--(V_7);
\draw[draw=black](V_7)--(V_8);
\draw[draw=black](V_0)--(V_3);
\draw[draw=black](V_0)--(V_5);
\draw[draw=black](V_3)--(V_4); 
\draw[draw=black](V_1)--(V_4);
\draw[draw=black](V_5)--(V_6);
\draw[draw=black](V_1)--(V_6);
\draw[draw=black](V_4)--(V_6);
\draw[draw=black](V_3)--(V_5);
\end{tikzpicture}
\hspace{5mm}
\begin{tikzpicture}[line width=1pt,vertex/.style={circle,inner sep=0pt,minimum size=0.2cm}] 

    \pgfmathsetmacro{\n}{5}; 
    \pgfmathsetmacro{\m}{\n-1};

  \node[draw=black,fill=gray, label=above:$a_3$] (V_0) at ($(90-0*360/\n:1.5)$) [vertex] {};
 \node[draw=black,fill=gray, label=above:$c_2$] (V_1) at ($(90-1*360/\n:1.5)$) [vertex] {};
 \node[draw=black,fill=gray, label=right:$a_2$] (V_2) at ($(90-2*360/\n:1.5)$) [vertex] {}; 
\node[draw=black,fill=gray, label=left:$b_3$] (V_3) at ($(90-3*360/\n:1.5)$) [vertex] {};
 \node[draw=black,fill=gray, label=above:$c_1$] (V_4) at ($(90-4*360/\n:1.5)$) [vertex] {};
  \node[draw=black,fill=gray,  label={[label distance=-3mm]87:$b_1$}] (V_{a0}) at ($(90-0*360/\n:0.75)$) [vertex] {};
 \node[draw=black,fill=gray, label={[label distance=-1mm]above:$b_2$}] (V_{a1}) at ($(90-1*360/\n:0.75)$) [vertex] {};
 \node[draw=black,fill=gray, label={[label distance=-1.5mm]85:$v$}] (V_{a2}) at ($(90-2*360/\n:0.75)$) [vertex] {}; 
\node[draw=black,fill=gray, label={[label distance=-1.5mm]95:$c_3$}] (V_{a3}) at ($(90-3*360/\n:0.75)$) [vertex] {};
 \node[draw=black,fill=gray, label={[label distance=-1mm]above:$a_1$}] (V_{a4}) at ($(90-4*360/\n:0.75)$) [vertex] {};
    \foreach \x in {0,...,4} {
  
\draw[draw=black] (V_\x) -- (V_{a\x});
    }

\draw[draw=black](V_1)--(V_2);
\draw[draw=black](V_2)--(V_3);
\draw[draw=black](V_3)--(V_4);
\draw[draw=black](V_4)--(V_0);
\draw[draw=black](V_0)--(V_1);
\draw[draw=black](V_{a1})--(V_{a3});
\draw[draw=black](V_{a2})--(V_{a4});
\draw[draw=black](V_{a3})--(V_{a0});
\draw[draw=black](V_{a4})--(V_{a1});
\draw[draw=black](V_{a0})--(V_{a2});
\end{tikzpicture}
\caption{\label{dist31}}
\end{figure}

\begin{case5}$|S|=2$\textup{:}\end{case5}
Since $|V(H_v)|$ is odd, there are an odd number of vertices of even degree in $H_v$. Since $S$ is a clique, $\delta(H_v)\geq 1$. Hence, by \cref{clique}, there is a unique vertex $x\in V_2(H_v)$, and since $|S|=2$, there is some vertex $v_1\in V_3(H_v)$ adjacent to $x$ in $H_v$. Let $v_2$ and $v_3$ be the other neighbours of $v_1$ in $H_v$, and note that $\{v_2,v_3\}\subseteq V_3(H_v)$. Since $\dist_{H_v}(v_1,y)\leq 2$ for every vertex $y$ in $V_3(H_v)$, each of the four remaining vertices of $H_v-S$ is adjacent to $\{v_2,v_3\}$. Since  $v_2$ and $v_3$ each have only three neighbours in $H_v$, $v_2v_3\notin E(H_v)$. Let $G'$ be obtained from $G$ by deleting every edge in $G\cap H_v$ and then contracting $v_2v_3$. Now $v\in V_8(G')$. Let $v'$ be a vertex in $N_{G'}(v)$. If $v'\in S$, then $|N_{G'}(v')\cap N_{G'}(v)|\geq 8-\deg_{H_v}(v')-1\geq 5$. If $v'$ is in $H_v-(S\cup \{v_2,v_3\})$, then $|N_{G'}(v')\cap N_{G'}(v)|= 8-\deg_{H_v}(v')= 5$. If $v'$ is the new vertex of $G'$, then $|N_{G'}(v')\cap N_{G'}(v)|=8-|N_{H_v}(v_2)\cap N_{H_v}(v_3)|-1=6$. Hence, $|N_{G'}(v')\cap N_{G'}(v)|\geq 5$ for any vertex $v'\in N_{G'}(v)$. Finally, $|N_{G'}(C)|\geq |N_G(C)|-1\geq 4$, so $G'[N_{G'}(C)]\ncong K_3$. Hence $\mathcal{P}$ is a minor of $G$ by \cref{deg8lem}, contradicting (iv).

\begin{case5}There is some triangle $v_1v_2v_3$ of $H_v$ and $S=V_0(H_v)=\{x\}$\textup{:}\end{case5}
Let $\{v_4,v_5,\dots ,v_8\}$ be the other vertices of $H_v$, where $v_4v_1\in E(H_v)$. For every vertex $y$ in $H_v-S$ we have $\dist_{H_v}(v_1,y)\leq 2$ by assumption, so $y$ is either adjacent to $v_1$ or adjacent to a neighbour of $v_1$. Since $\{v_2,v_3,v_4\}\subseteq V_3(H_v)$, we may assume without loss of generality that $\{v_2v_5,v_3v_6,v_4v_7,v_4v_8\}\subseteq E(H_v)$. Since $\Delta(H_v)=3$ and $\dist_{H_v}(v_i,v_j)\leq 2$ for $i\in \{2,3\}$ and $j\in \{7,8\}$, $H_v$ is the graph depicted in \cref{dist32}a. Then $\mathcal{P}\subseteq G[N_G[v]]$ (see \cref{dist32}b), contradicting (iv).

\begin{figure}[htb]
\centering
\begin{tikzpicture}[line width=1pt,vertex/.style={circle,inner sep=0pt,minimum size=0.2cm}] 

    \pgfmathsetmacro{\n}{2}; 

  \node[draw=white, fill=none, label=left: b)] (K) at ($(3.5*\n,0)$) [vertex] {};
\node[draw=white, fill=none, label=left: a)] (K) at ($(-0.5*\n,0)$) [vertex] {};
  \node[draw=black,fill=gray, label=above:$v_2$] (V_0) at ($((0.5*\n,1*\n)$) [vertex] {};
 \node[draw=black,fill=gray, label=above:$v_3$] (V_1) at ($(1.5*\n,1*\n)$) [vertex] {};
 \node[draw=black,fill=gray, label=above:$v_1$] (V_2) at ($(1*\n,0.7*\n)$) [vertex] {}; 
\node[draw=black,fill=gray, label=below:$v_5$] (V_3) at ($(0*\n,0.5*\n)$) [vertex] {};
 \node[draw=black,fill=gray, label=below:$v_4$] (V_4) at ($(1*\n,0.5*\n)$) [vertex] {};
  \node[draw=black,fill=gray, label=below:$v_6$] (V_5) at ($(2*\n,0.5*\n)$) [vertex] {};
 \node[draw=black,fill=gray, label=below:$v_7$] (V_6) at ($(0.5*\n,0*\n)$) [vertex] {};
 \node[draw=black,fill=gray, label=below:$v_8$] (V_7) at ($(1.5*\n,0*\n)$) [vertex] {}; 
\node[draw=black,fill=gray, label=below:$x$] (V_8) at ($(2*\n,0*\n)$) [vertex] {};

\draw[draw=black](V_0)--(V_1);
\draw[draw=black](V_0)--(V_2);
\draw[draw=black](V_1)--(V_2);
\draw[draw=black](V_0)--(V_3);
\draw[draw=black](V_1)--(V_5);
\draw[draw=black](V_2)--(V_4);
\draw[draw=black](V_3)--(V_6); 
\draw[draw=black](V_3)--(V_7);
\draw[draw=black](V_4)--(V_6);
\draw[draw=black](V_4)--(V_7);
\draw[draw=black](V_5)--(V_6);
\draw[draw=black](V_5)--(V_7);
\end{tikzpicture}
\hspace{5mm}
\begin{tikzpicture}[line width=1pt,vertex/.style={circle,inner sep=0pt,minimum size=0.2cm}] 

    \pgfmathsetmacro{\n}{5}; 
    \pgfmathsetmacro{\m}{\n-1};

  \node[draw=black,fill=gray, label=above:$v_6$] (V_0) at ($(90-0*360/\n:1.5)$) [vertex] {};
 \node[draw=black,fill=gray, label=above:$x$] (V_1) at ($(90-1*360/\n:1.5)$) [vertex] {};
 \node[draw=black,fill=gray, label=right:$v_8$] (V_2) at ($(90-2*360/\n:1.5)$) [vertex] {}; 
\node[draw=black,fill=gray, label=left:$v_3$] (V_3) at ($(90-3*360/\n:1.5)$) [vertex] {};
 \node[draw=black,fill=gray, label=above:$v$] (V_4) at ($(90-4*360/\n:1.5)$) [vertex] {};
  \node[draw=black,fill=gray, label={[label distance=-3mm]87:$v_1$}] (V_{a0}) at ($(90-0*360/\n:0.75)$) [vertex] {};
 \node[draw=black,fill=gray, label={[label distance=-1mm]above:$v_4$}] (V_{a1}) at ($(90-1*360/\n:0.75)$) [vertex] {};
 \node[draw=black,fill=gray, label={[label distance=-1.5mm]85:$v_7$}] (V_{a2}) at ($(90-2*360/\n:0.75)$) [vertex] {}; 
\node[draw=black,fill=gray, label={[label distance=-1.5mm]95:$v_5$}] (V_{a3}) at ($(90-3*360/\n:0.75)$) [vertex] {};
 \node[draw=black,fill=gray, label={[label distance=-1mm]above:$v_2$}] (V_{a4}) at ($(90-4*360/\n:0.75)$) [vertex] {};
    \foreach \x in {0,...,4} {
  
\draw[draw=black] (V_\x) -- (V_{a\x});
    }

\draw[draw=black](V_1)--(V_2);
\draw[draw=black](V_2)--(V_3);
\draw[draw=black](V_3)--(V_4);
\draw[draw=black](V_4)--(V_0);
\draw[draw=black](V_0)--(V_1);
\draw[draw=black](V_{a1})--(V_{a3});
\draw[draw=black](V_{a2})--(V_{a4});
\draw[draw=black](V_{a3})--(V_{a0});
\draw[draw=black](V_{a4})--(V_{a1});
\draw[draw=black](V_{a0})--(V_{a2});
\end{tikzpicture}
\caption{\label{dist32}}
\end{figure}

\begin{case5}There is no triangle of $H_v$ and $S=V_0(H_v)=\{x\}$\textup{:}\end{case5}
So $H_v-x$ is a cubic, triangle-free graph, with diameter 2 and exactly eight vertices. We now show that there is exactly one such graph, namely the Wagner graph. Let $v_1$ be a vertex of $H_v-S$, and let $v_2$, $v_3$ and $v_4$ be its neighbours in $H_v$. Since $H_v$ contains no triangle, $\{v_2,v_3,v_4\}$ is an independent set in $H_v$. Let $\{v_5,v_6,v_7,v_8\}$ be the remaining vertices of $H_v-S$. If $v_2$, $v_3$ and $v_4$ all share some common neighbour, say $v_5$, in $H_v$, then there are six edges in $H_v[\{v_1,\dots ,v_5\}]$, and at most three other edges in $H_v$ incident to some vertex in $\{v_1,\dots ,v_5\}$. By the handshaking lemma, $E(H_v-S)=E(H_v)=12$, since $S=V_0(H_v)$ and $V(H_v-S)=V_3(H_v)$. Hence $v_6v_7v_8$ is a triangle of $H_v$, a contradiction. If for every pair $i,j\in \{2,3,4\}$ $v_i$ and $v_j$ share a neighbour in $H_v$ distinct from $v_1$, then $|N_{H_v}[v_2]\cup N_{H_v}[v_3]\cup N_{H_v}[v_4]|\leq 3(4)-3(2)+1=7$ by inclusion-exclusion, contradicting the assumption that $\dist_{H_v}(v_1,y)$ for each of the 8 vertices $y$ in $V_3(H_v)$. Hence, without loss of generality, $v_2$ and $v_3$ have no common neighbour in $H_v$, and $\{v_2v_5,v_2v_6,v_3v_7,v_3v_8\}\subseteq E(H_v)$. Without loss of generality $v_8\in N_{H_v}(v_4)$, since $\{v_5,v_6,v_7,v_8\}\cap N_{H_v}(v_4)\neq \emptyset$. Since $v_7v_3v_8$ is a path in $H_v$ and $H_v$ contains no triangle, the other vertex adjacent to $v_8$ is either $v_5$ or $v_6$, so without loss of generality $v_8v_6\in E(H_v)$. Since $v_5v_2v_6$ and $v_4v_8v_6$ are paths in $H_v$, the remaining vertex adjacent to $v_6$ is $v_7$. Since $V_3(H_v)=V(H_v)\setminus\{x\}$ and $x\in V_0(H_v)$, the remaining two vertices adjacent to $v_5$ are $v_7$ and $v_4$. Hence $H_v$ is the Wagner Graph, plus a single isolated vertex, as illustrated in \cref{dist33}a. Then $\mathcal{P}\subseteq G[N_G[v]]$ (see \cref{dist33}b), contradicting (iv). 

\begin{figure}
\centering
\begin{tikzpicture}[line width=1pt,vertex/.style={circle,inner sep=0pt,minimum size=0.2cm}] 

    \pgfmathsetmacro{\n}{8}; 
\pgfmathsetmacro{\m}{1.2};

  \node[draw=white, fill=none, label=left: b)] (K) at ($(5.5*\m,-1.2)$) [vertex] {};
\node[draw=white, fill=none, label=left: a)] (K) at ($(-2.5*\m,-1.2)$) [vertex] {};
        \node[draw=black,fill=gray, label=above: $v_1$] (V_0) at ($(90-0*360/\n:0.75*\m)$) [vertex] {};
       \node[draw=black,fill=gray, label=45:$v_3$] (V_1) at ($(90-1*360/\n:0.75*\m)$) [vertex] {};
 \node[draw=black,fill=gray, label=right:$v_7$] (V_2) at ($(90-2*360/\n:0.75*\m)$) [vertex] {};
  \node[draw=black,fill=gray, label=-45:$v_5$] (V_3) at ($(90-3*360/\n:0.75*\m)$) [vertex] {};
       \node[draw=black,fill=gray, label=below:$v_4$] (V_4) at ($(90-4*360/\n:0.75*\m)$) [vertex] {};
  \node[draw=black,fill=gray, label=-135:$v_8$] (V_5) at ($(90-5*360/\n:0.75*\m)$) [vertex] {};
       \node[draw=black,fill=gray, label=left:$v_6$] (V_6) at ($(90-6*360/\n:0.75*\m)$) [vertex] {};
 \node[draw=black,fill=gray, label=135:$v_2$] (V_7) at ($(90-7*360/\n:0.75*\m)$) [vertex] {};
\node[draw=black, fill=gray, label=above:$x$] (V_8) at ($(0:2.5)$) [vertex] {};
    
\draw[draw=black](V_1)--(V_2);
\draw[draw=black](V_2)--(V_3);
\draw[draw=black](V_3)--(V_4);
\draw[draw=black](V_4)--(V_5);
\draw[draw=black](V_0)--(V_1);
\draw[draw=black](V_5)--(V_6);
\draw[draw=black](V_6)--(V_7);
\draw[draw=black](V_7)--(V_0);
\draw[draw=black](V_1)--(V_5);
\draw[draw=black](V_2)--(V_6);
\draw[draw=black](V_3)--(V_7);
\draw[draw=black](V_4)--(V_0);
\end{tikzpicture}
\hspace{5mm}
\begin{tikzpicture}[line width=1pt,vertex/.style={circle,inner sep=0pt,minimum size=0.2cm}] 

    \pgfmathsetmacro{\n}{5}; 
    \pgfmathsetmacro{\m}{\n-1};

  \node[draw=black,fill=gray, label=above:$x$] (V_0) at ($(90-0*360/\n:1.5)$) [vertex] {};
 \node[draw=black,fill=gray, label=above:$v_6$] (V_1) at ($(90-1*360/\n:1.5)$) [vertex] {};
 \node[draw=black,fill=gray, label=right:$v_4$] (V_2) at ($(90-2*360/\n:1.5)$) [vertex] {}; 
\node[draw=black,fill=gray, label=left:$v_2$] (V_3) at ($(90-3*360/\n:1.5)$) [vertex] {};
 \node[draw=black,fill=gray, label=above:$v_8$] (V_4) at ($(90-4*360/\n:1.5)$) [vertex] {};
  \node[draw=black,fill=gray,  label={[label distance=-3mm]87:$v$}] (V_{a0}) at ($(90-0*360/\n:0.75)$) [vertex] {};
 \node[draw=black,fill=gray, label={[label distance=-1mm]above:$v_5$}] (V_{a1}) at ($(90-1*360/\n:0.75)$) [vertex] {};
 \node[draw=black,fill=gray, label={[label distance=-1.5mm]85:$v_7$}] (V_{a2}) at ($(90-2*360/\n:0.75)$) [vertex] {}; 
\node[draw=black,fill=gray,  label={[label distance=-1.5mm]95:$v_3$}] (V_{a3}) at ($(90-3*360/\n:0.75)$) [vertex] {};
 \node[draw=black,fill=gray,  label={[label distance=-1mm]above:$v_1$}] (V_{a4}) at ($(90-4*360/\n:0.75)$) [vertex] {};
    \foreach \x in {0,...,4} {
  
\draw[draw=black] (V_\x) -- (V_{a\x});
    }

\draw[draw=black](V_1)--(V_2);
\draw[draw=black](V_2)--(V_3);
\draw[draw=black](V_3)--(V_4);
\draw[draw=black](V_4)--(V_0);
\draw[draw=black](V_0)--(V_1);
\draw[draw=black](V_{a1})--(V_{a3});
\draw[draw=black](V_{a2})--(V_{a4});
\draw[draw=black](V_{a3})--(V_{a0});
\draw[draw=black](V_{a4})--(V_{a1});
\draw[draw=black](V_{a0})--(V_{a2});
\end{tikzpicture}
\caption{\label{dist33}}
\end{figure}

\begin{case5}$S=\{x\}$ and $x\notin V_0(H_v)$\textup{:}\end{case5}

The number of vertices of odd degree in $H_v$ is even, so $x\in V_2(H_v)$. By contracting an edge of $H_v$ incident to $x$, we obtain a cubic graph on eight vertices with diameter at most 2. In Cases 3 and 4 we showed that there are only two such graphs (one with and one without a triangle), so $H_v$ is a copy of one of these in which exactly one edge is subdivided exactly once. It is quick to check that the only such graph in which $\dist (x',y')\leq 2$ whenever $x'$ and $y'$ both have degree 3 is the graph depicted in \cref{dist34}a. Then $\mathcal{P}\subseteq G[N_G[v]]$ (see \cref{dist34}b), contradicting (iv).
\end{proof}

\begin{figure}[htb]
\centering
\begin{tikzpicture}[line width=1pt,vertex/.style={circle,inner sep=0pt,minimum size=0.2cm}] 

    \pgfmathsetmacro{\n}{8}; 
    \pgfmathsetmacro{\m}{1.2};

\node[draw=white, fill=none, label=left: a)] (K) at ($(-2,-1.2)$) [vertex] {};
        \node[draw=black,fill=gray, label=above: $v_1$] (V_0) at ($(90-0*360/\n:0.75*\m)$) [vertex] {};
       \node[draw=black,fill=gray, label=45:$v_2$] (V_1) at ($(90-1*360/\n:0.75*\m)$) [vertex] {};
 \node[draw=black,fill=gray, label=right:$v_3$] (V_2) at ($(90-2*360/\n:0.75*\m)$) [vertex] {};
  \node[draw=black,fill=gray, label=-45:$v_4$] (V_3) at ($(90-3*360/\n:0.75*\m)$) [vertex] {};
       \node[draw=black,fill=gray, label=below:$v_5$] (V_4) at ($(90-4*360/\n:0.75*\m)$) [vertex] {};
  \node[draw=black,fill=gray, label=-135:$v_6$] (V_5) at ($(90-5*360/\n:0.75*\m)$) [vertex] {};
       \node[draw=black,fill=gray, label=left:$v_7$] (V_6) at ($(90-6*360/\n:0.75*\m)$) [vertex] {};
 \node[draw=black,fill=gray, label=135:$v_8$] (V_7) at ($(90-7*360/\n:0.75*\m)$) [vertex] {};
\node[draw=black, fill=gray, label={[label distance=-1.2mm]above:$x$}] (V_8) at ($(0:0.45*\m)$) [vertex] {};
    
\draw[draw=black](V_1)--(V_2);
\draw[draw=black](V_2)--(V_3);
\draw[draw=black](V_3)--(V_4);
\draw[draw=black](V_4)--(V_5);
\draw[draw=black](V_0)--(V_1);
\draw[draw=black](V_5)--(V_6);
\draw[draw=black](V_6)--(V_7);
\draw[draw=black](V_7)--(V_0);
\draw[draw=black](V_1)--(V_5);
\draw[draw=black](V_2)--(V_8);
\draw[draw=black](V_3)--(V_7);
\draw[draw=black](V_4)--(V_0);
\draw[draw=black](V_6)--(V_8);
\end{tikzpicture}
\hspace{0mm}
\begin{tikzpicture}[line width=1pt,vertex/.style={circle,inner sep=0pt,minimum size=0.2cm}] 

    \pgfmathsetmacro{\n}{5}; 
    \pgfmathsetmacro{\m}{\n-1};
\node[draw=white, fill=none, label=left: b)] (K) at ($(-2,-1.2)$) [vertex] {};
  \node[draw=black,fill=gray, label=above:$x$] (V_0) at ($(90-0*360/\n:1.5)$) [vertex] {};
 \node[draw=black,fill=gray, label=above:$v_6$] (V_1) at ($(90-1*360/\n:1.5)$) [vertex] {};
 \node[draw=black,fill=gray, label=right:$v_4$] (V_2) at ($(90-2*360/\n:1.5)$) [vertex] {}; 
\node[draw=black,fill=gray, label=left:$v_7$] (V_3) at ($(90-3*360/\n:1.5)$) [vertex] {};
 \node[draw=black,fill=gray, label=above:$v_5$] (V_4) at ($(90-4*360/\n:1.5)$) [vertex] {};
  \node[draw=black,fill=gray, label={[label distance=-3mm]87:$v$}] (V_{a0}) at ($(90-0*360/\n:0.75)$) [vertex] {};
 \node[draw=black,fill=gray, label={[label distance=-1mm]above:$v_3$}] (V_{a1}) at ($(90-1*360/\n:0.75)$) [vertex] {};
 \node[draw=black,fill=gray, label={[label distance=-1.5mm]85:$v_2$}] (V_{a2}) at ($(90-2*360/\n:0.75)$) [vertex] {}; 
\node[draw=black,fill=gray, label={[label distance=-1.5mm]95:$v_1$}] (V_{a3}) at ($(90-3*360/\n:0.75)$) [vertex] {};
 \node[draw=black,fill=gray, label={[label distance=-1mm]above:$v_8$}] (V_{a4}) at ($(90-4*360/\n:0.75)$) [vertex] {};
    \foreach \x in {0,...,4} {
  
\draw[draw=black] (V_\x) -- (V_{a\x});
    }

\draw[draw=black](V_1)--(V_2);
\draw[draw=black](V_2)--(V_3);
\draw[draw=black](V_3)--(V_4);
\draw[draw=black](V_4)--(V_0);
\draw[draw=black](V_0)--(V_1);
\draw[draw=black](V_{a1})--(V_{a3});
\draw[draw=black](V_{a2})--(V_{a4});
\draw[draw=black](V_{a3})--(V_{a0});
\draw[draw=black](V_{a4})--(V_{a1});
\draw[draw=black](V_{a0})--(V_{a2});
\end{tikzpicture}
\hspace{0mm}
\begin{tikzpicture}[line width=1pt,vertex/.style={circle,inner sep=0pt,minimum size=0.2cm}] 

    \pgfmathsetmacro{\n}{5}; 
    \pgfmathsetmacro{\m}{\n-1};
\node[draw=white, fill=none, label=left: c)] (K) at ($(-2,-1.2)$) [vertex] {};
  \node[draw=black,fill=gray, label=above:$a_1$] (V_0) at ($(90-0*360/\n:1.5)$) [vertex] {};
 \node[draw=black,fill=gray, label=above:$c_2$] (V_1) at ($(90-1*360/\n:1.5)$) [vertex] {};
 \node[draw=black,fill=gray, label=right:$b_3$] (V_2) at ($(90-2*360/\n:1.5)$) [vertex] {}; 
\node[draw=black,fill=gray, label=left:$b_2$] (V_3) at ($(90-3*360/\n:1.5)$) [vertex] {};
 \node[draw=black,fill=gray, label=above:$b_1$] (V_4) at ($(90-4*360/\n:1.5)$) [vertex] {};
  \node[draw=black,fill=gray, label={[label distance=-3mm]87:$a_2$}] (V_{a0}) at ($(90-0*360/\n:0.75)$) [vertex] {};
 \node[draw=black,fill=gray, label={[label distance=-1mm]above:$a_3$}] (V_{a1}) at ($(90-1*360/\n:0.75)$) [vertex] {};
 \node[draw=black,fill=gray,  label={[label distance=-1.5mm]85:$v$}] (V_{a2}) at ($(90-2*360/\n:0.75)$) [vertex] {}; 
\node[draw=black,fill=gray, label={[label distance=-1.5mm]95:$c_3$}] (V_{a3}) at ($(90-3*360/\n:0.75)$) [vertex] {};
 \node[draw=black,fill=gray,  label={[label distance=-1mm]above:$c_1$}] (V_{a4}) at ($(90-4*360/\n:0.75)$) [vertex] {};
    \foreach \x in {0,...,4} {
  
\draw[draw=black] (V_\x) -- (V_{a\x});
    }

\draw[draw=black](V_1)--(V_2);
\draw[draw=black](V_2)--(V_3);
\draw[draw=black](V_3)--(V_4);
\draw[draw=black](V_4)--(V_0);
\draw[draw=black](V_0)--(V_1);
\draw[draw=black](V_{a1})--(V_{a3});
\draw[draw=black](V_{a2})--(V_{a4});
\draw[draw=black](V_{a3})--(V_{a0});
\draw[draw=black](V_{a4})--(V_{a1});
\draw[draw=black](V_{a0})--(V_{a2});
\end{tikzpicture}
\caption{\label{dist34}}
\end{figure}

\begin{clm}\label{K33}If $v\in V_9(G)\cap \mathcal{L}$ and $H_v\cong K_{3,3}\dot{\cup}K_3$, then for each component $C$ of $G- N[v]$, there is some $v$-suitable subgraph $C'$ with $N(C')\setminus N(C)\neq \emptyset$.\end{clm}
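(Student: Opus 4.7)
I would argue by contradiction on a fixed component $C$ of $G - N[v]$: assume every $v$-suitable subgraph $C'$ of $G$ satisfies $N(C') \subseteq N(C)$, and aim to exhibit $\mathcal{P}$ as a minor of $G$, contradicting (iv). Using $H_v \cong K_{3,3} \dot{\cup} K_3$, label $N(v) = \{a_1,a_2,a_3\}\cup\{b_1,b_2,b_3\}\cup\{c_1,c_2,c_3\}$ so that the $a_i$ are the $K_3$-component and $(\{b_i\},\{c_j\})$ is the bipartition of the $K_{3,3}$-component; equivalently, in the edge-minimal spanning subgraph $G'_v$ of $G[N(v)]$ the only non-edges are the three $a_ia_j$ pairs and the nine $b_ic_j$ pairs, and every other pair in $N(v)$ is adjacent in $G$. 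I first handle the degenerate case $N(C) = N(v)$: here contracting $V(C)$ to a single vertex $c$ yields an 11-vertex subgraph on $\{v,c\}\cup N(v)$ whose non-adjacency pattern is entirely prescribed by $H_v$ together with $vc$, and an explicit Petersen search (in the style of the table analysis of \cref{deg8lem}) locates $\mathcal{P}$ as a subgraph, contradicting (iv). So $N(v) \setminus N(C) \ne \emptyset$, and we may pick $w \in N(v) \setminus N(C)$.

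Suppose first that some such $w$ has a neighbor in $V(G) \setminus N[v]$; then this neighbor lies in some component $C^*$ of $G - N[v]$ with $w \in N(C^*) \setminus N(C)$. Apply \cref{separations} to the separation $(V(C^*) \cup N(C^*),\, V(G) \setminus V(C^*))$, whose order is $|N(C^*)|$, whose separator $N(C^*)$ lies in $N(v)$, and with $v$ on the large side: when $|N(C^*)| \le 6$ this yields $V(C^*) \cap \mathcal{L} \ne \emptyset$, so $C^*$ is a $v$-suitable subgraph with $N(C^*) \setminus N(C) \ne \emptyset$, as required; when $|N(C^*)| \ge 7$, $N(C^*)$ meets each of $\{a_i\}, \{b_i\}, \{c_j\}$, and contracting $V(C^*)$ to a single vertex $c^*$ produces a minor of $G$ in which a direct Petersen search on $\{v,c^*\} \cup N(v)$ succeeds, again contradicting (iv).

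Otherwise every $w \in N(v) \setminus N(C)$ has $N(w) \subseteq N[v]$; since $v \in \mathcal{L}$ forbids $N[w] \subsetneq N[v]$, we conclude $N[w] = N[v]$, so $w$ is a true twin of $v$, adjacent in $G$ to every other vertex of $N(v)$. With $|N(v) \setminus N(C)| \geq 3$, at least three such twins exist, and each twin in $\{a_i\}$ fills in the three $a_ia_j$ edges missing from $G'_v$, while each twin in $\{b_i\}\cup\{c_j\}$ fills in the three corresponding $b_ic_j$ edges. I would then construct a Petersen subgraph of $G$ using $v$ as the apex, a chosen twin and the contraction of $C$ as two outer branch sets, and suitably picked vertices of $N(v)$ for the remaining outer 5-cycle and inner pentagram, verified by a short case split on how the twins distribute among $\{a_i\}, \{b_i\}, \{c_j\}$.

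The main obstacle is the final Petersen construction in the twin-heavy regime, together with the $|N(C^*)| \geq 7$ search in the second paragraph: both require care to ensure that the residual $b_ic_j$ non-edges in $G[N(v)]$ (those not filled in by twins or by adjacencies of $c^*$) do not obstruct the outer 5-cycle or pentagram edges of $\mathcal{P}$. I expect these to reduce to a small fixed number of sub-cases parametrised by $|N(C)\cap\{a_i\}|$, $|N(C)\cap\{b_i\}|$, $|N(C)\cap\{c_j\}|$ (respectively the analogous intersections for $N(C^*)$), in each of which the Petersen embedding is explicit.
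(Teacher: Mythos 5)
Your proof misses the single structural observation that drives the paper's argument and, without it, leaves a genuine gap. The paper first shows (via the Petersen embedding of \cref{dist34}c) that there is no path in $G$ between any two vertices on opposite sides of the $K_{3,3}$-component of $H_v$ whose internal vertices all lie outside $N[v]$. In your labelling this says: no $b_i$-$c_j$ path avoids $N[v]$ internally. From this one fact, every case you split into collapses immediately: no component $C$ of $G-N[v]$ can have $N(C)$ meeting both $\{b_i\}$ and $\{c_j\}$, which forces $|N(C)|\le 6$; no vertex in $\{b_i\}\cup\{c_j\}$ can be dominant in $G[N(v)]$ (so by the $\mathcal{L}$-condition each has a neighbour outside $N[v]$); and the alleged non-$v$-suitable component with $|N(C')|\ge 7$ simply cannot exist. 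This yields a direct four-line argument, with a single diagram to verify.

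The concrete gap in your writeup is in the final twin-heavy case: you assert ``With $|N(v)\setminus N(C)|\ge 3$'' without any justification. If $|N(C)|\in\{7,8\}$ then $|N(v)\setminus N(C)|\in\{1,2\}$ and your construction fails. The only way to exclude $|N(C)|\ge 7$ is exactly the $b$-$c$ path observation above (if $N(C)$ has $\ge 7$ vertices it must intersect both sides of the $K_{3,3}$, giving the forbidden path through $C$), which you never establish. Similarly, your cases $N(C)=N(v)$ and $|N(C^*)|\ge 7$ each defer to an unperformed ``explicit Petersen search,'' and the final twin construction is left at ``verified by a short case split''; in each of these the needed Petersen embedding is precisely the one behind the $b$-$c$ path observation, so you are implicitly relying on it three times without proving it once. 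Establishing that observation first, as the paper does, would both close the gap and remove the need for the case analysis altogether.
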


\begin{proof}
Let $\{a_1,a_2,a_3,b_1,b_2,b_3,c_1,c_2,c_3\}:=V(H_v)$, with $a_ib_j\in E(H_v)$ for $i,j\in \{1,2,3\}$, and $c_ic_j\in E(H_v)$ for distinct $i,j\in \{1,2,3\}$. Suppose for contradiction that there is a path $P$ of $G$ from $a_i$ to $b_j$ with no internal vertex in $N[v]$ for some $i,j\in \{1,2,3\}$. Without loss of generality, $i=j=1$. Let $G'$ be obtained from $G$ by contracting all but one edge of $P$. Then $\mathcal{P}\subseteq G'$ (see \cref{dist34}c), contradicting (iv). Hence, there is no such path $P$. In particular, no vertex $v'$ in $\{a_1,a_2,a_3, b_1,b_2,b_3\}$ is adjacent to every vertex of $N(v)\setminus \{v'\}$. Hence, since $v\in \mathcal{L}$, for each $v'\in \{a_1,a_2,a_3, b_1,b_2,b_3\}$ there is some 
component $C$ of $G- N[v]$ such that $v'\in N(C)$. However, there is no component $C$ such that $N(C)$ contains some vertex in $\{a_1,a_2,a_3\}$ and some vertex in $\{b_1,b_2,b_3\}$. Hence, for each component $C$ of $G- N[v]$ there is a component $C'$ of $G- N[v]$ with $N(C')\setminus N(C)\neq \emptyset$. Suppose for contradiction that $C'$ is not $v$-suitable. By \cref{separations} $|N(C')|\geq 7$. Since $\overline{G[N(C')]}\subseteq H_v$, there is some vertex in $\{a_1,a_2,a_3\}\cap N(C')$ and some vertex in $\{b_1,b_2,b_3\}\cap N(C')$, a contradiction. Hence $C'$ satisfies our claim.
\end{proof}

\begin{clm} \label{2dist3}If $v\in V_9(G)\cap \mathcal{L}$ and there are two vertices $x$ and $y$ in $V_3(H_v)$ such that $\dist_{H_v}(x,y)\geq 3$ and there is some component $C$ of $G- N[v]$ with $|N(C)|\geq 5$, then for each component $C'$ of $G-N[v]$ there is a $v$-suitable subgraph $C''$ with $N(C'')\setminus N(C')\neq \emptyset$.\end{clm}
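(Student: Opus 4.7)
I would argue by contradiction: suppose there is a component $C'$ of $G-N[v]$ such that every $v$-suitable component $C''$ satisfies $N(C'')\subseteq N(C')$, and aim to produce $\mathcal{P}$ as a minor of $G$, contradicting (iv).

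The first step is to extract the structure forced by the distance-$3$ hypothesis on $H_v$. Writing $N_{H_v}(x)=\{x_1,x_2,x_3\}$ and $N_{H_v}(y)=\{y_1,y_2,y_3\}$, the condition $\dist_{H_v}(x,y)\geq 3$ forces these two triples to be disjoint from each other and from $\{x,y\}$; let $z$ denote the remaining vertex of $N(v)$. Since $G':=\overline{H_v}$ (complement taken within $N(v)$) is a subgraph of $G[N(v)]$ and $y,z,y_1,y_2,y_3\notin N_{H_v}(x)$, with the symmetric statement for $y$, the edges $xy$, $xz$, $yz$, together with the six edges $xy_i$, $yx_i$ for $i\in\{1,2,3\}$, all lie in $G$. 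Thus $\{x,y,z\}$ induces a triangle of $G$, and the bipartite adjacency of $x$ to $\{y_1,y_2,y_3\}$ and $y$ to $\{x_1,x_2,x_3\}$ supplies the six matching edges that mimic the outer-to-inner pentagon connections of $\mathcal{P}$.

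Next I would use the contradiction hypothesis globally. Because $v\in\mathcal{L}$ has degree $9$, the definition of $\mathcal{L}$ forces every $u\in N(v)$ to have a neighbour outside $N[v]$, so each lies in $N(D)$ for some component $D$ of $G-N[v]$. By \cref{separations}, every component $D$ with $|N(D)|\leq 6$ is $v$-suitable and therefore, by assumption, satisfies $N(D)\subseteq N(C')$; consequently any $u\in N(v)\setminus N(C')$ belongs only to $N(D)$ for components with $|N(D)|\geq 7$. If such a $u$ exists, I would pick a corresponding component $D$, apply \cref{add2} to $G[N[C]]$ to obtain a table $(X_1,\dots,X_6)$ of $G[N[C]]$, and combine this table with suitable fragments of $D$ and $C'$ together with the edges identified above to embed $\mathcal{P}$, splitting into subcases according to where $u$ lies among $\{x,y,z,x_1,x_2,x_3,y_1,y_2,y_3\}$ exactly in the manner of the case-analyses of Claims \cref{dist3} and \cref{K33}. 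Otherwise $N(C')=N(v)$, and I would instead apply \cref{add2} to $G[N[C']]$, using $\{x\}$, $\{y\}$, $\{z\}$, the table's internal fragments $X_5,X_6$, and the component $C$ together with roots from $\{x_i,y_j\}$ as the ten branch sets of a Petersen minor.

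The principal obstacle will be the configuration analysis inside each of these subcases: one must verify that whatever the distribution of non-edges of $G'$ beyond $N_{H_v}(x)\cup N_{H_v}(y)$, and whatever the exact sizes of $N(C)$, $N(C')$, and $N(D)$, the triangle $xyz$ together with the six cross edges can always be completed to a Petersen minor through the available components. The distance-$3$ hypothesis is the key structural lever, since it is precisely what guarantees that these nine edges of $G[N(v)]$ are simultaneously present and hence that the Petersen skeleton can be closed up using the components $C$, $C'$, and possibly $D$.
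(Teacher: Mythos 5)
You take a genuinely different route from the paper, which works by contracting the edge $xy$ (which is present in $G$ because $xy\notin E(H_v)\supseteq E(\overline{G[N(v)]})$) to produce a graph $G'=G/xy$ in which $v$ has degree $8$, and then invokes \cref{deg8lem} on $G'$ --- this is precisely why \cref{deg8lem} is stated for an arbitrary graph $H$ rather than for $G$ itself. Since $\mathcal{P}$ is not a minor of $G'$, \cref{deg8lem} forces strong structure on $G'[N(v)]$, and the paper shows there is a unique troublesome vertex $z$, pins down $H'=H_v-\{x,y\}$ (essentially a cycle through $z$ together with a pendant path), and then disposes of a small number of cases by contracting appropriate components. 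Your proposal instead tries to build $\mathcal{P}$ directly in $G$ from a table of $G[N[C]]$ and the edges forced on $G[N(v)]$, bypassing the $G/xy$ reduction.

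There are, however, gaps in your argument. First, the assertion that ``the definition of $\mathcal{L}$ forces every $u\in N(v)$ to have a neighbour outside $N[v]$'' is false. If $u$ is dominant in $G[N[v]]$ then $N[u]\supseteq N[v]$, so $N[u]\subsetneq N[v]$ cannot hold and $v\in\mathcal{L}$ imposes no constraint on $u$; and since $H_v$ is only a \emph{supergraph} of $\overline{G[N(v)]}$, a vertex of degree $3$ in $H_v$ may still be dominant in $G[N[v]]$. The paper is careful to draw this conclusion only for vertices $z'\in N_{H'}[z]$ after first establishing that $z$ and its $H'$-neighbours are genuinely non-adjacent in $G$. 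Second, your application of \cref{add2} to obtain a table of $G[N[C]]$ requires $|V(C)|\geq 2$, which you have not checked (and need not hold). Third, and most significantly, the proposal delegates essentially all of the remaining work to an unperformed configuration analysis (``one must verify that\ldots the Petersen skeleton can always be closed up''). The paper's $G/xy$ reduction plus \cref{deg8lem} drastically limits the configurations that need to be checked, and even then the remaining verification occupies two substantial tables of cases; there is no evidence that a direct embedding through $C$, $C'$, and a component $D$ with $|N(D)|\geq 7$ can be completed uniformly across all possibilities for the non-edges of $G[N(v)]$ outside $N_{H_v}(x)\cup N_{H_v}(y)$.
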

\begin{proof}

By \cref{11vertices}, $V(C)\neq \emptyset$. Choose $x$ and $y$, if possible, so that
\begin{align}\label{xyeq}N_{H_v}(x)\cup N_{H_v}(y)\subseteq V_3(H_v).\end{align}

Let $G':=G/xy$, let $x'$ be the new vertex of $G'$, and let $H'=H_v- \{x,y\}$. Note that $\deg_{G'}(v)=8$. Since $|N_G(C)|\geq 5$, we have $|N_{G'}(C)|\geq 4$, and hence $G'[N_{G'}(C)]\ncong K_3$. By \cref{deg8lem} and (iv), $G'$ does not satisfy $|N_{G'}(v')\cap N_{G'}(v)|\geq 5$ for all $v'\in N_{G'}(v)$. 

Now $\{x,y\}\subseteq V_3(H_v)$ and $\dist_{H_v}(x,y)\geq 3$, so $|N_{H_v}(x)\cap N_{H_v}(y)|=6$. Also, since $\overline{G[N(v)]}\subseteq H_v$, there is no common neighbour of $x$ and $y$ in $\overline{G[N(v)]}$, so $x'$ is dominant in $G'[N_{G'}[v]]$, and $|N_{G'}(x')\cap N_{G'}(v)|=7> 5$. 
 
  By \cref{clique}, $\Delta(H_v)=3$. If $v'\in N_{H_v}(x)\cup N_{H_v}(y)$, then since $v'$ is not adjacent to both $x$ and $y$ in $\overline{H_v}$, we have $|N_{G'}(v')\cap N_{G'}(v)|\geq |N_{\overline{H_v}}(v')|\geq 8-3=5$.
  
 Hence, the unique vertex $z$ in $H'-(N_{H_v}(x)\cup N_{H_v}(y))$ satisfies $|N_{G'}(z)\cap N_{G'}(v)|\leq 4$. Thus $z$ has at most three neighbours in $G'[N(v)\setminus \{x,y\}]$ and hence $|N_{H'}(z)|\geq 7-1-3= 3$. Since $\Delta(H')\leq \Delta(H_v)=3$, we have $\deg_{H_v}(z)=3$.
 
 There are an even number of vertices, including $z$, with odd degree in $H'$. We have $\deg_{H'}(v')\leq \Delta(H_v)-1=2$ for the six vertices $v'$ in $N_{H_v}(x)\cup N_{H_v}(y)=V(H'-z)$, so there are an odd number of vertices in $V_1(H')$. Each vertex in $V_1(H')$ has degree at most 2 in $H_v$ since $x$ and $y$ have no common neighbour in $H_v$. So $V_1(H')$ is a clique of $H_v$ by \cref{clique}, and hence a clique of $H'$. Since $|V_1(H')|$ is odd, there is a unique vertex $w$ in $V_1(H')$. By the same argument, the vertices of $V_0(H')\cup V_1(H')$ form a clique of $H'$. No vertex in $V_0(H')$ is adjacent in $H'$ to $w$, so $V_0(H')=\emptyset$. Hence, $V_1(H')=\{w\}$, $V_3(H')=\{z\}$ and $V_2(H')=V(H'-\{w,z\})$.
 
 Now $w$ is one of the six vertices of $N_{H_v}(x)\cup N_{H_v}(y)$, and $\deg_{H_v}(w)\leq \deg_{H'}(w)+1\leq 2$. In particular $x$ and $y$ do not satisfy (\ref{xyeq}), so no such pair satisfy (\ref{xyeq}). This means, there are no two vertices $x'$ and $y'$ in $V_3(H_v)$ that satisfy (\ref{xyeq}) such that $\dist_{H_v}(x',y')\geq 3$. 
 
 We consider four cases depending on whether $H'$ is connected and on the components of $G-N[v]$.

\begin{case6}$H'$ is not connected\textup{:}\end{case6}
 Since each connected component of $H'$ has an even number of vertices of odd degree, $z$ and $w$ are in the same component, and each other component is a cycle. Since $|V(H')\setminus N_{H'}[z]|=3$, there is a unique component $D$ of $H'$ not containing $z$ and $D$ is a triangle. Since $|V_2(H')|= 5$,  there is some vertex $x_0$ of degree 2 not in $D$ and not adjacent to $w$. Assume without loss of generality that $x_0$ is adjacent to $x$ in $H_v$. Since $x\in V_3(H_v)$, there is some vertex $y_0$ in $D$ such that $y_0x\notin E(H_v)$. Now $y_0$ is adjacent to no neighbour of $x_0$ in $H_v$, so $\dist_{H_v}(x,y)\geq 3$. But the vertices adjacent to $\{x_0,y_0\}$ in $H_v$ are all in $V_3(H_v)$ since $w$ is adjacent to neither $x_0$ nor $y_0$ in $H_v$. Therefore $x_0$ and $y_0$ satisfy (\ref{xyeq}), a contradiction.

\bigskip

For the remaining cases, $H'$ is a connected graph such that $|V_1(H')|=|V_3(H')|=1$ and every other vertex has degree 2. Hence, $H'$ is composed of a path $P$ from $z$ to $w$ and a cycle $Q$ of size at least 3 containing $z$, with $V(P\cap Q)=\{z\}$. Let $z_0$ be the neighbour of $z$ in the path from $z$ to $w$, and let $z_1$ and $z_2$ be the other neighbours of $z$ in $H'$.

\begin{case6}$H'$ is connected and there is some component $D$ of $G-N[v]$ such that $z\in N(D)$ and $|N(D)\cap N_{H'}(z)|\geq 2$\textup{:}\end{case6}
At least one vertex is in $\{z_1,z_2\}\cap N(D)$, so without loss of generality $z_1\in N(D)$. Either $z_0$ or $z_2$ is also in $N(D)$. Since $V(Q)\subseteq V(H')\setminus \{w\}$, we have $3\leq |V(Q)|\leq 6$. Let $G'':=G'/E(D)$. The diagrams in \cref{2dist31} demonstrate that $\mathcal{P}\subseteq G''$, contradicting (iv).

\begin{table}[!t]
\centering
\caption{\label{2dist31}}
\begin{tabular}{|c|c|}
\hline
\begin{tikzpicture}[line width=1pt,vertex/.style={circle,inner sep=0pt,minimum size=0.2cm}] 

    \pgfmathsetmacro{\n}{1}; 
    \pgfmathsetmacro{\m}{\n-1};

        \node[draw=black,fill=none, label=above:$z_1$] (V_0) at ($(0,2*\n)$) [vertex] {};
       \node[draw=black,fill=none, label=above:$z_2$] (V_1) at ($(\n,2*\n)$) [vertex] {};
 \node[draw=black,fill=none, label=left:$z$] (V_2) at ($(0,\n)$) [vertex] {};
  \node[draw=black,fill=gray, label=below:$z_0$] (V_3) at ($(0,0)$) [vertex] {};
       \node[draw=black,fill=gray, label=below:$v_1$] (V_4) at ($(\n,0)$) [vertex] {};
  \node[draw=black,fill=gray, label=below:$v_2$] (V_5) at ($(2*\n,0)$) [vertex] {};
       \node[draw=black,fill=gray, label=above:$w$] (V_6) at ($(2*\n,\n)$) [vertex] {};

\draw[draw=black](V_1)--(V_2);
\draw[draw=black](V_0)--(V_1);
\draw[draw=black](V_0)--(V_2);
\draw[draw=black](V_2)--(V_3);
\draw[draw=black](V_3)--(V_4);
\draw[draw=black](V_5)--(V_6);
\draw[draw=black](V_4)--(V_5);

\end{tikzpicture}
\begin{tikzpicture}[line width=1pt,vertex/.style={circle,inner sep=0pt,minimum size=0.2cm}] 

    \pgfmathsetmacro{\n}{5}; 
    \pgfmathsetmacro{\m}{\n-1};

  \node[draw=black,fill=gray, label=above:$D$] (V_0) at ($(90-0*360/\n:1.5)$) [vertex] {};
 \node[draw=black,fill=none, label=above:$z_2$] (V_1) at ($(90-1*360/\n:1.5)$) [vertex] {};
 \node[draw=black,fill=gray, label=right:$x'$] (V_2) at ($(90-2*360/\n:1.5)$) [vertex] {}; 
\node[draw=black,fill=gray, label=left:$v$] (V_3) at ($(90-3*360/\n:1.5)$) [vertex] {};
 \node[draw=black,fill=none, label=above:$z_1$] (V_4) at ($(90-4*360/\n:1.5)$) [vertex] {};
  \node[draw=black,fill=none, label={[label distance=-3mm]87:$z$}] (V_{a0}) at ($(90-0*360/\n:0.75)$) [vertex] {};
 \node[draw=black,fill=gray, label={[label distance=-1mm]above:$z_0$}] (V_{a1}) at ($(90-1*360/\n:0.75)$) [vertex] {};
 \node[draw=black,fill=gray,  label={[label distance=-1.5mm]85:$v_1$}] (V_{a2}) at ($(90-2*360/\n:0.75)$) [vertex] {}; 
\node[draw=black,fill=gray,  label={[label distance=-1.5mm]95:$v_2$}] (V_{a3}) at ($(90-3*360/\n:0.75)$) [vertex] {};
 \node[draw=black,fill=gray, label={[label distance=-1mm]above:$w$}] (V_{a4}) at ($(90-4*360/\n:0.75)$) [vertex] {};
    \foreach \x in {0,...,4} {
  
\draw[draw=black] (V_\x) -- (V_{a\x});
    }

\draw[draw=black](V_1)--(V_2);
\draw[draw=black](V_2)--(V_3);
\draw[draw=black](V_3)--(V_4);
\draw[draw=black](V_4)--(V_0);
\draw[draw=black](V_0)--(V_1);
\draw[draw=black](V_{a1})--(V_{a3});
\draw[draw=black](V_{a2})--(V_{a4});
\draw[draw=black](V_{a3})--(V_{a0});
\draw[draw=black](V_{a4})--(V_{a1});
\draw[draw=black](V_{a0})--(V_{a2});
\end{tikzpicture}
&
\begin{tikzpicture}[line width=1pt,vertex/.style={circle,inner sep=0pt,minimum size=0.2cm}] 

    \pgfmathsetmacro{\n}{1}; 
    \pgfmathsetmacro{\m}{\n-1};

        \node[draw=black,fill=none, label=above:$z_1$] (V_0) at ($(0,2*\n)$) [vertex] {};
       \node[draw=black,fill=gray, label=above:$z_2$] (V_1) at ($(\n,2*\n)$) [vertex] {};
 \node[draw=black,fill=none, label=left:$z$] (V_2) at ($(0,\n)$) [vertex] {};
  \node[draw=black,fill=none, label=below:$z_0$] (V_3) at ($(0,0)$) [vertex] {};
       \node[draw=black,fill=gray, label=below:$v_1$] (V_4) at ($(\n,0)$) [vertex] {};
  \node[draw=black,fill=gray, label=below:$v_2$] (V_5) at ($(2*\n,0)$) [vertex] {};
       \node[draw=black,fill=gray, label=above:$w$] (V_6) at ($(2*\n,\n)$) [vertex] {};

\draw[draw=black](V_1)--(V_2);
\draw[draw=black](V_0)--(V_1);
\draw[draw=black](V_0)--(V_2);
\draw[draw=black](V_2)--(V_3);
\draw[draw=black](V_3)--(V_4);
\draw[draw=black](V_5)--(V_6);
\draw[draw=black](V_4)--(V_5);

\end{tikzpicture}
\begin{tikzpicture}[line width=1pt,vertex/.style={circle,inner sep=0pt,minimum size=0.2cm}] 

    \pgfmathsetmacro{\n}{5}; 
    \pgfmathsetmacro{\m}{\n-1};

  \node[draw=black,fill=gray, label=above:$D$] (V_0) at ($(90-0*360/\n:1.5)$) [vertex] {};
 \node[draw=black,fill=none, label=above:$z_0$] (V_1) at ($(90-1*360/\n:1.5)$) [vertex] {};
 \node[draw=black,fill=gray, label=right:$x'$] (V_2) at ($(90-2*360/\n:1.5)$) [vertex] {}; 
\node[draw=black,fill=gray, label=left:$v$] (V_3) at ($(90-3*360/\n:1.5)$) [vertex] {};
 \node[draw=black,fill=none, label=above:$z_1$] (V_4) at ($(90-4*360/\n:1.5)$) [vertex] {};
  \node[draw=black,fill=none, label={[label distance=-3mm]87:$z$}] (V_{a0}) at ($(90-0*360/\n:0.75)$) [vertex] {};
 \node[draw=black,fill=gray, label={[label distance=-1mm]above:$z_2$}] (V_{a1}) at ($(90-1*360/\n:0.75)$) [vertex] {};
 \node[draw=black,fill=gray,  label={[label distance=-1.5mm]85:$v_1$}] (V_{a2}) at ($(90-2*360/\n:0.75)$) [vertex] {}; 
\node[draw=black,fill=gray, label={[label distance=-1.5mm]95:$v_2$}] (V_{a3}) at ($(90-3*360/\n:0.75)$) [vertex] {};
 \node[draw=black,fill=gray, label={[label distance=-1mm]above:$w$}] (V_{a4}) at ($(90-4*360/\n:0.75)$) [vertex] {};
    \foreach \x in {0,...,4} {
  
\draw[draw=black] (V_\x) -- (V_{a\x});
    }

\draw[draw=black](V_1)--(V_2);
\draw[draw=black](V_2)--(V_3);
\draw[draw=black](V_3)--(V_4);
\draw[draw=black](V_4)--(V_0);
\draw[draw=black](V_0)--(V_1);
\draw[draw=black](V_{a1})--(V_{a3});
\draw[draw=black](V_{a2})--(V_{a4});
\draw[draw=black](V_{a3})--(V_{a0});
\draw[draw=black](V_{a4})--(V_{a1});
\draw[draw=black](V_{a0})--(V_{a2});
\end{tikzpicture}
\\
$|V(Q)|=3$, $z_2\in N(D)$&$|V(Q)|=3$, $z_0\in N(D)$
\\
\hline
\begin{tikzpicture}[line width=1pt,vertex/.style={circle,inner sep=0pt,minimum size=0.2cm}] 

    \pgfmathsetmacro{\n}{1}; 
    \pgfmathsetmacro{\m}{\n-1};

        \node[draw=black,fill=gray, label=above:$v_1$] (V_0) at ($(\n,2*\n)$) [vertex] {};
       \node[draw=black,fill=none, label=above:$z_1$] (V_1) at ($(0,2*\n)$) [vertex] {};
 \node[draw=black,fill=none, label=below:$z_2$] (V_2) at ($(\n,\n)$) [vertex] {};
  \node[draw=black,fill=none, label=left:$z$] (V_3) at ($(0,\n)$) [vertex] {};
       \node[draw=black,fill=gray, label=below:$z_0$] (V_4) at ($(0,0)$) [vertex] {};
  \node[draw=black,fill=gray, label=below:$v_2$] (V_5) at ($(\n,0)$) [vertex] {};
       \node[draw=black,fill=gray, label=below:$w$] (V_6) at ($(2*\n,0)$) [vertex] {};

\draw[draw=black](V_0)--(V_1);
\draw[draw=black](V_0)--(V_2);
\draw[draw=black](V_1)--(V_3);
\draw[draw=black](V_2)--(V_3);
\draw[draw=black](V_3)--(V_4);
\draw[draw=black](V_5)--(V_6);
\draw[draw=black](V_4)--(V_5);

\end{tikzpicture}
\begin{tikzpicture}[line width=1pt,vertex/.style={circle,inner sep=0pt,minimum size=0.2cm}] 

    \pgfmathsetmacro{\n}{5}; 
    \pgfmathsetmacro{\m}{\n-1};

  \node[draw=black,fill=gray, label=above:$D$] (V_0) at ($(90-0*360/\n:1.5)$) [vertex] {};
 \node[draw=black,fill=none, label=above:$z_2$] (V_1) at ($(90-1*360/\n:1.5)$) [vertex] {};
 \node[draw=black,fill=gray, label=right:$x'$] (V_2) at ($(90-2*360/\n:1.5)$) [vertex] {}; 
\node[draw=black,fill=gray, label=left:$w$] (V_3) at ($(90-3*360/\n:1.5)$) [vertex] {};
 \node[draw=black,fill=none, label=above:$z_1$] (V_4) at ($(90-4*360/\n:1.5)$) [vertex] {};
  \node[draw=black,fill=none,  label={[label distance=-3mm]87:$z$}] (V_{a0}) at ($(90-0*360/\n:0.75)$) [vertex] {};
 \node[draw=black,fill=gray, label={[label distance=-1mm]above:$z_0$}] (V_{a1}) at ($(90-1*360/\n:0.75)$) [vertex] {};
 \node[draw=black,fill=gray, label={[label distance=-1.5mm]85:$v_2$}] (V_{a2}) at ($(90-2*360/\n:0.75)$) [vertex] {}; 
\node[draw=black,fill=gray,  label={[label distance=-1.5mm]95:$v_1$}] (V_{a3}) at ($(90-3*360/\n:0.75)$) [vertex] {};
 \node[draw=black,fill=gray, label={[label distance=-1mm]above:$v$}] (V_{a4}) at ($(90-4*360/\n:0.75)$) [vertex] {};
    \foreach \x in {0,...,4} {
  
\draw[draw=black] (V_\x) -- (V_{a\x});
    }

\draw[draw=black](V_1)--(V_2);
\draw[draw=black](V_2)--(V_3);
\draw[draw=black](V_3)--(V_4);
\draw[draw=black](V_4)--(V_0);
\draw[draw=black](V_0)--(V_1);
\draw[draw=black](V_{a1})--(V_{a3});
\draw[draw=black](V_{a2})--(V_{a4});
\draw[draw=black](V_{a3})--(V_{a0});
\draw[draw=black](V_{a4})--(V_{a1});
\draw[draw=black](V_{a0})--(V_{a2});
\end{tikzpicture}
&
\begin{tikzpicture}[line width=1pt,vertex/.style={circle,inner sep=0pt,minimum size=0.2cm}] 

    \pgfmathsetmacro{\n}{1}; 
    \pgfmathsetmacro{\m}{\n-1};

        \node[draw=black,fill=gray, label=above:$v_1$] (V_0) at ($(\n,2*\n)$) [vertex] {};
       \node[draw=black,fill=none, label=above:$z_1$] (V_1) at ($(0,2*\n)$) [vertex] {};
 \node[draw=black,fill=gray, label=below:$z_2$] (V_2) at ($(\n,\n)$) [vertex] {};
  \node[draw=black,fill=none, label=left:$z$] (V_3) at ($(0,\n)$) [vertex] {};
       \node[draw=black,fill=none, label=below:$z_0$] (V_4) at ($(0,0)$) [vertex] {};
  \node[draw=black,fill=gray, label=below:$v_2$] (V_5) at ($(\n,0)$) [vertex] {};
       \node[draw=black,fill=gray, label=below:$w$] (V_6) at ($(2*\n,0)$) [vertex] {};

\draw[draw=black](V_0)--(V_1);
\draw[draw=black](V_0)--(V_2);
\draw[draw=black](V_1)--(V_3);
\draw[draw=black](V_2)--(V_3);
\draw[draw=black](V_3)--(V_4);
\draw[draw=black](V_5)--(V_6);
\draw[draw=black](V_4)--(V_5);

\end{tikzpicture}
\begin{tikzpicture}[line width=1pt,vertex/.style={circle,inner sep=0pt,minimum size=0.2cm}] 

    \pgfmathsetmacro{\n}{5}; 
    \pgfmathsetmacro{\m}{\n-1};

  \node[draw=black,fill=gray, label=above:$D$] (V_0) at ($(90-0*360/\n:1.5)$) [vertex] {};
 \node[draw=black,fill=none, label=above:$z_0$] (V_1) at ($(90-1*360/\n:1.5)$) [vertex] {};
 \node[draw=black,fill=gray, label=right:$x'$] (V_2) at ($(90-2*360/\n:1.5)$) [vertex] {}; 
\node[draw=black,fill=gray, label=left:$w$] (V_3) at ($(90-3*360/\n:1.5)$) [vertex] {};
 \node[draw=black,fill=none, label=above:$z_1$] (V_4) at ($(90-4*360/\n:1.5)$) [vertex] {};
  \node[draw=black,fill=none, label={[label distance=-3mm]87:$z$}] (V_{a0}) at ($(90-0*360/\n:0.75)$) [vertex] {};
 \node[draw=black,fill=gray, label={[label distance=-1mm]above:$v$}] (V_{a1}) at ($(90-1*360/\n:0.75)$) [vertex] {};
 \node[draw=black,fill=gray, label={[label distance=-1.5mm]85:$v_2$}] (V_{a2}) at ($(90-2*360/\n:0.75)$) [vertex] {}; 
\node[draw=black,fill=gray,  label={[label distance=-1.5mm]95:$v_1$}] (V_{a3}) at ($(90-3*360/\n:0.75)$) [vertex] {};
 \node[draw=black,fill=gray, label={[label distance=-1mm]above:$z_2$}] (V_{a4}) at ($(90-4*360/\n:0.75)$) [vertex] {};
    \foreach \x in {0,...,4} {
  
\draw[draw=black] (V_\x) -- (V_{a\x});
    }

\draw[draw=black](V_1)--(V_2);
\draw[draw=black](V_2)--(V_3);
\draw[draw=black](V_3)--(V_4);
\draw[draw=black](V_4)--(V_0);
\draw[draw=black](V_0)--(V_1);
\draw[draw=black](V_{a1})--(V_{a3});
\draw[draw=black](V_{a2})--(V_{a4});
\draw[draw=black](V_{a3})--(V_{a0});
\draw[draw=black](V_{a4})--(V_{a1});
\draw[draw=black](V_{a0})--(V_{a2});
\end{tikzpicture}
\\
$|V(Q)|=4$, $z_2\in N(D)$&$|V(Q)|=4$, $z_0\in N(D)$
\\
\hline
\begin{tikzpicture}[line width=1pt,vertex/.style={circle,inner sep=0pt,minimum size=0.2cm}] 

    \pgfmathsetmacro{\n}{1}; 
    \pgfmathsetmacro{\m}{\n-1};

        \node[draw=black,fill=gray, label=above:$v_1$] (V_0) at ($(\n,2*\n)$) [vertex] {};
       \node[draw=black,fill=gray, label=above:$v_2$] (V_1) at ($(2*\n,2*\n)$) [vertex] {};
 \node[draw=black,fill=none, label=above:$z_1$] (V_2) at ($(0,2*\n)$) [vertex] {};
  \node[draw=black,fill=none, label=below:$z_2$] (V_3) at ($(\n,\n)$) [vertex] {};
       \node[draw=black,fill=none, label=left:$z$] (V_4) at ($(0,\n)$) [vertex] {};
  \node[draw=black,fill=gray, label=below:$z_0$] (V_5) at ($(0,0)$) [vertex] {};
       \node[draw=black,fill=gray, label=below:$w$] (V_6) at ($(\n,0)$) [vertex] {};

\draw[draw=black](V_0)--(V_1);
\draw[draw=black](V_0)--(V_2);
\draw[draw=black](V_1)--(V_3);
\draw[draw=black](V_2)--(V_4);
\draw[draw=black](V_3)--(V_4);
\draw[draw=black](V_5)--(V_6);
\draw[draw=black](V_4)--(V_5);

\end{tikzpicture}
\begin{tikzpicture}[line width=1pt,vertex/.style={circle,inner sep=0pt,minimum size=0.2cm}] 

    \pgfmathsetmacro{\n}{5}; 
    \pgfmathsetmacro{\m}{\n-1};

  \node[draw=black,fill=gray, label=above:$D$] (V_0) at ($(90-0*360/\n:1.5)$) [vertex] {};
 \node[draw=black,fill=none, label=above:$z_2$] (V_1) at ($(90-1*360/\n:1.5)$) [vertex] {};
 \node[draw=black,fill=gray, label=right:$w$] (V_2) at ($(90-2*360/\n:1.5)$) [vertex] {}; 
\node[draw=black,fill=gray, label=left:$x'$] (V_3) at ($(90-3*360/\n:1.5)$) [vertex] {};
 \node[draw=black,fill=none, label=above:$z_1$] (V_4) at ($(90-4*360/\n:1.5)$) [vertex] {};
  \node[draw=black,fill=none,  label={[label distance=-3mm]87:$z$}] (V_{a0}) at ($(90-0*360/\n:0.75)$) [vertex] {};
 \node[draw=black,fill=gray, label={[label distance=-1mm]above:$z_0$}] (V_{a1}) at ($(90-1*360/\n:0.75)$) [vertex] {};
 \node[draw=black,fill=gray,  label={[label distance=-1.5mm]85:$v_2$}] (V_{a2}) at ($(90-2*360/\n:0.75)$) [vertex] {}; 
\node[draw=black,fill=gray, label={[label distance=-1.5mm]95:$v_1$}] (V_{a3}) at ($(90-3*360/\n:0.75)$) [vertex] {};
 \node[draw=black,fill=gray,  label={[label distance=-1mm]above:$v$}] (V_{a4}) at ($(90-4*360/\n:0.75)$) [vertex] {};
    \foreach \x in {0,...,4} {
  
\draw[draw=black] (V_\x) -- (V_{a\x});
    }

\draw[draw=black](V_1)--(V_2);
\draw[draw=black](V_2)--(V_3);
\draw[draw=black](V_3)--(V_4);
\draw[draw=black](V_4)--(V_0);
\draw[draw=black](V_0)--(V_1);
\draw[draw=black](V_{a1})--(V_{a3});
\draw[draw=black](V_{a2})--(V_{a4});
\draw[draw=black](V_{a3})--(V_{a0});
\draw[draw=black](V_{a4})--(V_{a1});
\draw[draw=black](V_{a0})--(V_{a2});
\end{tikzpicture}
&
\begin{tikzpicture}[line width=1pt,vertex/.style={circle,inner sep=0pt,minimum size=0.2cm}] 

    \pgfmathsetmacro{\n}{1}; 
    \pgfmathsetmacro{\m}{\n-1};

        \node[draw=black,fill=gray, label=above:$v_1$] (V_0) at ($(\n,2*\n)$) [vertex] {};
       \node[draw=black,fill=gray, label=above:$v_2$] (V_1) at ($(2*\n,2*\n)$) [vertex] {};
 \node[draw=black,fill=none, label=above:$z_1$] (V_2) at ($(0,2*\n)$) [vertex] {};
  \node[draw=black,fill=none, label=below:$z_2$] (V_3) at ($(\n,\n)$) [vertex] {};
       \node[draw=black,fill=none, label=left:$z$] (V_4) at ($(0,\n)$) [vertex] {};
  \node[draw=black,fill=gray, label=below:$z_0$] (V_5) at ($(0,0)$) [vertex] {};
       \node[draw=black,fill=gray, label=below:$w$] (V_6) at ($(\n,0)$) [vertex] {};

\draw[draw=black](V_0)--(V_1);
\draw[draw=black](V_0)--(V_2);
\draw[draw=black](V_1)--(V_3);
\draw[draw=black](V_2)--(V_4);
\draw[draw=black](V_3)--(V_4);
\draw[draw=black](V_5)--(V_6);
\draw[draw=black](V_4)--(V_5);

\end{tikzpicture}
\begin{tikzpicture}[line width=1pt,vertex/.style={circle,inner sep=0pt,minimum size=0.2cm}] 

    \pgfmathsetmacro{\n}{5}; 
    \pgfmathsetmacro{\m}{\n-1};

  \node[draw=black,fill=gray, label=above:$D$] (V_0) at ($(90-0*360/\n:1.5)$) [vertex] {};
 \node[draw=black,fill=none, label=above:$z_0$] (V_1) at ($(90-1*360/\n:1.5)$) [vertex] {};
 \node[draw=black,fill=gray, label=right:$x'$] (V_2) at ($(90-2*360/\n:1.5)$) [vertex] {}; 
\node[draw=black,fill=gray, label=left:$w$] (V_3) at ($(90-3*360/\n:1.5)$) [vertex] {};
 \node[draw=black,fill=none, label=above:$z_1$] (V_4) at ($(90-4*360/\n:1.5)$) [vertex] {};
  \node[draw=black,fill=none, label={[label distance=-3mm]87:$z$}] (V_{a0}) at ($(90-0*360/\n:0.75)$) [vertex] {};
 \node[draw=black,fill=gray, label={[label distance=-1mm]above:$z_2$}] (V_{a1}) at ($(90-1*360/\n:0.75)$) [vertex] {};
 \node[draw=black,fill=gray, label={[label distance=-1.5mm]85:$v_2$}] (V_{a2}) at ($(90-2*360/\n:0.75)$) [vertex] {}; 
\node[draw=black,fill=gray, label={[label distance=-1.5mm]95:$v_1$}] (V_{a3}) at ($(90-3*360/\n:0.75)$) [vertex] {};
 \node[draw=black,fill=gray, label={[label distance=-1mm]above:$v$}] (V_{a4}) at ($(90-4*360/\n:0.75)$) [vertex] {};
    \foreach \x in {0,...,4} {
  
\draw[draw=black] (V_\x) -- (V_{a\x});
    }

\draw[draw=black](V_1)--(V_2);
\draw[draw=black](V_2)--(V_3);
\draw[draw=black](V_3)--(V_4);
\draw[draw=black](V_4)--(V_0);
\draw[draw=black](V_0)--(V_1);
\draw[draw=black](V_{a1})--(V_{a3});
\draw[draw=black](V_{a2})--(V_{a4});
\draw[draw=black](V_{a3})--(V_{a0});
\draw[draw=black](V_{a4})--(V_{a1});
\draw[draw=black](V_{a0})--(V_{a2});
\end{tikzpicture}
\\
$|V(Q)|=5$, $z_2\in N(D)$&$|V(Q)|=5$, $z_0\in N(D)$
\\
\hline
\begin{tikzpicture}[line width=1pt,vertex/.style={circle,inner sep=0pt,minimum size=0.2cm}] 

    \pgfmathsetmacro{\n}{1}; 
    \pgfmathsetmacro{\m}{\n-1};

        \node[draw=black,fill=gray, label=above:$v_2$] (V_0) at ($(2*\n,2*\n)$) [vertex] {};
       \node[draw=black,fill=gray, label=above:$v_1$] (V_1) at ($(\n,2*\n)$) [vertex] {};
 \node[draw=black,fill=gray, label=below:$v_3$] (V_2) at ($(2*\n,\n)$) [vertex] {};
  \node[draw=black,fill=none, label=above:$z_1$] (V_3) at ($(0,2*\n)$) [vertex] {};
       \node[draw=black,fill=none, label=below:$z_2$] (V_4) at ($(\n,\n)$) [vertex] {};
  \node[draw=black,fill=none, label=left:$z$] (V_5) at ($(0,\n)$) [vertex] {};
       \node[draw=black,fill=gray, label=below:$z_0$] (V_6) at ($(0,0)$) [vertex] {};

\draw[draw=black](V_0)--(V_1);
\draw[draw=black](V_0)--(V_2);
\draw[draw=black](V_1)--(V_3);
\draw[draw=black](V_2)--(V_4);
\draw[draw=black](V_3)--(V_5);
\draw[draw=black](V_5)--(V_6);
\draw[draw=black](V_4)--(V_5);

\end{tikzpicture}
\begin{tikzpicture}[line width=1pt,vertex/.style={circle,inner sep=0pt,minimum size=0.2cm}] 

    \pgfmathsetmacro{\n}{5}; 
    \pgfmathsetmacro{\m}{\n-1};

  \node[draw=black,fill=gray, label=above:$D$] (V_0) at ($(90-0*360/\n:1.5)$) [vertex] {};
 \node[draw=black,fill=none, label=above:$z_2$] (V_1) at ($(90-1*360/\n:1.5)$) [vertex] {};
 \node[draw=black,fill=gray, label=right:$v_1$] (V_2) at ($(90-2*360/\n:1.5)$) [vertex] {}; 
\node[draw=black,fill=gray, label=left:$x'$] (V_3) at ($(90-3*360/\n:1.5)$) [vertex] {};
 \node[draw=black,fill=none, label=above:$z_1$] (V_4) at ($(90-4*360/\n:1.5)$) [vertex] {};
  \node[draw=black,fill=none, label={[label distance=-3mm]87:$z$}] (V_{a0}) at ($(90-0*360/\n:0.75)$) [vertex] {};
 \node[draw=black,fill=gray,  label={[label distance=-1mm]above:$z_0$}] (V_{a1}) at ($(90-1*360/\n:0.75)$) [vertex] {};
 \node[draw=black,fill=gray,  label={[label distance=-1.5mm]85:$v_3$}] (V_{a2}) at ($(90-2*360/\n:0.75)$) [vertex] {}; 
\node[draw=black,fill=gray,  label={[label distance=-1.5mm]95:$v_2$}] (V_{a3}) at ($(90-3*360/\n:0.75)$) [vertex] {};
 \node[draw=black,fill=gray, label={[label distance=-1mm]above:$v$}] (V_{a4}) at ($(90-4*360/\n:0.75)$) [vertex] {};
    \foreach \x in {0,...,4} {
  
\draw[draw=black] (V_\x) -- (V_{a\x});
    }

\draw[draw=black](V_1)--(V_2);
\draw[draw=black](V_2)--(V_3);
\draw[draw=black](V_3)--(V_4);
\draw[draw=black](V_4)--(V_0);
\draw[draw=black](V_0)--(V_1);
\draw[draw=black](V_{a1})--(V_{a3});
\draw[draw=black](V_{a2})--(V_{a4});
\draw[draw=black](V_{a3})--(V_{a0});
\draw[draw=black](V_{a4})--(V_{a1});
\draw[draw=black](V_{a0})--(V_{a2});
\end{tikzpicture}
&
\begin{tikzpicture}[line width=1pt,vertex/.style={circle,inner sep=0pt,minimum size=0.2cm}] 

    \pgfmathsetmacro{\n}{1}; 
    \pgfmathsetmacro{\m}{\n-1};

        \node[draw=black,fill=gray, label=above:$v_2$] (V_0) at ($(2*\n,2*\n)$) [vertex] {};
       \node[draw=black,fill=gray, label=above:$v_1$] (V_1) at ($(\n,2*\n)$) [vertex] {};
 \node[draw=black,fill=gray, label=below:$v_3$] (V_2) at ($(2*\n,\n)$) [vertex] {};
  \node[draw=black,fill=none, label=above:$z_1$] (V_3) at ($(0,2*\n)$) [vertex] {};
       \node[draw=black,fill=gray, label=below:$z_2$] (V_4) at ($(\n,\n)$) [vertex] {};
  \node[draw=black,fill=none, label=left:$z$] (V_5) at ($(0,\n)$) [vertex] {};
       \node[draw=black,fill=none, label=below:$z_0$] (V_6) at ($(0,0)$) [vertex] {};

\draw[draw=black](V_0)--(V_1);
\draw[draw=black](V_0)--(V_2);
\draw[draw=black](V_1)--(V_3);
\draw[draw=black](V_2)--(V_4);
\draw[draw=black](V_3)--(V_5);
\draw[draw=black](V_5)--(V_6);
\draw[draw=black](V_4)--(V_5);

\end{tikzpicture}
\begin{tikzpicture}[line width=1pt,vertex/.style={circle,inner sep=0pt,minimum size=0.2cm}] 

    \pgfmathsetmacro{\n}{5}; 
    \pgfmathsetmacro{\m}{\n-1};

  \node[draw=black,fill=gray, label=above:$D$] (V_0) at ($(90-0*360/\n:1.5)$) [vertex] {};
 \node[draw=black,fill=none, label=above:$z_0$] (V_1) at ($(90-1*360/\n:1.5)$) [vertex] {};
 \node[draw=black,fill=gray, label=right:$z_2$] (V_2) at ($(90-2*360/\n:1.5)$) [vertex] {}; 
\node[draw=black,fill=gray, label=left:$x'$] (V_3) at ($(90-3*360/\n:1.5)$) [vertex] {};
 \node[draw=black,fill=none, label=above:$z_1$] (V_4) at ($(90-4*360/\n:1.5)$) [vertex] {};
  \node[draw=black,fill=none, label={[label distance=-3mm]87:$z$}] (V_{a0}) at ($(90-0*360/\n:0.75)$) [vertex] {};
 \node[draw=black,fill=gray, label={[label distance=-1mm]above:$v_3$}] (V_{a1}) at ($(90-1*360/\n:0.75)$) [vertex] {};
 \node[draw=black,fill=gray, label={[label distance=-1.5mm]85:$v_2$}] (V_{a2}) at ($(90-2*360/\n:0.75)$) [vertex] {}; 
\node[draw=black,fill=gray,  label={[label distance=-1.5mm]95:$v_1$}] (V_{a3}) at ($(90-3*360/\n:0.75)$) [vertex] {};
 \node[draw=black,fill=gray, label={[label distance=-1mm]above:$v$}] (V_{a4}) at ($(90-4*360/\n:0.75)$) [vertex] {};
    \foreach \x in {0,...,4} {
  
\draw[draw=black] (V_\x) -- (V_{a\x});
    }

\draw[draw=black](V_1)--(V_2);
\draw[draw=black](V_2)--(V_3);
\draw[draw=black](V_3)--(V_4);
\draw[draw=black](V_4)--(V_0);
\draw[draw=black](V_0)--(V_1);
\draw[draw=black](V_{a1})--(V_{a3});
\draw[draw=black](V_{a2})--(V_{a4});
\draw[draw=black](V_{a3})--(V_{a0});
\draw[draw=black](V_{a4})--(V_{a1});
\draw[draw=black](V_{a0})--(V_{a2});
\end{tikzpicture}
\\
$|V(Q)|=6$, $z_2\in N(D)$&$|V(Q)|=6$, $z_0\in N(D)$
\\
\hline
\end{tabular}
\end{table}

\begin{case6}$H'$ is connected and there is some component $D$ of $G-N[v]$ such that $z_0\in N(D)$, $N(D)\cap \{z_1,z_2\}\neq \emptyset$ and $N(D)\cap \{x,y\}\neq \emptyset$\textup{:}\end{case6}
Without loss of generality, $z_1\in N(D)$. Note that $\{z_1,z_0, x'\}\subseteq N_{G'}(D)$, and let $G'':=G'/E(D)$. The diagrams in \cref{2dist32} demonstrate that $\mathcal{P}\subseteq G''$, contradicting (iv).

\begin{table}[!t]
\centering
 \caption{\label{2dist32}}
 \begin{tabular}{|c|c|}
 \hline
\begin{tikzpicture}[line width=1pt,vertex/.style={circle,inner sep=0pt,minimum size=0.2cm}] 

    \pgfmathsetmacro{\n}{1}; 
    \pgfmathsetmacro{\m}{\n-1};

        \node[draw=black,fill=none, label=above:$z_1$] (V_0) at ($(0,2*\n)$) [vertex] {};
       \node[draw=black,fill=gray, label=above:$z_2$] (V_1) at ($(\n,2*\n)$) [vertex] {};
 \node[draw=black,fill=gray, label=left:$z$] (V_2) at ($(0,\n)$) [vertex] {};
  \node[draw=black,fill=none, label=below:$z_0$] (V_3) at ($(0,0)$) [vertex] {};
       \node[draw=black,fill=gray, label=below:$v_1$] (V_4) at ($(\n,0)$) [vertex] {};
  \node[draw=black,fill=gray, label=below:$v_2$] (V_5) at ($(2*\n,0)$) [vertex] {};
       \node[draw=black,fill=gray, label=above:$w$] (V_6) at ($(2*\n,\n)$) [vertex] {};

\draw[draw=black](V_1)--(V_2);
\draw[draw=black](V_0)--(V_1);
\draw[draw=black](V_0)--(V_2);
\draw[draw=black](V_2)--(V_3);
\draw[draw=black](V_3)--(V_4);
\draw[draw=black](V_5)--(V_6);
\draw[draw=black](V_4)--(V_5);

\end{tikzpicture}
\begin{tikzpicture}[line width=1pt,vertex/.style={circle,inner sep=0pt,minimum size=0.2cm}] 

    \pgfmathsetmacro{\n}{5}; 
    \pgfmathsetmacro{\m}{\n-1};

  \node[draw=black,fill=gray, label=above:$D$] (V_0) at ($(90-0*360/\n:1.5)$) [vertex] {};
 \node[draw=black,fill=none, label=above:$x'$] (V_1) at ($(90-1*360/\n:1.5)$) [vertex] {};
 \node[draw=black,fill=gray, label=right:$w$] (V_2) at ($(90-2*360/\n:1.5)$) [vertex] {}; 
\node[draw=black,fill=gray, label=left:$v$] (V_3) at ($(90-3*360/\n:1.5)$) [vertex] {};
 \node[draw=black,fill=none, label=above:$z_1$] (V_4) at ($(90-4*360/\n:1.5)$) [vertex] {};
  \node[draw=black,fill=none, label={[label distance=-3mm]87:$z_0$}] (V_{a0}) at ($(90-0*360/\n:0.75)$) [vertex] {};
 \node[draw=black,fill=gray, label={[label distance=-1mm]above:$z$}] (V_{a1}) at ($(90-1*360/\n:0.75)$) [vertex] {};
 \node[draw=black,fill=gray, label={[label distance=-1.5mm]85:$z_2$}] (V_{a2}) at ($(90-2*360/\n:0.75)$) [vertex] {}; 
\node[draw=black,fill=gray, label={[label distance=-1.5mm]95:$v_2$}] (V_{a3}) at ($(90-3*360/\n:0.75)$) [vertex] {};
 \node[draw=black,fill=gray, label={[label distance=-1mm]above:$v_1$}] (V_{a4}) at ($(90-4*360/\n:0.75)$) [vertex] {};
    \foreach \x in {0,...,4} {
  
\draw[draw=black] (V_\x) -- (V_{a\x});
    }

\draw[draw=black](V_1)--(V_2);
\draw[draw=black](V_2)--(V_3);
\draw[draw=black](V_3)--(V_4);
\draw[draw=black](V_4)--(V_0);
\draw[draw=black](V_0)--(V_1);
\draw[draw=black](V_{a1})--(V_{a3});
\draw[draw=black](V_{a2})--(V_{a4});
\draw[draw=black](V_{a3})--(V_{a0});
\draw[draw=black](V_{a4})--(V_{a1});
\draw[draw=black](V_{a0})--(V_{a2});
\end{tikzpicture}
&
\begin{tikzpicture}[line width=1pt,vertex/.style={circle,inner sep=0pt,minimum size=0.2cm}] 

    \pgfmathsetmacro{\n}{1}; 
    \pgfmathsetmacro{\m}{\n-1};

        \node[draw=black,fill=gray, label=above:$v_1$] (V_0) at ($(\n,2*\n)$) [vertex] {};
       \node[draw=black,fill=none, label=above:$z_1$] (V_1) at ($(0,2*\n)$) [vertex] {};
 \node[draw=black,fill=gray, label=below:$z_2$] (V_2) at ($(\n,\n)$) [vertex] {};
  \node[draw=black,fill=gray, label=left:$z$] (V_3) at ($(0,\n)$) [vertex] {};
       \node[draw=black,fill=none, label=below:$z_0$] (V_4) at ($(0,0)$) [vertex] {};
  \node[draw=black,fill=gray, label=below:$v_2$] (V_5) at ($(\n,0)$) [vertex] {};
       \node[draw=black,fill=gray, label=below:$w$] (V_6) at ($(2*\n,0)$) [vertex] {};

\draw[draw=black](V_0)--(V_1);
\draw[draw=black](V_0)--(V_2);
\draw[draw=black](V_1)--(V_3);
\draw[draw=black](V_2)--(V_3);
\draw[draw=black](V_3)--(V_4);
\draw[draw=black](V_5)--(V_6);
\draw[draw=black](V_4)--(V_5);

\end{tikzpicture}
\begin{tikzpicture}[line width=1pt,vertex/.style={circle,inner sep=0pt,minimum size=0.2cm}] 

    \pgfmathsetmacro{\n}{5}; 
    \pgfmathsetmacro{\m}{\n-1};

  \node[draw=black,fill=gray, label=above:$D$] (V_0) at ($(90-0*360/\n:1.5)$) [vertex] {};
 \node[draw=black,fill=none, label=above:$x'$] (V_1) at ($(90-1*360/\n:1.5)$) [vertex] {};
 \node[draw=black,fill=gray, label=right:$v$] (V_2) at ($(90-2*360/\n:1.5)$) [vertex] {}; 
\node[draw=black,fill=gray, label=left:$w$] (V_3) at ($(90-3*360/\n:1.5)$) [vertex] {};
 \node[draw=black,fill=none, label=above:$z_1$] (V_4) at ($(90-4*360/\n:1.5)$) [vertex] {};
  \node[draw=black,fill=none, label={[label distance=-3mm]87:$z_0$}] (V_{a0}) at ($(90-0*360/\n:0.75)$) [vertex] {};
 \node[draw=black,fill=gray, label={[label distance=-1mm]above:$z$}] (V_{a1}) at ($(90-1*360/\n:0.75)$) [vertex] {};
 \node[draw=black,fill=gray, label={[label distance=-1.5mm]85:$z_2$}] (V_{a2}) at ($(90-2*360/\n:0.75)$) [vertex] {}; 
\node[draw=black,fill=gray, label={[label distance=-1.5mm]95:$v_1$}] (V_{a3}) at ($(90-3*360/\n:0.75)$) [vertex] {};
 \node[draw=black,fill=gray, label={[label distance=-1mm]above:$v_2$}] (V_{a4}) at ($(90-4*360/\n:0.75)$) [vertex] {};
    \foreach \x in {0,...,4} {
  
\draw[draw=black] (V_\x) -- (V_{a\x});
    }

\draw[draw=black](V_1)--(V_2);
\draw[draw=black](V_2)--(V_3);
\draw[draw=black](V_3)--(V_4);
\draw[draw=black](V_4)--(V_0);
\draw[draw=black](V_0)--(V_1);
\draw[draw=black](V_{a1})--(V_{a3});
\draw[draw=black](V_{a2})--(V_{a4});
\draw[draw=black](V_{a3})--(V_{a0});
\draw[draw=black](V_{a4})--(V_{a1});
\draw[draw=black](V_{a0})--(V_{a2});
\end{tikzpicture}
\\
$|V(Q)|=3$&$|V(Q)|=4$
\\
\hline
\begin{tikzpicture}[line width=1pt,vertex/.style={circle,inner sep=0pt,minimum size=0.2cm}] 

    \pgfmathsetmacro{\n}{1}; 
    \pgfmathsetmacro{\m}{\n-1};

        \node[draw=black,fill=gray, label=above:$v_1$] (V_0) at ($(\n,2*\n)$) [vertex] {};
       \node[draw=black,fill=gray, label=above:$v_2$] (V_1) at ($(2*\n,2*\n)$) [vertex] {};
 \node[draw=black,fill=none, label=above:$z_1$] (V_2) at ($(0,2*\n)$) [vertex] {};
  \node[draw=black,fill=gray, label=below:$z_2$] (V_3) at ($(\n,\n)$) [vertex] {};
       \node[draw=black,fill=gray, label=left:$z$] (V_4) at ($(0,\n)$) [vertex] {};
  \node[draw=black,fill=none, label=below:$z_0$] (V_5) at ($(0,0)$) [vertex] {};
       \node[draw=black,fill=gray, label=below:$w$] (V_6) at ($(\n,0)$) [vertex] {};

\draw[draw=black](V_0)--(V_1);
\draw[draw=black](V_0)--(V_2);
\draw[draw=black](V_1)--(V_3);
\draw[draw=black](V_2)--(V_4);
\draw[draw=black](V_3)--(V_4);
\draw[draw=black](V_5)--(V_6);
\draw[draw=black](V_4)--(V_5);

\end{tikzpicture}
\begin{tikzpicture}[line width=1pt,vertex/.style={circle,inner sep=0pt,minimum size=0.2cm}] 

    \pgfmathsetmacro{\n}{5}; 
    \pgfmathsetmacro{\m}{\n-1};

  \node[draw=black,fill=gray, label=above:$D$] (V_0) at ($(90-0*360/\n:1.5)$) [vertex] {};
 \node[draw=black,fill=none, label=above:$x'$] (V_1) at ($(90-1*360/\n:1.5)$) [vertex] {};
 \node[draw=black,fill=gray, label=right:$v_1$] (V_2) at ($(90-2*360/\n:1.5)$) [vertex] {}; 
\node[draw=black,fill=gray, label=left:$w$] (V_3) at ($(90-3*360/\n:1.5)$) [vertex] {};
 \node[draw=black,fill=none, label=above:$z_1$] (V_4) at ($(90-4*360/\n:1.5)$) [vertex] {};
  \node[draw=black,fill=none, label={[label distance=-3mm]87:$z_0$}] (V_{a0}) at ($(90-0*360/\n:0.75)$) [vertex] {};
 \node[draw=black,fill=gray, label={[label distance=-1mm]above:$z$}] (V_{a1}) at ($(90-1*360/\n:0.75)$) [vertex] {};
 \node[draw=black,fill=gray, label={[label distance=-1.5mm]85:$z_2$}] (V_{a2}) at ($(90-2*360/\n:0.75)$) [vertex] {}; 
\node[draw=black,fill=gray, label={[label distance=-1.5mm]95:$v_2$}] (V_{a3}) at ($(90-3*360/\n:0.75)$) [vertex] {};
 \node[draw=black,fill=gray, label={[label distance=-1mm]above:$v$}] (V_{a4}) at ($(90-4*360/\n:0.75)$) [vertex] {};
    \foreach \x in {0,...,4} {
  
\draw[draw=black] (V_\x) -- (V_{a\x});
    }

\draw[draw=black](V_1)--(V_2);
\draw[draw=black](V_2)--(V_3);
\draw[draw=black](V_3)--(V_4);
\draw[draw=black](V_4)--(V_0);
\draw[draw=black](V_0)--(V_1);
\draw[draw=black](V_{a1})--(V_{a3});
\draw[draw=black](V_{a2})--(V_{a4});
\draw[draw=black](V_{a3})--(V_{a0});
\draw[draw=black](V_{a4})--(V_{a1});
\draw[draw=black](V_{a0})--(V_{a2});
\end{tikzpicture}
&
\begin{tikzpicture}[line width=1pt,vertex/.style={circle,inner sep=0pt,minimum size=0.2cm}] 

    \pgfmathsetmacro{\n}{1}; 
    \pgfmathsetmacro{\m}{\n-1};

        \node[draw=black,fill=gray, label=above:$v_2$] (V_0) at ($(2*\n,2*\n)$) [vertex] {};
       \node[draw=black,fill=gray, label=above:$v_1$] (V_1) at ($(\n,2*\n)$) [vertex] {};
 \node[draw=black,fill=gray, label=below:$v_3$] (V_2) at ($(2*\n,\n)$) [vertex] {};
  \node[draw=black,fill=none, label=above:$z_1$] (V_3) at ($(0,2*\n)$) [vertex] {};
       \node[draw=black,fill=gray, label=below:$z_2$] (V_4) at ($(\n,\n)$) [vertex] {};
  \node[draw=black,fill=gray, label=left:$z$] (V_5) at ($(0,\n)$) [vertex] {};
       \node[draw=black,fill=none, label=below:$z_0$] (V_6) at ($(0,0)$) [vertex] {};

\draw[draw=black](V_0)--(V_1);
\draw[draw=black](V_0)--(V_2);
\draw[draw=black](V_1)--(V_3);
\draw[draw=black](V_2)--(V_4);
\draw[draw=black](V_3)--(V_5);
\draw[draw=black](V_5)--(V_6);
\draw[draw=black](V_4)--(V_5);

\end{tikzpicture}
\begin{tikzpicture}[line width=1pt,vertex/.style={circle,inner sep=0pt,minimum size=0.2cm}] 

    \pgfmathsetmacro{\n}{5}; 
    \pgfmathsetmacro{\m}{\n-1};

  \node[draw=black,fill=gray, label=above:$D$] (V_0) at ($(90-0*360/\n:1.5)$) [vertex] {};
 \node[draw=black,fill=none, label=above:$x'$] (V_1) at ($(90-1*360/\n:1.5)$) [vertex] {};
 \node[draw=black,fill=gray, label=right:$v$] (V_2) at ($(90-2*360/\n:1.5)$) [vertex] {}; 
\node[draw=black,fill=gray, label=left:$v_3$] (V_3) at ($(90-3*360/\n:1.5)$) [vertex] {};
 \node[draw=black,fill=none, label=above:$z_1$] (V_4) at ($(90-4*360/\n:1.5)$) [vertex] {};
  \node[draw=black,fill=none, label={[label distance=-3mm]87:$z_0$}] (V_{a0}) at ($(90-0*360/\n:0.75)$) [vertex] {};
 \node[draw=black,fill=gray, label={[label distance=-1mm]above:$z$}] (V_{a1}) at ($(90-1*360/\n:0.75)$) [vertex] {};
 \node[draw=black,fill=gray, label={[label distance=-1.5mm]85:$z_2$}] (V_{a2}) at ($(90-2*360/\n:0.75)$) [vertex] {}; 
\node[draw=black,fill=gray, label={[label distance=-1.5mm]95:$v_1$}] (V_{a3}) at ($(90-3*360/\n:0.75)$) [vertex] {};
 \node[draw=black,fill=gray, label={[label distance=-1mm]above:$v_2$}] (V_{a4}) at ($(90-4*360/\n:0.75)$) [vertex] {};
    \foreach \x in {0,...,4} {
  
\draw[draw=black] (V_\x) -- (V_{a\x});
    }

\draw[draw=black](V_1)--(V_2);
\draw[draw=black](V_2)--(V_3);
\draw[draw=black](V_3)--(V_4);
\draw[draw=black](V_4)--(V_0);
\draw[draw=black](V_0)--(V_1);
\draw[draw=black](V_{a1})--(V_{a3});
\draw[draw=black](V_{a2})--(V_{a4});
\draw[draw=black](V_{a3})--(V_{a0});
\draw[draw=black](V_{a4})--(V_{a1});
\draw[draw=black](V_{a0})--(V_{a2});
\end{tikzpicture}
\\
$|V(Q)|=5$ & $|V(Q)|=6$
\\
\hline
\end{tabular}
\end{table}

\begin{case6}$H'$ is connected and there is no component $D$ of $G-N[v]$ such that either $z\in N(D)$ and $|N(D)\cap N_{H'}(z)|\geq 2$ or $z_0\in N(D)$, $N(D)\cap \{z_1,z_2\}\neq \emptyset$ and $N(D)\cap \{x,y\}\neq \emptyset$\textup{:}\end{case6}

Recall that $|N_{G'}(z)\cap N_{G'}(v)|\leq 4$. Hence $z$ has at least three non-neighbours in $G'[N(v)]$. Since $\overline{G'[N(v)\setminus \{x,y\}]}\subseteq H'$ and $x'$ is dominant in $G'[N_{G'}(v)]$, $z$ is non-adjacent in $G'$ to each vertex in $N_{H'}(z)$. Hence, for every vertex $z'\in N_{H'}[z]$ there is a component $C_{z'}$ of $G-N[v]$ such that $z'\in N(C_{z'})$, since $v\in \mathcal{L}$.

By \cref{separations}, each component $C_{z'}$ of $G-N[v]$ satisfying $|N(C_{z'})|\leq 6$ is $v$-suitable.

Recall that $C'$ is an arbitrary component of $G-N[v]$. We now show that, for some $z'\in N_{H_v}[z]$, $C_{z'}$ is $v$-suitable and $N(C_{z'})\setminus N(C')\neq \emptyset$, as required.

Suppose first that there is no component $D$ of $G-N[v]$ such that $z\in N(D)$ and $|N(D)\cap N_{H'}(z)|\geq 1$. Then $|N(C_z)|\leq 6$. Furthermore, $z\notin N(C_{z_0})$ and either $N(C_{z_0})\cap \{z_1,z_2\}=\emptyset$ or $N(C_{z_0})\cap \{x,y\}=\emptyset$ since Case 3 does not apply, so $|N(C_{z_0})|\leq 6$. Hence, $C_z$ and $C_{z_0}$ are both $v$-suitable. By assumption, $N(C)$ does not contain both $z$ and $z_0$, so $z'\notin N(C)$ for some vertex $z'\in \{z,z_0\}$. Hence, $N(C_{z'})\setminus N(C')\neq \emptyset$, and the claim holds.

 Now assume that there is some component $D$ of $G-N[v]$ such that $z\in N(D)$ and $|N(D)\cap N_{H'}(z)|\geq 1$. Since Case 2 does not apply, $|N(D)\cap N_{H'}(z)|=1$. Let $\{z',z''\}:=N_{H'}(z)\setminus N(D)$. If $|N(C_{z'})|\leq 6$ and $|N(C_{z''})|\leq 6$ (in which case $C_{z'}$ and $C_{z''}$ are both $v$-suitable), and $\{z',z''\}\nsubseteq N(C')$, then the claim holds. So we may assume that either $D':=C'$ satisfies $\{z',z''\}\subseteq N(D')$ or some $D'\in \{C_{z'},C_{z''}\}$ satisfies $|N(D')|\geq 7$. Now $D'$ is distinct from $D$ since $N(D')\cap \{z',z''\}\neq \emptyset$, and $|N_{G'}(D')|\geq 3$ since $|N(D')|\geq 4$ by \cref{connect}. Let $G''$ be obtained from $G'$ by contracting $D$ onto $z$. Then $v\in V_8(G'')$, $|N_{G''}(v)\cap N_{G''}(v')|\geq 5$ for every vertex $v'\in N_{G''}(v)$, and $|N_{G''}(D')|=|N_{G'}(D')|\geq 3$. Furthermore, there is at most one cycle in $\overline{G''[N(v)]}$, namely $Q$, so $\overline{K_3}$ and $\overline{C_4}$ are not both induced subgraphs of $G''[N(v)]$. Hence by \cref{deg8lem}, $\mathcal{P}\subseteq G''$, contradicting (iv). 
\end{proof}

We finally reach the main result of this section.
\begin{lem}\label{9suit} If $v\in V_9(G)\cap \mathcal{L}$ and $C$ is a component of $G-N[v]$, then there is some $v$-suitable subgraph $C'$ such that $N(C')\setminus N(C)\neq \emptyset$.\end{lem}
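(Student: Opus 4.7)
The plan is to apply \cref{dist3} to split into three cases based on the structure of $H_v$ and the neighborhood sizes of components of $G-N[v]$, then dispatch each case with a previously-proved lemma or a direct argument.

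First, \cref{dist3} guarantees that at least one of the following holds: (i) $H_v\cong K_{3,3}\dot{\cup}K_3$, (ii) $|N(C')|=4$ for every component $C'$ of $G-N[v]$, or (iii) there exist $x,y\in V_3(H_v)$ with $\dist_{H_v}(x,y)\geq 3$. In case~(i), \cref{K33} directly produces the required $v$-suitable subgraph. In case~(iii), if additionally some component $C^*$ of $G-N[v]$ has $|N(C^*)|\geq 5$, then \cref{2dist3}, applied with $C^*$ in the role of the distinguished component and our $C$ in the role of the arbitrary component, delivers the conclusion.

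The only remaining case is when every component of $G-N[v]$ has neighborhood of size exactly $4$ in $G$. Here I would argue by contradiction: suppose no $v$-suitable subgraph $C'$ satisfies $N(C')\setminus N(C)\neq\emptyset$. For each component $C'$, \cref{separations} applies with $A=V(C')\cup N(C')$ and $B=V(G)\setminus V(C')$ (the hypotheses hold since $|N(C')|=4\leq 6$, $N(C')\subseteq N(v)$, and $v\in B\setminus A$), yielding a vertex of $\mathcal{L}$ in $V(C')$ and hence showing $C'$ is $v$-suitable. Thus $N(C')\subseteq N(C)$ for every component $C'$, and each vertex $w$ of $W:=N(v)\setminus N(C)$ (which has size $9-4=5$) has no neighbor outside $N[v]$: any such neighbor would lie in a component $C'$ with $w\in N(C')\setminus N(C)$, a contradiction. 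Hence $N[w]\subseteq N[v]$, and since $v\in\mathcal{L}$ forbids any $u$ with $N[u]\subsetneq N[v]$, equality holds and $w$ is dominant in $G[N[v]]$.

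It follows that every non-edge of $G[N[v]]$ lies within the $4$-vertex set $N(C)$, so $G[N[v]]$ is a $10$-vertex graph differing from $K_{10}$ by at most $\binom{4}{2}=6$ edges. The fact established inside the proof of \cref{triangles} — that $\mathcal{P}$ is a subgraph of every $10$-vertex graph with at most six non-edges — then gives $\mathcal{P}\subseteq G[N[v]]\subseteq G$, contradicting~(iv). The principal obstacle is handling this uniformly-small-neighborhood case: the earlier lemmas cover the scenarios with a distant pair of degree-$3$ vertices in $H_v$ or with a large-neighborhood component, and the remaining case requires leveraging $v\in\mathcal{L}$ to force $G[N[v]]$ to be near-complete, after which the random-search fact closes the argument.
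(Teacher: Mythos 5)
Your proof is correct and follows the same overall decomposition as the paper's: reduce via \cref{dist3}, \cref{K33}, and \cref{2dist3} to the case where every component of $G-N[v]$ has neighbourhood of size exactly $4$, and in that case use $v\in\mathcal{L}$ to force five vertices of $N(v)$ to be dominant in $G[N[v]]$. The only substantive difference is in how the final contradiction is extracted. The paper contracts $C$ onto a vertex $x\in N(C)$ and deletes the remaining components, producing a $10$-vertex proper minor with at most $\binom{3}{2}=3$ non-edges (since $x$ also becomes dominant after the contraction), and then contradicts property~(vi) by counting $|E(G')|\geq 42=5\cdot 10-8$. You instead work directly with $G[N[v]]$: it has $10$ vertices, $6$ dominant ones ($v$ and the five in $W$), and hence at most $\binom{4}{2}=6$ non-edges, so the computer-verified fact from inside the proof of \cref{triangles} gives $\mathcal{P}\subseteq G[N[v]]$, contradicting~(iv). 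Both arguments are sound; yours is slightly shorter because it avoids the contraction and edge-count bookkeeping, at the small stylistic cost of invoking an auxiliary fact that the paper establishes only inside the proof of \cref{triangles} rather than as a standalone lemma, and with a slightly looser bound ($6$ non-edges rather than $3$). The paper's version buys the stronger bound for free and stays within the enumerated properties (i)--(vi) of the minimal counterexample, which is a mild tidiness advantage; otherwise the two are interchangeable.
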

\begin{proof}
Suppose first that each component $C'$ of $G-N[v]$ has $|N(C')|= 4$. Then every component of $G-N[v]$ is $v$-suitable by \cref{separations}. Suppose for contradiction that there is no $v$-suitable subgraph $C'$ such that $N(C')\setminus N(C)\neq \emptyset$.  Then $N(C')\subseteq N(C)$ for every component $C'$ of $G-N[v]$, so there are at least five vertices in $N(v)$ with no neighbour outside of $N[v]$. Since $v\in \mathcal{L}$, each of these vertices is dominant in $G[N[v]]$. Let $G'$ be obtained from $G$ by contracting $C$ onto some vertex $x$ of $N(C)$ and then deleting all other components of $G-N[v]$. There are at most three non-dominant vertices in $G'$, so $|E(G')|\geq {10\choose 2}-3=42=5|V(G')|-8$, contradicting (vi).

Suppose instead that there is some component $C'$ of $G-N[v]$ with $|N(C')|\geq 5$. By \cref{dist3,K33}, we may assume that there are two vertices $x$ and $y$ in $V_3(H_v)$ such that $\dist_{H_v}(x,y)\geq 3$. The result then follows directly from \cref{2dist3}.
\end{proof}
\cref{9suit} immediately implies the following corollary, which we use in \cref{endsec}.
 \begin{cor}\label{9ex}For every vertex $v\in V_9(G)$ there is at least one $v$-suitable subgraph.\end{cor}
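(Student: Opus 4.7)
The plan is to derive \cref{9ex} as an immediate consequence of \cref{9suit}, in direct parallel with how \cref{6ex} is obtained from \cref{exactneighbourhood} and \cref{separations}. Given any $v \in V_9(G) \cap \mathcal{L}$ (the hypothesis required by \cref{9suit}), I would first invoke \cref{11vertices} to conclude that $V(G-N[v]) \neq \emptyset$, so $G-N[v]$ has at least one connected component $C$. Applying \cref{9suit} to the pair $(v, C)$ then delivers a $v$-suitable subgraph $C'$ satisfying the stronger conclusion $N(C')\setminus N(C)\neq \emptyset$; discarding this extra property yields the existence of a $v$-suitable subgraph, which is the content of the corollary.

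I do not anticipate any obstacle. The corollary is explicitly noted to follow immediately from \cref{9suit}, and the only ingredient needed beyond the lemma itself is the non-emptiness of $V(G-N[v])$, which is precisely \cref{11vertices}. The statement is naturally read with $v \in V_9(G) \cap \mathcal{L}$, matching both the hypothesis of \cref{9suit} and the way the analogous \cref{6ex} is combined with the membership $v \in \mathcal{L}$ in \cref{endsec}.
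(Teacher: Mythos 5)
Your proposal is correct and matches the paper's (implicit) argument exactly: the paper states only that \cref{9suit} "immediately implies" the corollary, and the step you make explicit—using \cref{11vertices} to guarantee a component $C$ of $G-N[v]$ exists before applying \cref{9suit}—is precisely what is needed to make that implication go through, under the natural reading $v\in V_9(G)\cap\mathcal{L}$.
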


\section{Final Step}\label{endsec}

We now complete the proof sketched in \cref{sketch}.

 
\begin{proof}[Proof of \cref{extremalgraphs}]
Let $G$ be the minimum counterexample defined at the start of \cref{basic}. 
By \cref{triangles,7N,deg8}, $\mathcal{L}\subseteq V_6(G)\cup V_9(G)$, so for every vertex $v\in \mathcal{L}$ there is some $v$-suitable subgraph of $G$ by \cref{6ex,9ex}. Choose $v\in \mathcal{L}$ and $H$ a $v$-suitable subgraph of $G$ so that $|V(H)|$ is minimised. Let $u$ be a vertex of $\mathcal{L}$ in $H$. Since $u\in V(H)$ and $H$ is a component of $G-N[v]$, $u$ is not adjacent to $v$, so $v$ is in some component $C$ of $G-N[u]$. Since $v\in \mathcal{L}$, $C$ is $u$-suitable. By \cref{6suit,9suit}, there is some $u$-suitable subgraph $C'$ of $G$ with $N(C')\setminus N(C)\neq \emptyset$. 

Now $N(C')\subseteq N(u)$, so $v\notin N(C')$. Since $N(C')\setminus N(C)\neq \emptyset$, we have that $C$ and $C'$ are distinct (and thus disjoint), so $v\notin N[C']$ and $C'$ is disjoint from $N[v]$. Hence $G[V(C')\cup (N(C')\setminus N(C))\cup \{u\}]$ is a connected subgraph of $G-N[v]$, and thus a subgraph of $H$. But $u\in V(H)\setminus V(C')$, so $|V(C')|<|V(H)|$, contradicting our choice of $v$ and $H$. This contradiction shows that in fact there are no counterexamples to \cref{extremalgraphs}.
\end{proof}

\subsection*{Acknowledgements}
The authors are grateful to the referees for their helpful comments, and especially for an insightful suggestion that lead to a significant shortening of \cref{9sec}.

\newcommand{\etalchar}[1]{$^{#1}$}

\newpage
\appendix
\section{Appendix}
\label{app}

We now prove the two well known lemmas used in \cref{intro}.

\begin{lem}
\label{newbest}
For every $(t+1)$-connected graph $H$ and every non-negative integer $s<|V(H)|$,
every $(K_s,t)$-cockade is $H$-minor-free.
\end{lem}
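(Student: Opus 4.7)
The plan is to proceed by induction on $|V(G)|$, where $G$ is a $(K_s,t)$-cockade. In the base case $G \cong K_s$, and since $s < |V(H)|$, the graph $H$ has strictly more vertices than $G$, so $H$ cannot be a minor of $G$. For the inductive step, write $G = G_1 \cup G_2$ with both $G_1,G_2$ strictly smaller $(K_s,t)$-cockades and $G_1 \cap G_2 \cong K_t$; by induction, $H$ is a minor of neither $G_1$ nor $G_2$. Suppose for contradiction that $H$ is a minor of $G$, witnessed by pairwise-disjoint connected branch sets $\{B_v : v \in V(H)\}$, and set $S := V(G_1) \cap V(G_2)$, so $|S|=t$ and $S$ is a clique in $G$.

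Partition $V(H)$ into three sets according to where each branch set lies relative to $S$:
\begin{align*}
A_1 &= \{v \in V(H) : B_v \subseteq V(G_1) \setminus S\},\\
A_2 &= \{v \in V(H) : B_v \subseteq V(G_2) \setminus S\},\\
A_{12} &= \{v \in V(H) : B_v \cap S \neq \emptyset\}.
\end{align*}
Note $|A_{12}| \leq |S| = t$. Since $S$ separates $V(G_1) \setminus S$ from $V(G_2) \setminus S$ in $G$, no edge of $G$ can join a branch set in $A_1$ to one in $A_2$; hence every edge of $H$ has both endpoints in $A_1 \cup A_{12}$ or both in $A_2 \cup A_{12}$.

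If both $A_1$ and $A_2$ are non-empty, then $(A_1 \cup A_{12},\, A_2 \cup A_{12})$ is a separation of $H$ of order $|A_{12}| \leq t$, contradicting the assumption that $H$ is $(t+1)$-connected. Otherwise, say $A_1 = \emptyset$ (the other case being symmetric); I will show that $H$ is a minor of $G_2$, contradicting the inductive hypothesis. Define $B_v' := B_v \cap V(G_2)$ for each $v \in V(H)$. The crucial step is to verify that each $B_v'$ is connected in $G_2$: any component $C$ of $G_2[B_v']$ disjoint from $S$ would have to connect to the rest of $B_v$ through $V(G_1) \setminus S$, but then the first edge of such a path would join $V(G_2) \setminus S$ to $V(G_1) \setminus S$, which has no edges in $G$; hence every component of $G_2[B_v']$ meets $S \cap B_v$, and because $S$ is a clique any two such components are joined by an edge in $G_2[B_v']$, forcing $G_2[B_v']$ to be connected.

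It remains to check that adjacencies are preserved, which is where the clique structure of $S$ is used again: for $vw \in E(H)$ with $v,w \in A_{12}$, the clique $S$ supplies an edge between $B_v \cap S$ and $B_w \cap S$ inside $G_2$; if $w \in A_2$ then any witnessing edge $xy$ of $G$ already has $y \in V(G_2) \setminus S$, forcing $x \in V(G_2)$ since no edge crosses $S$; and if $v,w \in A_2$ the edge is already in $G_2$. Thus $\{B_v'\}$ exhibits $H$ as a minor of $G_2$, the desired contradiction. The main (and only real) obstacle in the argument is the connectedness verification for the restricted branch sets in the case $A_1 = \emptyset$; every other step is bookkeeping using the separation $(V(G_1),V(G_2))$ and the definition of connectivity.
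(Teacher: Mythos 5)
Your proof is correct and follows essentially the same strategy as the paper's: induct on the size of the cockade, use the fact that at most $t$ branch sets can meet the separating clique $G_1 \cap G_2$, invoke $(t+1)$-connectivity of $H$ to force all remaining branch sets onto one side, and then use the clique property of $G_1 \cap G_2$ to retract the $H$-model into that side. Your write-up is actually more careful than the paper's at the final step — the paper simply asserts that restricting to one side still yields an $H$-minor, pointing out that no path of $G$ between non-adjacent vertices of $G_1$ is internally disjoint from $G_1$, whereas you explicitly verify that the restricted branch sets $B_v'$ remain connected (every component of $G_2[B_v']$ must meet the clique $S$, and the clique edges merge them) and that all required adjacencies survive the restriction. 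The only cosmetic difference is the induction measure: the paper uses $|V(G)| + |E(G)|$, which handles degenerate cases (e.g.\ $s \leq t$) automatically, while your induction on $|V(G)|$ silently relies on the fact that $G_1$ and $G_2$ have strictly fewer vertices than $G$, which holds whenever $s > t$; in the remaining degenerate cases the only $(K_s,t)$-cockade is $K_s$ itself, so nothing is lost.
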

\begin{proof}
Let $G$ be a $(K_s,t)$-cockade. We proceed by induction on $|V(G)|+|E(G)|$. The claim is trivial if $G=K_s$, since $s<|V(H)|$.
Assume that there are $(K_s,t)$-cockades $G_1$ and $G_2$ distinct from $G$ such that $G_1\cup G_2=G$ and $G_1\cap G_2\cong K_t$. Note that $G_1$ and $G_2$ are proper subgraphs of $G$, and hence by induction are $H$-minor-free.
Suppose for contradiction that $G$ contains an
$H$-minor. Then there is a set of pairwise disjoint connected
subgraphs of $G$ such that if every edge inside one of these subgraphs
is contracted and every vertex not in one of these subgraphs is
deleted, then the graph obtained is a supergraph $H'$ of
$H$ such that $|V(H')|=|V(H)|$. Each of these subgraphs will contract down to a
separate vertex, so we call these subgraphs {\it prevertices}. There
are exactly $t$ vertices in $G_1\cap G_2$, so the set $S$ of prevertices that intersect $G_1\cap
G_2$ has size at most $t$. Since $H$ is $(t+1)$-connected, each prevertex not in $S$ is
in the same connected component of $G- S$. Without loss of generality,
each prevertex not in $S$ is a subgraph of $G_1$. Now, there is no
path of $G$ between two non-adjacent vertices of $G_1$ that is
internally disjoint from $G_1$. Hence, by deleting every vertex of
$G_2\setminus G_1$ and then contracting the remaining edges of the
prevertices and deleting the remaining vertices that are not in any
prevertex, we obtain $H'$, contradicting the assumption the $G_1$ contains no
$H$-minor.
\end{proof}

\begin{proof}[Proof of \cref{gencolour}]
Let $G$ be an $n$-vertex $H$-minor-free graph. We proceed by induction on $n$. 
 The base case with $n\leq 2c-1$ is
trivial. 
For $n\geq 2c$, $|E(G)|<c|V(G)|$, implying $G$ has average degree less than $2c$. Thus
$G$ has a vertex $v$ of degree at most $2c-1$. By induction, $G-v$ is
$2c$-colourable. Some colour is not used on the neighbours of $v$,
which can be assigned to $v$. Hence $G$ is $2c$-colourable. It remains
to prove that $G$ is $(2c-1)$-colourable under the assumption that
$|V(H)|\leq 2c$. First suppose that $\deg(v)\leq 2c-2$. By induction,
$G-v$ is $(2c-1)$-colourable. Some colour is not used on the
neighbours of $v$, which can be assigned to $v$. Hence $G$ is
$(2c-1)$-colourable. Now assume that $\deg(v)=2c-1$. There is some
pair of non-adjacent vertices $x$ and $y$ in $N(v)$, as otherwise $G$
contains $K_{2c}$ and hence $H$ (since $|V(H)|\leq 2c$). Let $G'$ be
the graph obtained from $G$ by contracting the edges $vx$ and $vy$
into a new vertex $z$. By induction, $G'$ is $(2c-1)$-colourable.
Colour each vertex of $G-\{v,x,y\}$ by the colour assigned to the corresponding vertex in
$G'$. Colour $x$ and $y$ by the colour assigned to $z$. Since every
vertex adjacent to $x$ or $y$ in $G-v$ is adjacent to $z$ in $G'$,
this defines a ($2c-1$)-colouring of $G-v$. Now $v$ has $2c-1$
neighbours, two of which have the same colour. Thus there is an unused
colour on the neighbours of $v$, which can be assigned to $v$.
Therefore $G$ is ($2c-1$)-colourable.
\end{proof}

\end{document}